\providecommand{\U}[1]{\protect\rule{.1in}{.1in}}
\newtheorem{theorem}{Theorem}
\newtheorem{conjecture}[theorem]{Conjecture}
\newtheorem{corollary}[theorem]{Corollary}
\newtheorem{definition}[theorem]{Definition}
\newtheorem{lemma}[theorem]{Lemma}
\newtheorem{proposition}[theorem]{Proposition}
\newtheorem{remark}[theorem]{Remark}
\thanks{}
\email{lvitagliano@unisa.it}
\begin{document}
\title{On The Strong Homotopy Lie-Rinehart Algebra of a Foliation}
\author{\textsc{{Luca Vitagliano}}}
\address{DipMat, Universit\`a degli Studi di Salerno, {\& Istituto Nazionale di Fisica
Nucleare, GC Salerno,} Via Ponte don Melillo, 84084 Fisciano (SA), Italy.}

\begin{abstract}
It is well known that a foliation $\mathscr{F}$ of a smooth manifold $M$ gives
rise to a rich cohomological theory, its \emph{characteristic (i.e., leafwise)
cohomology}. Characteristic cohomologies of $\mathscr{F}$ may be interpreted,
to some extent, as functions on the space $\boldsymbol{P}$ of integral
manifolds (of any dimension) of the characteristic distribution $C$ of
$\mathscr{F}$. Similarly, characteristic cohomologies with local coefficients
in the normal bundle $TM/C$ of $\mathscr{F}$ may be interpreted as vector
fields on $\boldsymbol{P}$. In particular, they possess a (graded) Lie bracket
and act on characteristic cohomology $\overline{H}$. In this paper, I discuss
how both the Lie bracket and the action on $\overline{H}$ come from a strong
homotopy structure at the level of cochains. Finally, I show that such a
strong homotopy structure is canonical up to {canonical} isomorphisms.

\end{abstract}
\maketitle

\section{Introduction}

The space of leaves of a foliation is not necessarily a smooth manifold.
However, there exists a formal, cohomological way of defining a differential
calculus on it. Namely, a foliation is a special instance of a \emph{diffiety}%
. A diffiety (or a $D$\emph{-scheme}, in the algebraic geometry language) is a
geometric object formalizing the concept of \emph{partial differential
equation} (PDE). Basically, it is a (possibly infinite dimensional) manifold
$M$ with an involutive distribution $C$ (in the case when $M$ is finite
dimensional, $(M,C)$ is the same as a foliation of $M$). It emerges in the
geometric theory of PDEs as the infinite prolongation of a given system of
differential equations \cite{b...99}. Solutions (initial data, etc.) of a
system of PDEs with $n$ independent variables, correspond bijectively to
$n$-dimensional ($(n-1)$-dimensional, etc.) integral submanifolds of the
corresponding diffiety. Vinogradov developed a theory, {which is known as}
\emph{secondary calculus} \cite{v84,v98,v01}, formalizing in cohomological
terms the idea of a differential calculus on the space of solutions of a given
system of PDEs, or, which is roughly the same, the space of integral manifolds
of a given diffiety $(M,C)$. In other words, \emph{secondary calculus provides
substitutes for vector fields, differential forms, differential operators,
etc., on a (generically) very singular space where these objects cannot be
defined in the usual (smooth) way}.

{Namely, let $(\overline{\Lambda},\overline{d})$ be the differential graded
(DG) commutative agebra of differential forms on $M$ longitudinal along $C$
(i.e., the exterior algebra of the module of sections of the quotient bundle
$T^{\ast}M/C^{\bot}$, see Section \ref{HAF} for details) and $\overline
{\mathfrak{X}}$ the module of vector fields transversal to $C$ (i.e., the
module of sections of the quotient bundle $TM/C$, see again Section \ref{HAF}
for details). The $\overline{\Lambda}$-module, $\overline{\Lambda}%
\otimes\overline{\mathfrak{X}}$ possesses a differential $\overline{d}$ which
makes it a DG module over $(\overline{\Lambda},\overline{d})$. Now,
\emph{secondary} functions are just cohomologies of $(\overline{\Lambda
},\overline{d})$, and secondary vector fields are cohomologies of
$(\overline{\Lambda}\otimes\overline{\mathfrak{X}},\overline{d})$. Similarly,
\emph{secondary} differential forms, etc., are characteristic (i.e.,
longitudinal along $C$) cohomologies of $(M,C)$ (with local coefficients in
transversal differential forms, etc.).} All constructions of standard calculus
on manifolds ({Lie bracket of vector fields,} action of vector fields on
functions, exterior differential, insertion of vector fields in differential
forms, Lie derivative of differential forms along vector fields, etc.) have a
secondary analogue, i.e., a formal, cohomological analogue within secondary
calculus. {In the trivial case when $\dim C=0$, secondary calculus reduces to
standard calculus on the manifold $M$} (see the first part of \cite{v09} for a
compact review of \emph{secondary Cartan calculus}).

Since secondary constructions are algebraic structures in cohomology, it is
natural to wonder whether they come from algebraic structures
\textquotedblleft\emph{up to homotopy}\textquotedblright at the level of cochains.

The present paper is the first in a series aiming at exploring the following

\begin{conjecture}
\label{Conj} All secondary constructions come from suitable homotopy
structures at the level of (characteristic) cochains.
\end{conjecture}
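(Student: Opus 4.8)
The plan is to prove the conjecture in the case singled out in the abstract, leaving the general statement as the programme of the series: I will show that Vinogradov's \emph{secondary Lie--Rinehart structure} --- the graded Lie bracket on secondary vector fields $H^{\bullet}(\overline{\Lambda}\otimes\overline{\mathfrak{X}},\overline{d})$ together with its action on secondary functions $\overline{H}$ --- is induced in cohomology by a \emph{strong homotopy Lie--Rinehart algebra} (in the sense of Kjeseth) carried directly by the characteristic cochains $(\overline{\Lambda},\overline{\Lambda}\otimes\overline{\mathfrak{X}})$, and that this cochain-level structure is independent, up to a canonical $L_{\infty}$-isomorphism, of the one choice entering its construction.

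First I isolate the data that require no choice: the honest Lie--Rinehart algebra $(C^{\infty}(M),\mathfrak{X}(M))$ --- i.e.\ the Cartan calculus of $M$ --- the Lie subalgebroid $C\subseteq TM$ defining $\mathscr{F}$, its leafwise de Rham algebra $(\overline{\Lambda},\overline{d})$, and the flat Bott $C$-connection on $\nu:=TM/C=\overline{\mathfrak{X}}$, which exhibits $(\overline{\Lambda}\otimes\overline{\mathfrak{X}},\overline{d})$ as the Chevalley--Eilenberg complex of $C$ with values in $\nu$. The one non-canonical ingredient is a splitting $\sigma\colon\nu\to TM$ of $0\to C\to TM\to\nu\to 0$, equivalently a complement $N$ with $TM=C\oplus N$. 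Given $\sigma$ I write down the candidate structure maps: the unary bracket $\ell_{1}=\overline{d}$; the binary bracket $\ell_{2}$, the unique leafwise biderivation extending the \emph{transverse bracket} $\ell_{2}(n_{1},n_{2})=\mathrm{pr}_{N}[\sigma n_{1},\sigma n_{2}]$ on $\overline{\mathfrak{X}}\subseteq(\overline{\Lambda}\otimes\overline{\mathfrak{X}})^{0}$; the degree-zero action $\rho\colon\overline{\Lambda}\otimes\overline{\mathfrak{X}}\to\mathrm{Der}(\overline{\Lambda})$ induced via $\sigma$ by $n\mapsto\mathrm{pr}_{C}[\sigma n,-]$ on $\Gamma(C)$; and, crucially, the higher maps $\ell_{k},\rho_{k}$ ($k\ge 3$) seeded by the curvature $F_{\sigma}(n_{1},n_{2})=\mathrm{pr}_{C}[\sigma n_{1},\sigma n_{2}]\in\Gamma(C)$ of the splitting fed into the Bott connection, e.g.\ $\ell_{3}(n_{1},n_{2},n_{3})=\pm\nabla^{\mathrm{Bott}}_{F_{\sigma}(n_{1},n_{2})}n_{3}\pm(\text{cyclic})$.

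The technical heart is to verify the full tower of strong-homotopy-Lie--Rinehart identities for this data: the generalized Jacobi ($L_{\infty}$) relations among the $\ell_{k}$, the homotopy-Leibniz relations saying $\rho$ is an $L_{\infty}$-morphism to $\mathrm{Der}(\overline{\Lambda})$ up to homotopy, and the $\overline{\Lambda}$-multilinearity of the brackets modulo leafwise Lie derivatives. Rather than checking these one relation at a time, I would encode the whole package as a single degree-$+1$ operator $D$ on the relevant cofree cocommutative construction over $\overline{\Lambda}$ and prove $D^{2}=0$ by \emph{transporting} through the decomposition $TM=C\oplus N$ the single identity $d_{TM}^{2}=0$ --- the Jacobi identity of $(\mathfrak{X}(M),[\cdot,\cdot])$ read as the Lie-algebroid differential of $TM$. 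In this picture every $L_{\infty}$-defect is, by construction, a piece of $d_{TM}^{2}=0$ that the projections $\mathrm{pr}_{C},\mathrm{pr}_{N}$ fail to commute with, which is precisely what the curvature-built higher maps are designed to cancel; the higher $\ell_{k},\rho_{k}$ are then read off as the remaining homogeneous components of $D$, so their mutual consistency is automatic. I anticipate two difficulties: the Koszul-sign and combinatorial bookkeeping needed to match the $L_{\infty}$ conventions with the leafwise Cartan calculus; and --- the genuine mathematical point --- showing that the transported operator is well defined, i.e.\ that the higher maps organize into a closed, recursively determined hierarchy expressed through $F_{\sigma}$ and iterated leafwise covariant derivatives (in general one should not expect truncation, since the underlying complex has length $\mathrm{rk}(C)$).

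Once $D^{2}=0$ is established, passing to $H(\overline{d})$ is immediate and settles the conjecture in this case: on $H^{\bullet}$ the class $[\ell_{2}]$ recovers the secondary bracket --- on $H^{0}=\{\text{projectable vector fields mod }C\}$ it is the transverse Lie bracket --- and $[\rho]$ recovers the secondary action on $\overline{H}$, the curvature corrections being $\overline{d}$-trivial on cocycles, so the cohomological shadow is the intended one. For the closing canonicity statement I would compare two splittings $\sigma,\sigma'$ through $\phi:=\sigma-\sigma'\colon\nu\to C$ and produce an explicit $L_{\infty}$-isomorphism between the two strong-homotopy-Lie--Rinehart structures whose leading term is the identity and whose higher components are polynomial in $\phi$ and $\overline{d}\phi$; since any two such isomorphisms are themselves homotopic, the structure is canonical up to canonical isomorphism, as claimed.
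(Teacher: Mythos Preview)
Your plan is sound and, at its core, coincides with the paper's: both derive the entire SH Lie--Rinehart structure from the single identity $d^{2}=0$ on $\Lambda(M)$, transported through a choice of complement to $C$, and both establish canonicity by comparing two complements through the difference tensor $\phi=\sigma-\sigma'$ (the paper's $\Delta=P^{C}-{}'P^{C}$). Two points of comparison are worth recording.

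First, you work on the coalgebra side (a cofree cocommutative construction in Kjeseth's spirit), whereas the paper works on the dual, algebra side: it identifies $\Lambda(M)\simeq\overline{\Lambda}\otimes C\Lambda\simeq\mathrm{Sym}_{\overline{\Lambda}}(\mathcal{Q},\overline{\Lambda})$ with $\mathcal{Q}=\overline{\Lambda}\otimes\overline{\mathfrak{X}}[1]$, so that the de~Rham differential becomes a homological derivation of the would-be Chevalley--Eilenberg algebra of $\mathcal{Q}$, and the brackets and anchors are then \emph{read off} from the general dictionary between multi-differential algebras and $LR_{\infty}[1]$-structures. The payoff of the algebra route is that the structure maps come out in closed Fr\"olicher--Nijenhuis form (e.g.\ $\{Z_{1},Z_{2}\}=-(-)^{Z_{1}}[\![Z_{1},Z_{2}]\!]+[[R,Z_{1}]_{\mathrm{nr}},Z_{2}]_{\mathrm{nr}}$), and the canonical isomorphism between two splittings is simply the composite $\overline{\Lambda}\otimes C\Lambda\to\Lambda(M)\to\overline{\Lambda}{}'\otimes C\Lambda$, with components $\Phi_{k}$ given by iterated insertions of $\Delta$.

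Second, your anticipated difficulty about a non-truncating hierarchy does not materialise. The bidegree decomposition of $d$ with respect to $\Lambda(M)=\overline{\Lambda}\otimes C\Lambda$ stops at $d_{2}=-i_{R}$, because the curvature $R$ lies in $C\Lambda^{2}\otimes C\mathfrak{X}$; there is no $d_{k}$ for $k>2$. Consequently the $LR_{\infty}[1]$-algebra has only unary, binary and ternary brackets (and anchors) nonzero, the ternary ones being $\{Z_{1},Z_{2},Z_{3}\}=-[[[R,Z_{1}]_{\mathrm{nr}},Z_{2}]_{\mathrm{nr}},Z_{3}]_{\mathrm{nr}}$ and $\{Z_{1},Z_{2}\,|\,\lambda\}=-i_{[[R,Z_{1}]_{\mathrm{nr}},Z_{2}]_{\mathrm{nr}}}\lambda$. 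So the ``closed, recursively determined hierarchy'' you worry about closes at the first step, and the structure is precisely Huebschmann's quasi-Lie--Rinehart algebra (i.e.\ $d_{k}=0$ for $k>2$).
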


A few instances motivating Conjecture \ref{Conj} are scattered through the
literature. Namely, Barnich, Fulp, Lada, and Stasheff \cite{b...98} proved
that a (secondary) Poisson bracket on the space of histories of a field theory
(which is nothing but the space of solutions of the trivial PDE $0=0$, whose
underlying diffiety is a \textquotedblleft free\textquotedblright\ one, i.e.,
an infinite jet space) comes from a (non canonical) $L_{\infty}$-structure at
the level of horizontal forms. Similarly, Oh and Park \cite{op05} showed that
the Poisson bracket on characteristic cohomologies of the degeneracy
distribution of a presymplectic form comes from an $L_{\infty}$-structure on
longitudinal forms. Finally, C.{} Rogers \cite{r10} showed that $L_{\infty}$
algebras naturally appear in multisymplectic geometry. More precisely, he
proved that Hamiltonian forms in multisymplectic geometry build up an
$L_{\infty}$-algebra (see also \cite{z10} for a generalization of the results
of Rogers to field theories with non-holonomic constraints). In fact, Rogers'
$L_{\infty}$-algebra induces the standard Lie algebra of conservation laws in
the characteristic cohomology of the \emph{covariant phase space} of a
multisymplectic field theory (see \cite{v09}). In its turn, such a Lie algebra
can be understood as a secondary analogue of the Lie algebra of first
integrals in Hamiltonian mechanics.

In this paper, I show that the Lie-Rinehart algebra of secondary vector fields
comes from a strong homotopy (SH) Lie-Rinehart algebra structure on the
corresponding cochains, i.e., transversal vector field valued longitudinal
forms. To keep things simpler, I assume $M$ to be finite dimensional. In fact,
all the proofs are basically algebraic and immediately generalize to the
infinite dimensional case.

I have to mention here that three papers already appeared containing results
closely related to results in this paper. Firstly, in \cite{h05} Huebschmann
shows that higher homotopies naturally emerge in the theory of characteristic
cohomologies of foliations. Specifically, he proposes a definition of
\textquotedblleft homotopy Lie-Rinehart algebra\textquotedblright, which he
calls \emph{quasi-Lie-Rinehart algebra}, and proves (among numerous other
things) that a (split) Lie subalgebroid in a Lie algebroid gives rise to a
quasi-Lie-Rinehart algebra. In fact, the homotopy Lie-Rinehart algebra
presented in this paper coincides with Huebschmann's quasi-Lie-Rinehart
algebra in the case of the Lie subalgebroid defined by a foliation. Indeed, a
quasi-Lie Rinehart algebra is a special type of SH Lie-Rinehart algebra, but
this is not explicitly stated by Huebschmann in his paper. In the subsequent
sections, I discuss the precise relation between quasi-Lie-Rinehart algebras
and SH Lie-Rinehart algebras, and clarify the novelty of the present paper
with respect to \cite{h05} (see Remark \ref{Hue2} of Section \ref{SHDGLR} and
last paragraph of Section \ref{HAF}). Secondly, very recently, Chen,
Sti\'{e}non, and Xu \cite{csx12} showed that the Lie bracket in the cohomology
of a Lie subalgebroid with values in the quotient module comes from a homotopy
Leibniz algebra at the level of cochains (see the end of Section \ref{HAF} for
a comparison between their results and results in this paper). Thirdly, when I
was preparing a revised version of my manuscript arXiv:1204.2467v1,
there appeared on the arXiv itself the paper \cite{j12} by Ji. Ji proves that
a (split) Lie subalgebroid in a Lie algebroid gives rise to an $L_{\infty}%
$-algebra. In fact, again in the case of the Lie subalgebroid defined by a
foliation, Ji's $L_{\infty}$-algebra can be obtained by the SH Lie-Rinehart
algebra of this paper forgetting about the anchors (see the first appendix for
the relation between Ji's construction and the construction in this paper).

The paper is organized as follows. It is divided in three parts. The first one
contains algebraic foundations and it consists of three sections. In Section
\ref{SHS}, I recall the definitions of (and fix the conventions about) SH
algebras (including their morphisms), SH modules and SH Lie-Rinehart algebras
(which, to my knowledge, have been defined for the first time {by Kjeseth} in
\cite{k01}). In Section \ref{SHDGLR}, I present in details the DG algebra
approach to SH Lie-Rinehart algebras which is dual to the coalgebra approach
of Kjeseth \cite{k01} (computational details, are postponed to Appendix
\ref{Appendix}). The algebra approach is, in my opinion, more suitable for the aims of this paper. Indeed, {the
existence of the SH Lie-Rinehart algebra of a foliation is an immediate
consequence of the existence of the exterior differential in the algebra of
differential forms on the underlying manifold (see Section \ref{SHLRF})}.

In Section \ref{MSHLR}, I use the DG algebra approach to discuss morphisms of
SH Lie-Rinehart algebras, over the same DG algebra. The second part of the
paper contains the geometric applications and it consists of five sections.
Section \ref{FVVF} reviews fundamentals of the Fr\"{o}licher-Nijenhuis
calculus on form-valued vector fields (more often named vector-valued
differential forms \cite{fn56}). The SH Lie-Rinehart algebra of a foliation
has a nice description in terms of Fr\"{o}licher-Nijenhuis calculus. In
Section \ref{HAF}, I briefly review the characteristic cohomology of a smooth
foliation, and state the theorem about the occurrence of a SH Lie-Rinehart
algebra in the theory of foliations. Section \ref{GSF} contains more
preliminaries on geometric structures over a foliated manifold. In Section
\ref{SHLRF}, I present the SH Lie-Rinehart algebra of a foliation and describe
it in terms of Fr\"{o}licher-Nijenhuis calculus, thus answering a question
posed by Huebschmann after a remark by Michor (see Remark 4.16 of \cite{h05}).

In Section \ref{CS}, I remark that the SH Lie-Rinehart algebra of a foliation
is independent of the complementary distribution appearing in the definition,
up to isomorphisms, and describe a canonical isomorphism between the SH
Lie-Rinehart algebras determined by different complementary distributions. In
Section \ref{SHLRPSF}, as a further example of the emergence of SH structures
in secondary calculus, I consider the integral foliation of the degeneracy
distribution of a presymplectic form and prove that there exists a canonical
morphism from the SH algebra of Oh and Park to the SH Lie-Rinehart algebra of
the foliation.

The third part of the paper contains the appendixes. The first
appendix contains some computational details omitted in Sections \ref{SHDGLR}
and \ref{MSHLR}. In the second appendix, I show that the higher brackets in a
SH Lie-Rinehart algebra are actually derived brackets, according to the
construction of T.{} Voronov \cite{v05}. In the third appendix, I briefly
present an alternative derivation of the SH Lie-Rinehart algebra of a
foliation which does not apply to the general case of a Lie subalgebroid.
Finally, in the last appendix, I present an alternative formulas for the
binary operations in the SH Lie-Rinehart algebra of a foliation which could be
useful for some purposes.

\subsection{Conventions and notations}

I will adopt the following notations and conventions throughout the paper. Let
$k_{1},\ldots,k_{\ell}$ be positive integers. I denote by $S_{k_{1}%
,\ldots,k_{\ell}}$ the set of $(k_{1},\ldots,k_{\ell})$\emph{-unshuffles},
i.e., permutations $\sigma$ of $\{1,\ldots,k_{1}+\cdots+k_{\ell}\}$ such that
\[
\sigma(k_{1}+\cdots+k_{i-1}+1)<\cdots<\sigma(k_{1}+\cdots+k_{i-1}+k_{i}),\quad
i=1,\ldots,\ell.
\]

If $S$ is a set, I denote
\[
S^{\times k}:={}\underset{k\text{ times}}{\underbrace{S\times\cdots\times S}%
},
\]
and the element $(s,\ldots,s)\in S^{\times k}$ of the diagonal will be simply
denoted by $s^{k}$, $s\in S$.

The degree of a homogeneous element $v$ in a graded vector space will be
denoted by $\bar{v}$. However, when it appears in the exponent of a sign
$(-)$, I will always omit the overbar, and write, for instance, $(-)^{v}$
instead of $(-)^{\bar{v}}$.

Every vector space will be over a field $K$ of zero characteristic, which will
actually be $\mathbb{R}$ in Part \ref{P2} (and Appendixes \ref{Ap3} and
\ref{Ap4}).

If $V=\bigoplus_{i}V^{i}$ is a graded vector space, I denote by
$V[1]=\bigoplus_{i}V[1]^{i}$ (resp., $V[-1]=\bigoplus_{i}V[-1]^{i}$) its
suspension (resp., de-suspension), i.e., the graded vector space defined by
putting $V[1]^{i}=V^{i+1}$ (resp., $V[-1]^{i}=V^{i-1}$). Let $V_{1}%
,\ldots,V_{n}$ be graded vector spaces,
\[
\boldsymbol{v}=(v_{1},\ldots,v_{n})\in V_{1}\times\cdots\times V_{n},
\]
and $\sigma$ a permutation of $\{1,\ldots,n\}$. I denote by $\alpha
(\sigma,\boldsymbol{v})$ (resp., $\chi(\sigma,\boldsymbol{v})$) the sign
implicitly defined by
\[
v_{\sigma(1)}\odot\cdots\odot v_{\sigma(n)}=\alpha(\sigma,\boldsymbol{v}%
)\,v_{1}\odot\cdots\odot v_{n}\quad\text{(resp., }v_{\sigma(1)}\wedge
\cdots\wedge v_{\sigma(n)}=\chi(\sigma,\boldsymbol{v})\,v_{1}\wedge
\cdots\wedge v_{n}\text{)}%
\]
where $\odot$ (resp., $\wedge$) is the graded symmetric (resp., graded
skew-symmetric) product in the symmetric (resp., exterior) algebra of
$V_{1}\oplus\cdots\oplus V_{n}$.

Let $V,W$ be graded vector spaces, $\Phi:V^{\times k}\longrightarrow W$ a
graded, multilinear map, $\boldsymbol{v}=(v_{1},\ldots,v_{k})\in V^{\times k}%
$, and $\sigma\in S_{k}$. I call $\alpha(\sigma,\boldsymbol{v})\Phi
(v_{\sigma(1)},\cdots,v_{\sigma(k)})$ a \emph{Koszul signed permutation} of
$(v_{1},\ldots,v_{k})$ (in $\Phi(v_{1},\ldots,v_{k})$).

Now, let $M$ be a smooth manifold. I denote by $C^{\infty}(M)$ the real
algebra of smooth functions on $M$, by $\mathfrak{X}(M)$ the Lie-Rinehart
algebra of vector fields on $M$, and by $\Lambda(M)$ the DG algebra of
differential forms on $M$. Elements in $\mathfrak{X}(M)$ are always understood
as derivations of $C^{\infty}(M)$. Homogeneous elements in $\Lambda(M)$ are
always understood as $C^{\infty}(M)$-valued, skew-symmetric, multilinear maps
on $\mathfrak{X}(M)$. I simply denote by $\omega_{1}\omega_{2}$ (instead of
$\omega_{1}\wedge\omega_{2}$) the (wedge) product of differential forms
$\omega_{1},\omega_{2}$. I denote by $d:\Lambda(M)\longrightarrow\Lambda(M)$
the exterior differential. Every tensor product will be over $C^{\infty}(M)$,
if not explicitly stated otherwise, and will be simply denoted by $\otimes$.
Finally, I adopt the Einstein summation convention.

\part{Algebraic Foundations}

\section{Strong Homotopy Structures\label{SHS}}

Let $(V,\delta)$ be a complex of vector spaces and $\mathscr{A}$ be any kind
of algebraic structure (associative algebra, Lie algebra, module, etc.).
Roughly speaking, a homotopy $\mathscr{A}$-structure on $(V,\delta)$ is an
algebraic structure on $V$ which is of the kind $\mathscr{A}$ only up to
$\delta$-homotopies, and a \emph{strong homotopy (SH)} $\mathscr{A}$%
\emph{-structure} is a homotopy structure possessing a full system of
(coherent) \emph{higher homotopies}. In this paper, I will basically deal with
three kinds of SH structures, namely SH {Lie} algebras (also named $L_{\infty
}$-algebras), SH {Lie} modules (also named $L_{\infty}$-modules), and SH
Lie-Rinehart algebras (that, actually, encompass the latter). For them I
provide detailed definitions below.

Let $L$ be a graded vector space, and let $\mathscr{L}=\{[{}\cdot{},\cdots
,{}\cdot{}]_{k},\ k\in\mathbb{N}\}$ be a family of $k$-ary, multilinear,
homogeneous of degree $2-k$ operations
\[
\lbrack{}\cdot{},\cdots,{}\cdot{}]_{k}:{}L^{\times k}\longrightarrow L,\quad
k\in\mathbb{N}.
\]
If the $[{}\cdot{},\cdots,{}\cdot{}]_{k}$'s are graded skew-symmetric, then
the $k$-\emph{th Jacobiator of }$\mathscr{L}$ is, by definition, the
multilinear, homogeneous of degree $3-k$ map\emph{ }%
\[
J^{k}:{}L^{\times k}\longrightarrow L,
\]
defined by
\[
J^{k}(v_{1},\ldots,v_{k}):=\sum_{i+j=k}(-)^{ij}\sum_{\sigma\in S_{i,j}}%
\chi(\sigma,\boldsymbol{v})\,[[v_{\sigma(1)},\ldots,v_{\sigma(i)}%
],v_{\sigma(i+1)},\ldots,v_{\sigma(i+j)}],
\]
$\boldsymbol{v}=(v_{1},\ldots,v_{k})\in L^{\times k}$.

I will often omit the subscript $k$ in $[{}\cdot{}{},\cdots,{}\cdot{}]_{k}$
when it is clear from the context, and I will do the same for other $k$-ary
operations in the paper without further comments.

\begin{definition}
\label{Def1}An $L_{\infty}$\emph{-algebra} is a pair $(L,\mathscr{L})$, where
$L$ is a graded vector space, and $\mathscr{L}=\{[{}\cdot{},\cdots,{}\cdot
{}]_{k},\ k\in\mathbb{N}\}$ is a family of $k$-ary, multilinear, homogeneous
of degree $2-k$ operations
\[
\lbrack{}\cdot{},\cdots,{}\cdot{}]_{k}:{}L^{\times k}\longrightarrow L,\quad
k\in\mathbb{N},
\]
such that

\begin{enumerate}
\item $[{}\cdot{},\cdots,{}\cdot{}]_{k}$ is graded skew-symmetric, and

\item the $k$-th Jacobiator of $\mathscr{L}$ vanishes identically,
\end{enumerate}

for all $k\in\mathbb{N}$, (in particular, $(L,[{}\cdot{}]_{1})$ is a complex).
\end{definition}

Notice that if $L$ is concentrated in degree $0$, then an $L_{\infty}$-algebra
structure on $L$ is simply a Lie algebra structure for degree reasons.
Similarly, if $[{}\cdot{},\cdots,{}\cdot{}]_{k}=0$ for all $k>2$, then
$(L,\mathscr{L})$ is a DG Lie algebra.

Now, let $(L,\mathscr{L})$, $\mathscr{L}=\{[{}\cdot{},\cdots,{}\cdot{}%
]_{k},\ k\in\mathbb{N}\}$, be an $L_{\infty}$-algebra, $M$ a graded vector
space, and let $\mathscr{M}=\{[{}\cdot{},\cdots,{}\cdot|{}\cdot{}{}%
]_{k},\ k\in\mathbb{N}\}$ be a family of $k$-ary, multilinear, homogeneous of
degree $2-k$ operations,
\[
\lbrack{}\cdot{},\cdots,{}\cdot|{}\cdot{}{}]_{k}:{}L^{\times(k-1)}\times
M\longrightarrow M,\quad k\in\mathbb{N}.
\]
If the $[{}\cdot{},\cdots,{}\cdot|{}\cdot{}{}]_{k}$'s are graded
skew-symmetric in the first $k-1$ entries, then the $k$\emph{-th Jacobiator
of }$\mathscr{M}$ is, by definition, the multilinear, homogeneous of degree
$3-k$, map
\[
J^{k}:{}L^{\times(k-1)}\times M\longrightarrow M,
\]
defined by
\[
J^{k}(v_{1},\ldots,v_{k-1}|m):=\sum_{i+j=k}(-)^{ij}\sum_{\sigma\in S_{i,j}%
}\chi(\sigma,\boldsymbol{b})\,[[b_{\sigma(1)},\ldots,b_{\sigma(i)}]^{\oplus
},b_{\sigma(i+1)},\ldots,b_{\sigma(i+j)}]^{\oplus},
\]
$\boldsymbol{b}=(v_{1},\ldots,v_{k-1},m)\in L^{\times(k-1)}\times M$, where
the $[{}\cdot{}{},\cdots,{}\cdot{}]^{\oplus}$'s are new operations
\[
\lbrack{}\cdot{}{},\cdots,{}\cdot{}]_{k}^{\oplus}:{}(L\oplus M)^{\times
k}\longrightarrow L\oplus M,\quad k\in\mathbb{N},
\]
defined by extending the $[{}\cdot{},\cdots,{}\cdot{}]_{k}$'s and the
$[{}\cdot{},\cdots,{}\cdot|{}\cdot{}{}]_{k}$'s by multilinearity,
skew-symmetry, and the condition that the result is zero if more than one
entry is from $M$.

\begin{definition}
\label{Def2}An $L_{\infty}$\emph{-module} over the $L_{\infty}$-algebra
$(L,\mathscr{L})$, $\mathscr{L}=\{[{}\cdot{},\cdots,{}\cdot{}]_{k}%
,\ k\in\mathbb{N}\}$, is a pair $(M,\mathscr{M})$, where $M$ is a graded
vector space, and $\mathscr{M}=\{[{}\cdot{},\cdots,{}\cdot|{}\cdot{}{}%
]_{k},\ k\in\mathbb{N}\}$ is a family of $k$-ary, multilinear, homogeneous of
degree $2-k$ operations,
\[
\lbrack{}\cdot{},\cdots,{}\cdot|{}\cdot{}{}]_{k}:{}L^{\times(k-1)}\times
M\longrightarrow M,\quad k\in\mathbb{N},
\]
such that

\begin{enumerate}
\item $[{}\cdot{},\cdots,{}\cdot|{}\cdot{}{}]_{k}$ is graded skew-symmetric in
the first $k-1$ entries, and

\item the $k$-th Jacobiator of $\mathscr{M}$ vanishes identically,

for all $k\in\mathbb{N}$ (in particular, $(M,[{}\cdot{}]_{1})$ is a complex).
\end{enumerate}
\end{definition}

If both $L$ and $M$ are concentrated in degree $0$, then an $L_{\infty}%
$-module structure on $M$ over $L$ is simply a Lie module structure over the
Lie algebra $L$. Similarly, if $[{}\cdot{},\cdots,{}\cdot{}]_{k}=0$ and
$[{}\cdot{},\cdots,{}\cdot{}|{}\cdot{}]_{k}=0$ for all $k>2$, then
$(M,\mathscr{M})$ is a DG Lie module over the DG Lie algebra $L$.

The sign conventions in Definitions \ref{Def1} and \ref{Def2} are the same as
in \cite{ls93,lm95}. However, in this paper, I will mainly use a different
sign convention \cite{v05}. Namely, I will deal with what are often called
$L_{\infty}[1]$\emph{-algebras} and $L_{\infty}[1]$\emph{-modules}, whose
definitions I recall now.

Let $L$ be a graded vector space, and let $\mathscr{L}=\{\{{}\cdot{}{}%
,\cdots,{}\cdot{}\}_{k},\ k\in\mathbb{N}\}$ be a family of $k$-ary,
multilinear, homogeneous of degree $1$ operations
\[
\{{}\cdot{}{},\cdots,{}\cdot{}\}_{k}:{}L^{\times k}\longrightarrow L,\quad
k\in\mathbb{N}.
\]
If the $\{{}\cdot{}{},\cdots,{}\cdot{}\}_{k}$'s are graded symmetric, then the
$k$-\emph{th Jacobiator of }$\mathscr{L}$ is, by definition, the multilinear,
homogeneous of degree $2$ map\emph{ }%
\[
J^{k}:{}L^{\times k}\longrightarrow L,
\]
defined by
\[
J^{k}(v_{1},\ldots,v_{k}):=\sum_{i+j=k}\sum_{\sigma\in S_{i,j}}\alpha
(\sigma,\boldsymbol{v})\,\{\{v_{\sigma(1)},\ldots,v_{\sigma(i)}\},v_{\sigma
(i+1)},\ldots,v_{\sigma(i+j)}\},
\]
$\boldsymbol{v}=(v_{1},\ldots,v_{k})\in L^{\times k}$.

\begin{definition}
An $L_{\infty}[1]$\emph{-algebra} is a pair $(L,\mathscr{L})$, where $L$ is a
graded vector space, and $\mathscr{L}=\{\{{}\cdot{}{},\cdots,{}\cdot{}%
\}_{k},\ k\in\mathbb{N}\}$ is a family of $k$-ary, multilinear, homogeneous of
degree $1$ operations,
\[
\{{}\cdot{}{},\cdots,{}\cdot{}\}_{k}:{}L^{k}\longrightarrow L,\quad
k\in\mathbb{N},
\]
such that

\begin{enumerate}
\item $\{{}\cdot{}{},\cdots,{}\cdot{}\}_{k}$ is graded symmetric,

\item the $k$-th Jacobiator of $\mathscr{L}$ vanishes identically,
\end{enumerate}

for all $k\in\mathbb{N}$.
\end{definition}

There is a one-to-one correspondence between $L_{\infty}$-algebra structures
$\{[{}\cdot{}{},\cdots,{}\cdot{}]_{k},\ k\in\mathbb{N}\}$ in a graded vector
space $L$, and $L_{\infty}[1]$-algebra structures $\{\{{}\cdot{}{},\cdots
,{}\cdot{}\}_{k},\ k\in\mathbb{N}\}$ in $L[1]$, given by
\[
\{{}v_{1}{},\ldots,{}v_{k}\}=(-)^{(k-1)v_{1}+(k-2)v_{2}+\cdots+v_{k-1}}%
[{}v_{1}{},\cdots,{}v_{k}],\quad v_{1},\ldots,v_{k}\in L,\quad k\in
\mathbb{N}.
\]

$L_{\infty}[1]$-algebras build up a category whose morphisms are defined as follows.

Let $(L,\{\{{}\cdot{}{},\cdots,{}\cdot{}\}_{k},\ k\in\mathbb{N}\})$ and
$(L^{\prime},\{\{{}\cdot{}{},\cdots,{}\cdot{}\}_{k}^{\prime},\ k\in
\mathbb{N}\})$ be $L_{\infty}[1]$-algebras, and let $f=\{f_{k},\ k\in
\mathbb{N}\}$ be a family of $k$-ary, multilinear, homogeneous of degree $0$
maps%
\[
f_{k}:{}L^{\times k}\longrightarrow L^{\prime},\quad k\in\mathbb{N}.
\]
If the $f_{k}$'s are graded symmetric, define multilinear, homogeneous of
degree $1$ maps
\[
K_{f}^{k}:{}L^{\times k}\longrightarrow L^{\prime},
\]
by putting
\begin{align*}
&  K_{f}^{k}(v_{1},\ldots,v_{k})\\
&  :=\sum_{i+j=k}\sum_{\sigma\in S_{i,j}}\alpha(\sigma,\boldsymbol{v}%
)\,f_{i+j+1}(\{v_{\sigma(1)},\ldots,v_{\sigma(i)}\},v_{\sigma(i+1)}%
,\ldots,v_{\sigma(i+j)})\\
&  \quad\ -\sum_{\ell=1}^{k}\sum_{\substack{k_{1}+\cdots+k_{\ell}=k\\k_{1}%
\leq\cdots\leq k_{\ell}}}\sum_{\sigma\in S_{k_{1},\ldots,k_{\ell}}^{<}}%
\alpha(\sigma,\boldsymbol{v})\{f_{k_{1}}(v_{\sigma(1)},\ldots,v_{\sigma
(k_{1})}),\ldots,f_{k_{\ell}}(v_{\sigma(k-k_{\ell}+1)},\ldots,v_{\sigma
(k)})\}^{\prime},
\end{align*}
$\boldsymbol{v}=(v_{1},\ldots,v_{k})\in L^{\times k}$, where $S_{k_{1}%
,\ldots,k_{\ell}}^{<}\subset S_{k_{1},\ldots,k_{\ell}}$ is the set of
$(k_{1},\ldots,k_{\ell})$-unshuffles such that
\[
\sigma(k_{1}+\cdots+k_{i-1}+1)<\sigma(k_{1}+\cdots+k_{i-1}+k_{i}+1)\text{\quad
whenever }k_{i}=k_{i+1}.
\]

\begin{definition}
A \emph{morphism} $f:L\longrightarrow L^{\prime}$ of the $L_{\infty}%
[1]$-algebras $(L,\{\{{}\cdot{}{},\cdots,{}\cdot{}\}_{k},\ k\in\mathbb{N}\})$
and $(L^{\prime},\{\{{}\cdot{}{},\cdots,{}\cdot{}\}_{k}^{\prime}%
,\ k\in\mathbb{N}\})$ is a family $f=\{f_{k},\ k\in\mathbb{N}\}$ of $k$-ary,
multilinear, homogeneous of degree $0$ maps,
\[
f_{k}:{}L^{\times k}\longrightarrow L^{\prime},\quad k\in\mathbb{N},
\]
such that

\begin{enumerate}
\item $f_{k}$ is graded symmetric, and

\item $K_{f}^{k}$ vanishes identically,
\end{enumerate}

\noindent for all $k\in\mathbb{N}$.
\end{definition}

An \emph{identity morphism} $\mathbb{I}:L\longrightarrow L$ is defined by
$\mathbb{I}:=\{\mathbb{I}_{k},\ k\in\mathbb{N}\}$, where $\mathbb{I}%
_{1}:L\longrightarrow L$ is the identity map, and $\mathbb{I}_{k}:L^{\times
k}\longrightarrow L$ is the zero map for $k>1\mathrm{.}$

If $f:L\longrightarrow L^{\prime}$ and $g:L^{\prime}\longrightarrow
L^{\prime\prime}$ are morphisms of $L_{\infty}[1]$-algebras, the composition
$g\circ f:L\longrightarrow L^{\prime\prime}$ is defined as $g\circ
f:=\{(g\circ f)_{k},\ k\in\mathbb{N}\}$, where
\begin{align*}
&  (g\circ f)_{k}(v_{1},\ldots,v_{k})\\
&  :=\sum_{\ell=1}^{k}\sum_{\substack{k_{1}+\cdots+k_{\ell}=k\\k_{1}\leq
\cdots\leq k_{\ell}}}\sum_{\sigma\in S_{k_{1},\ldots,k_{\ell}}^{<}}%
\alpha(\sigma,\boldsymbol{v})g_{\ell}(f_{k_{1}}(v_{\sigma(1)},\ldots
,v_{\sigma(k_{1})}),\ldots,f_{k_{\ell}}(v_{\sigma(k-k_{\ell}+1)}%
,\ldots,v_{\sigma(k)}))
\end{align*}
for all $\boldsymbol{v}=(v_{1},\ldots,v_{k})\in L^{\times k}$, $k\in
\mathbb{N}$. $g\circ f$ is a morphism as well.

Now, let $(L,\mathscr{L})$, $\mathscr{L}=\{\{{}\cdot{},\cdots,{}\cdot{}%
\}_{k},\ k\in\mathbb{N}\}$, be an $L_{\infty}[1]$-algebra, $M$ a graded vector
space, and let $\mathscr{M}=\{\{{}\cdot{},\cdots,{}\cdot|{}\cdot{}{}%
\}_{k},\ k\in\mathbb{N}\}$ be a family of $k$-ary, multilinear, homogeneous of
degree $1$ operations,
\[
\{{}\cdot{},\cdots,{}\cdot|{}\cdot{}{}\}_{k}:{}L^{\times(k-1)}\times
M\longrightarrow M,\quad k\in\mathbb{N}.
\]
If the $\{{}\cdot{},\cdots,{}\cdot|{}\cdot{}{}\}_{k}$'s are graded symmetric
in the first $k-1$ entries, then the $k$\emph{-th Jacobiator of
}$\mathscr{M}$ is, by definition, the multilinear, homogeneous of degree $2$,
map\emph{ }%
\[
J^{k}:{}L^{\times(k-1)}\times M\longrightarrow M,
\]
defined by
\begin{equation}
J^{k}(v_{1},\ldots,v_{k-1}|m):=\sum_{i+j=k}\sum_{\sigma\in S_{i,j}}%
\alpha(\sigma,\boldsymbol{b})\,\{\{b_{\sigma(1)},\ldots,b_{\sigma
(i)}\}^{\oplus},b_{\sigma(i+1)},\ldots,b_{\sigma(i+j)}\}^{\oplus},
\label{Jac2}%
\end{equation}
$\boldsymbol{b}=(v_{1},\ldots,v_{k-1},m)\in L^{\times(k-1)}\times M$, where
the $\{{}\cdot{}{},\cdots,{}\cdot{}\}^{\oplus}$'s are new operations
\[
\{{}\cdot{}{},\cdots,{}\cdot{}\}_{k}^{\oplus}:{}(L\oplus M)^{\times
k}\longrightarrow L\oplus M,\quad k\in\mathbb{N},
\]
defined by extending the $\{{}\cdot{},\cdots,{}\cdot{}\}_{k}$'s and the
$\{{}\cdot{},\cdots,{}\cdot|{}\cdot{}{}\}_{k}$'s by multilinearity, symmetry,
and the condition that the result is zero if more than one entry is from $M$.

\begin{definition}
An $L_{\infty}[1]$\emph{-module} over the $L_{\infty}[1]$-algebra
$(L,\mathscr{L})$, $\mathscr{L}=\{\{{}\cdot{},\cdots,{}\cdot{}\}_{k}%
,\ k\in\mathbb{N}\}$, is a pair $(M,\mathscr{M})$, where $M$ is a graded
vector space, and $\mathscr{M}=\{\{{}\cdot{},\cdots,{}\cdot|{}\cdot{}{}%
\}_{k},\ k\in\mathbb{N}\}$ is a family of $k$-ary, multilinear, homogeneous of
degree $1$, operations,
\[
\{{}\cdot{},\cdots,{}\cdot|{}\cdot{}{}\}_{k}:{}L^{\times(k-1)}\times
M\longrightarrow M,\quad k\in\mathbb{N},
\]
such that

\begin{enumerate}
\item $\{{}\cdot{},\cdots,{}\cdot|{}\cdot{}{}\}_{k}$ is graded symmetric in
the first $k-1$ entries, and

\item the $k$-th Jacobiator of $\mathscr{M}$ vanishes identically, for all
$k\in\mathbb{N}$.
\end{enumerate}
\end{definition}

Finally, define SH Lie-Rinehart algebras. I will use the same sign convention
as in the definition of $L_{\infty}[1]$-algebras (and $L_{\infty}%
[1]$-modules). For simplicity, I call the resulting objects $LR_{\infty}%
[1]$\emph{-algebras}. To my knowledge, (a version of) this definition has been
proposed for the first time {by Kjeseth} in \cite{k01}. Recall that a
Lie-Rinehart algebra is a (purely algebraic) generalization of a Lie algebroid.

\begin{definition}
\label{defLRalg}An $LR_{\infty}[1]$\emph{-algebra} is a pair $(A,\mathcal{Q}%
)$, where $A$ is an associative, graded commutative, unital algebra, and
$(\mathcal{Q},\mathscr{Q})$ is an $L_{\infty}[1]$-algebra, $\mathscr{Q}=\{\{{}%
\cdot{}{},\cdots,{}\cdot{}\}_{k},\ k\in\mathbb{N}\}$. Moreover,

\begin{itemize}
\item {$\mathcal{Q}$} possesses the structure of an{ $A$-module,}

\item $A$ possesses the structure $\mathscr{M}=\{\{{}\cdot{}{},\cdots,{}%
\cdot{}|{}\cdot{}\}_{k},\ k\in\mathbb{N}\}$ of an $L_{\infty}[1]$-module over
$(\mathcal{Q},\mathscr{Q})$,
\end{itemize}

such that,

\begin{itemize}
\item[--] $\{{}\cdot{},\cdots,{}\cdot{}|{}\cdot{}\}_{k}:\mathcal{Q}%
^{\times(k-1)}\times A\longrightarrow A$ is a derivation in the last entry
and is $A$-multilinear in the first $k-1$ entries;

\item[--] Formula
\begin{equation}
\{q_{1},\ldots,q_{k-1},aq_{k}\}=\{q_{1},\ldots,q_{k-1}\,|\,a\}{}\cdot
q_{k}+(-)^{a\boldsymbol{(}q_{1}+\cdots+q_{k-1}+1)}a\cdot\{q_{1},\ldots
,q_{k-1},q_{k}\}, \label{LRP}%
\end{equation}
holds for all $q_{1},\ldots,q_{k}\in\mathcal{Q}$, $a\in A$, $k\in\mathbb{N}$
(in particular, $(\mathcal{Q},\{{}\cdot{}\}_{1})$ is a DG module over
$(A,\{{}|{}\cdot{}{}\}_{1})$).
\end{itemize}

The map $\{{}\cdot{},\cdots,{}\cdot{}|{}\cdot{}\}_{k}:\mathcal{Q}%
^{\times(k-1)}\times A\longrightarrow A$ is called the $k$\emph{th anchor,
}$k\in\mathbb{N}$.
\end{definition}

Note that the brackets $\{{}\cdot{},\cdots,{}\cdot{}\}_{k}$ in the above
definition are only $K$-linear, in general. Formula (\ref{LRP}) is a higher
generalization of the standard identity fulfilled by the anchor in a
Lie-Rinehart algebra.

If $\mathcal{Q}$ is concentrated in degree $-1$, and $A$ is concetrated in
degree $0$, $(A,\mathcal{Q}[-1])$ is a Lie-Rinehart algebra.

{In the \emph{smooth setting}, i.e., when $A$ is the algebra of smooth
functions on a smooth manifold }$M${ (in particular }$A$ is concentrated in
degree $0${), and $\mathcal{Q}[-1]$ is the }$A$-module of sections of a graded
bundle $\mathcal{E}$ over {$\mathcal{M}$}, then $\mathcal{E}${ is sometimes
called an $L_{\infty}$-algebroid \cite{sss09,sz11,b11,bp12}.}

\begin{remark}
\label{Hue1}In \cite{h05}, Huebschmann proposes a definition of a homotopy
version of a Lie-Rinehart algebra, called a \emph{quasi Lie-Rinehart algebra}.
{Although he mentions} the earlier work \cite{k01} of Kjeseth, he doesn't
discuss the relation between quasi Lie-Rinehart algebras and Kjeseth's
homotopy Lie-Rinehart pairs. For instance, he doesn't state explicitly that a
quasi Lie-Rinehart algebra is, in particular, an $L_{\infty}$-algebra.
Actually, this is an immediate consequence of the description of $LR_{\infty
}[1]$-algebras in terms of their Chevalley-Eilenberg algebras (also known as
Maurer-Cartan algebras) discussed in the next section.
\end{remark}

\section{Homotopy Lie-Rinehart Algebras and Multi-Differential
Algebras\label{SHDGLR}}

In this section, I discuss a DG algebraic approach to $LR_{\infty}%
[1]$-algebras, which is especially suited for the aim of this paper, where the
main $LR_{\infty}[1]$-algebra comes from a DG algebra of differential forms.
Propositions in this section are known to specialists but, to my knowledge,
explicit formulas and detailed proofs are not available. I include some of
them here and others in Appendix \ref{Appendix}.

Notice that the approach in terms of DG algebras (as opposed to the one in
terms of coalgebras) has the slight disadvantage of necessitating extra
finiteness conditions: a certain module has to possess a nice biduality property.

{Let $A$ be an associative, commutative, unital algebra over a field $K$ of
zero characteristic, and $Q$ an $A$-module. It is well known that a
Lie-Rinehart algebra structure on $(A,Q)$ determines a homological derivation
$D$ in the graded algebra $\mathrm{Alt}_{A} (Q,A)$ of alternating, $A$-valued,
$A$-multilinear maps on $Q$. The DG algebra $(\mathrm{Alt}_{A} (Q,A),D)$ is
the \emph{Chevalley-Eilenberg algebra} of $Q$. On the other hand, if $Q$ is projective and finitely
generated, then $\mathrm{Alt}_{A} (Q,A)$ is isomorphic to $\Lambda^{\bullet
}_{A}Q^{\ast}$, the exterior algebra of the dual module, and a homological
derivation in it determines a Lie-Rinehart algebra structure on $(A,Q)$. }

{Similarly, let $A$ be a commutative, unital $K$-algebra, and
$\mathcal{Q}$ a graded $A$-module. An $LR_{\infty}[1]$-algebra structure on
$(A,\mathcal{Q}$ determines a formal homological derivation $D$
in the graded algebra $\mathrm{Sym}_{A}(\mathcal{Q},A)$ of graded, graded
symmetric, $A$-valued, $A$-multilinear maps on $\mathcal{Q}$ (see below). In
\cite{k01}, Kjeseth describes $D$ in \emph{coalgebraic terms} and call
$(\mathrm{Sym}_{A}(\mathcal{Q},A),D)$ the \emph{homotopy Rinehart complex} of
$\mathcal{Q}$. On the other hand, if $\mathcal{Q}$ is projective and finitely
generated, then $\mathrm{Sym}_{A}(\mathcal{Q},A)\simeq S_{A}^{\bullet
}\mathcal{Q}^{\ast}$, the graded symmetric algebra of the dual module, and a
formal homological derivation in it determines an $LR_{\infty}[1]$-algebra
structure on $\mathcal{Q}$. This is shown below. Instead of using the language
of formal derivations, I prefer to use the language of multi-differential
algebra structures (named multi-algebras in \cite{h05}), which makes manifest
the role of \emph{higher homotopies}.}

\begin{definition}
A \emph{multi-differential algebra} is a pair $(\mathcal{A},\mathscr{D})$,
where $\mathcal{A}=\bigoplus_{r,s}\mathcal{A}^{r,s}$ is a bi-graded algebra,
understood as a graded algebra with respect to the \emph{total degree} $r+s$
(now on, named simply the \emph{degree}), and $\mathscr{D}=\{d_{k}%
,\,k\in\mathbb{N}_{0}\}$ is a family of graded derivations
\[
d_{k}:\mathcal{A}\longrightarrow\mathcal{A},
\]
of bi-degree $(k,-k+1)$ (in particular $d_{k}$ is homogeneous of degree $1$),
such that the derivations
\[
E_{k}:=\sum_{i+j=k}[d_{i},d_{j}]:{}\mathcal{A}\longrightarrow\mathcal{A},\quad
k\in\mathbb{N}
\]
vanish for all $k\in\mathbb{N}$ (in particular, $(\mathcal{A},d_{0})$ is a DG algebra).
\end{definition}

Huebschmann \cite{h05} introduces multi-differential algebras, under the name
of \emph{multi-algebras}, but then he concentrates on the case $d_{k}=0$ for
$k>2$. Indeed, a multi-differential algebra with $d_{k}=0$ for $k>2$ is
naturally associated with a Lie subalgebroid in a Lie algebroid. However, the
general case is relevant as well. For instance, multi-differential algebras
are at the basis of the BFV-BRST formalism \cite{ht92,s97} (see also
\cite{k01b}).

\begin{remark}
\ {If for any homogeneous element $\omega\in\mathcal{A}$, $d_{k}\omega=0$ for
$k\gg1$, one can consider the derivation $D:=\sum_{k}d_{k}$ (otherwise $D$ is
just a formal derivation)}. Condition $E_{k}=0$ for all $k$ is then equivalent
to $D^{2}=0$.
\end{remark}

{Now, let $A$ be an associative, graded commutative, unital algebra, and
$\mathcal{Q}$ a graded $A$-module. Let $\mathrm{Sym}_{A}^{r}(\mathcal{Q},A)$
be the graded $A$-module of graded, graded symmetric, $A$-multilinear maps
with $r$ entries. A homogeneous element $\omega\in\mathrm{Sym}_{A}%
^{r}(\mathcal{Q},A)$ is a homogeneous, graded symmetric, $K$-multilinear map
\[
\omega:\mathcal{Q}^{r}\longrightarrow A,
\]
such that
\[
\omega(aq_{1},q_{2},\ldots,q_{r})=(-)^{a\omega}a\omega(q_{1},q_{2}%
,\ldots,q_{r}),\quad a\in A,\quad q_{1},\ldots,q_{r}\in\mathcal{Q}.
\]
In particular, $\mathrm{Sym}_{A}^{0}(\mathcal{Q},A)=A$ and $\mathrm{Sym}%
_{A}^{1}(\mathcal{Q},A)=\mathrm{Hom}_{A}(\mathcal{Q},A)$ (I will also denote
it by $\mathcal{Q}^{\ast}$). Consider also $\mathrm{Sym}_{A}(\mathcal{Q}%
,A)=\bigoplus_{r}\mathrm{Sym}_{A}^{r}(\mathcal{Q},A)$. It is a bi-graded $A$-module in an obvious way.
Moreover, $\mathrm{Sym}_{A}(\mathcal{Q},A)$ is a graded, associative, graded
commutative, unital algebra: for $\omega\in\mathrm{Sym}_{A}^{r}(\mathcal{Q}%
,A)$, $\omega^{\prime}\in\mathrm{Sym}_{A}^{r^{\prime}}(\mathcal{Q},A)$,
$q_{1},\ldots,q_{r+r^{\prime}}\in\mathcal{Q}$,
\begin{equation}
(\omega\omega^{\prime})(q_{1},\ldots,q_{r+r^{\prime}})=\sum_{\sigma\in
S_{r,r^{\prime}}}(-)^{\omega^{\prime}(q_{\sigma(1)}+\cdots+q_{\sigma(r)}%
)}\alpha(\sigma,\boldsymbol{q})\omega(q_{\sigma(1)},\ldots,q_{\sigma
(r)})\omega^{\prime}(q_{\sigma(r+1)},\ldots q_{\sigma(r+r^{\prime})}).
\label{Prod}%
\end{equation}
Since $\omega\omega^{\prime}\in\mathrm{Sym}_{A}^{r+r^{\prime}}(\mathcal{Q}%
,A)$, } the algebra {$\mathrm{Sym}_{A}(\mathcal{Q},A)$} is actually bigraded.
Namely, if $\omega\in{\mathrm{Sym}_{A}^{r}(\mathcal{Q},A)}$ is homogeneous,
then its bidegree is defined as $(r,\bar{\omega}-r)$. I denote by
{$\mathrm{Sym}_{A}^{r}(\mathcal{Q},A)^{s}$} the subspace of elements in
{$\mathrm{Sym}_{A}(\mathcal{Q},A)$} of bidegree $(r,s)$.

\begin{remark}
\label{15}Suppose $\mathcal{Q}=A\otimes_{A_{0}}Q[1]$, where $A_{0}$ is the
zeroth homogenous component of $A$ and $Q$ is a projective and finitely
generated $A_{0}$-module. Then {$\mathrm{Sym}_{A}(\mathcal{Q},A$}${)}\simeq
A\otimes_{A_{0}}\Lambda_{A_{0}}^{\bullet}Q^{\ast}$ as an $A$-module. There is
a pre-existing, graded algebra structure on $A\otimes_{A_{0}}\Lambda_{A_{0}%
}^{\bullet}Q^{\ast}$ given by the exterior product
\[
(a\otimes\omega)\wedge(b\otimes\xi):=(-)^{b\omega}ab\otimes\omega\wedge
\xi,\quad a,b\in A,\quad\omega,\xi\in Q^{\ast}.
\]
The isomorphism {$\mathrm{Sym}_{A}(\mathcal{Q},A$}${)}\simeq A\otimes_{A_{0}%
}\Lambda_{A_{0}}^{\bullet}Q^{\ast}$ can be chosen so that it identifies the
algebra structures in $A\otimes_{A_{0}}\Lambda_{A_{0}}^{\bullet}Q^{\ast}$ and
{$\mathrm{Sym}_{A}(\mathcal{Q},A)$}. In order to do that, one should identify
$a\otimes\omega\in A\otimes_{A_{0}}\Lambda_{A_{0}}^{r}Q^{\ast}$ with the
unique element $\Omega$ in {$\mathrm{Sym}_{A}^{r}(\mathcal{Q},A)$} such that
\[
\Omega(q_{1},\ldots,q_{r})=(-)^{r(r-1)/2}a\omega(q_{1},\ldots,q_{r})
\]
for all $q_{1},\ldots,q_{r}\in Q$.
\end{remark}

\begin{theorem}
\label{23}{Let $A$ be an associative, graded commutative, unital algebra and
$\mathcal{Q}$ a projective and finitely generated $A$-module.} An $LR_{\infty
}[1]$-algebra structure on $(A,\mathcal{Q})$ is equivalent to a
multi-differential algebra structure $\{d_{k},\,k\in\mathbb{N}_{0}\}$ on
{$\mathrm{Sym}_{A}(\mathcal{Q},A$}${)}$.
\end{theorem}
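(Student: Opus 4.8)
The strategy is to set up a dictionary between the multi-differential structure $\{d_k\}$ on $\mathrm{Sym}_A(\mathcal{Q},A)$ and the families $\mathscr{Q}=\{\{\cdot,\ldots,\cdot\}_k\}$, $\mathscr{M}=\{\{\cdot,\ldots,\cdot\,|\,\cdot\}_k\}$ of brackets and anchors, exactly parallel to the classical Chevalley–Eilenberg correspondence recalled just before the statement, and then to check that the defining conditions match up on both sides. Because $\mathcal{Q}$ is projective and finitely generated, Remark \ref{15} gives $\mathrm{Sym}_A(\mathcal{Q},A)\simeq S_A^\bullet\mathcal{Q}^\ast$ as a bigraded algebra, so a derivation of such an algebra is determined by its restriction to generators, i.e. by its action on $A=\mathrm{Sym}^0_A(\mathcal{Q},A)$ and on $\mathcal{Q}^\ast=\mathrm{Sym}^1_A(\mathcal{Q},A)$. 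A derivation $d_k$ of bidegree $(k,-k+1)$ therefore sends $A$ into $\mathrm{Sym}^k_A(\mathcal{Q},A)$ and $\mathcal{Q}^\ast$ into $\mathrm{Sym}^{k+1}_A(\mathcal{Q},A)$; dualizing these two components against elements of $\mathcal{Q}$ produces precisely a $k$-ary map $A\to A$ (an anchor-type operation, landing in $A$ via evaluation) and a $(k{+}1)$-ary map on $\mathcal{Q}$ valued in $\mathcal{Q}$ (after using biduality $\mathcal{Q}\simeq\mathcal{Q}^{\ast\ast}$, which is where finite generation and projectivity are used a second time). I would record explicit formulas: roughly, $\{q_1,\ldots,q_{k-1}\,|\,a\}_k = \pm\,(d_{k-1}a)(q_1,\ldots,q_{k-1})$ and $\langle \xi,\{q_1,\ldots,q_k\}_k\rangle = \pm\,(d_{k-1}\xi)(q_1,\ldots,q_k) \mp (\text{correction terms from } d_{k-1} \text{ acting through the module structure})$, with the Koszul signs dictated by the bigrading conventions fixed in Remark \ref{15} and formula (\ref{Prod}).

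Having fixed the dictionary, I would then verify the equivalence of axioms in both directions. First, that $d_k$ is a \emph{derivation} of $\mathrm{Sym}_A(\mathcal{Q},A)$ translates, on the bidegree-$(k,-k+1)$ piece, into exactly the Leibniz-type identities: $A$-multilinearity of $\{\cdot,\ldots,\cdot\}_k$ in its first $k-1$ slots combined with the higher Leibniz rule (\ref{LRP}), and the derivation property of the $k$th anchor in its last entry. This is the content of Definition \ref{defLRalg}'s bulleted clauses, and it comes out by testing the derivation identity $d_k(\omega\omega')=d_k\omega\cdot\omega'\pm\omega\cdot d_k\omega'$ on products of elements of $A$ and $\mathcal{Q}^\ast$ and unwinding (\ref{Prod}). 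Second, and this is the substantive homological step, the vanishing of $E_k=\sum_{i+j=k}[d_i,d_j]$ must be shown equivalent to the vanishing of the $k$th Jacobiators of $\mathscr{Q}$ and of $\mathscr{M}$ (equivalently, $D^2=0$ for the total formal derivation $D=\sum_k d_k$). One evaluates $E_k$ on a function $a\in A$ and on a one-form $\xi\in\mathcal{Q}^\ast$; reading off the $\mathrm{Sym}^k_A$- and $\mathrm{Sym}^{k{+}1}_A$-components and pairing with elements of $\mathcal{Q}$ reproduces, term by term, the two Jacobiator expressions—the inner-bracket-then-outer-bracket sums over $(i,j)$-unshuffles in $J^k$ being precisely the image of the sum $\sum_{i+j=k} d_i d_j$ under the unshuffle combinatorics built into how a composite of two derivations distributes over a symmetric product.

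**Main obstacle.** The routine-but-delicate part is the sign bookkeeping: reconciling the $L_\infty[1]$ sign convention (operations of degree $+1$, symmetric, signs $\alpha(\sigma,\boldsymbol{v})$) with the bigrading shift $(r,\bar\omega-r)$ on $\mathrm{Sym}_A(\mathcal{Q},A)$ and the twist $(-)^{r(r-1)/2}$ in the identification of Remark \ref{15}. One must check that the Koszul signs generated when a derivation passes a symmetric product—and the extra sign in the product formula (\ref{Prod})—assemble into exactly the $\alpha(\sigma,\boldsymbol{v})$ appearing in $J^k$, and that the shift by $1$ is what converts "degree $2-k$, skew-symmetric" data into "degree $1$, symmetric" data compatibly. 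I would handle this by fixing once and for all the generator-level formulas above with explicit signs, checking them on the low arities $k=0,1,2$ against the known Lie–Rinehart case (where $d_0$ is the Chevalley–Eilenberg differential, $\{\cdot\}_1$ the homological differential, $\{\cdot,\cdot\}_2$ the bracket, $\{\cdot\,|\,\cdot\}_2$ the anchor), and then arguing that the general arity follows by the same derivation-extension argument with no new sign input. The finiteness/biduality hypothesis enters exactly twice—to write $\mathrm{Sym}_A(\mathcal{Q},A)\simeq S_A^\bullet\mathcal{Q}^\ast$ so that derivations are determined by their action on generators, and to recover $\mathcal{Q}$-valued brackets from $\mathcal{Q}^\ast$-valued data via $\mathcal{Q}\simeq(\mathcal{Q}^\ast)^\ast$—and I would flag that this is the price of the DG-algebra approach, as the introduction already warns. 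Most of the computational verification I would relegate to Appendix \ref{Appendix}, stating here only the dictionary and the structure of the argument.
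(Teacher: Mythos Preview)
Your proposal is correct and follows essentially the same approach as the paper: set up the Chevalley--Eilenberg dictionary between $d_k$ on generators $A\cup\mathcal{Q}^\ast$ and the anchors/brackets, check that the derivation property of $d_k$ corresponds to $A$-multilinearity of the anchors plus the Leibniz identity (\ref{LRP}), and verify that $E_k=0$ on generators is equivalent to the vanishing of the Jacobiators, with the bulk of the sign-tracking deferred to Appendix \ref{Appendix}. The paper's explicit formulas for $d_k a$, $d_k\omega$, and the implicit definition of $\{q_1,\ldots,q_k\}$ via pairing with $\omega\in\mathcal{Q}^\ast$ are exactly the dictionary you describe.
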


\begin{remark}
\label{Hue2}Huebschmann \cite{h05} basically defines a \emph{quasi
Lie-Rinehart algebra} as the datum of an $A$-module $\mathcal{Q}$ and a
multi-differential algebra structure $\{d_{k},\,k\in\mathbb{N}_{0}\}$ on
{$\mathrm{Sym}_{A}(\mathcal{Q},A)$} such that $d_{k}=0$ for $k>2$. The
derivations $d_{0},d_{1},d_{2}$ induce unary, binary and tertiary brackets in
$\mathcal{Q}$, and unary and binary anchors. However, in \cite{h05}
Huebschmann does not spell out explicitly all identities determined, among the
former operations, by the identities among the $d_{k}$'s. The proof of Theorem
\ref{23} fills this out. In particular, it shows explicitly that a quasi
Lie-Rinehart algebra is a special case of an $LR_{\infty}[1]$-algebra, thus
relating definitions by Huebschmann and Kjeseth. Notice also that, very
recently, Huebschmann himself \cite{h13} proposed an alternative proof of
Theorem \ref{23} using the language of cocommutative coalgebras and twisting cochains.
\end{remark}

\begin{remark}
It is well known that the datum of a Lie-Rinehart algebra structure on a
module $Q$ is also equivalent to the datum of a \emph{suitable} Poisson
(resp., Schouten) algebra structure on $S^{\bullet}Q$ (resp., $\Lambda
^{\bullet}Q$) (see, for instance, \cite{k04}, for details). Similarly, the
datum of an $LR_{\infty}[1]$-algebra structure on $\mathcal{Q}$ (over a
commutative DG algebra $A$) is also equivalent to the datum of a
\emph{suitable} homotopy, Schouten (resp., Poisson) algebra structure on
$\Lambda_{A}^{\bullet}\mathcal{Q}[-1]$ (resp., $S_{A}^{\bullet}\mathcal{Q}%
[-1]$) (see \cite{b11} for details). The description of $LR_{\infty}%
[1]$-algebra structures in terms of multi-differential algebras, however,
looks the most convenient for the purposes of this paper (see Section
\ref{SHLRF}).
\end{remark}

\begin{proof}
[Proof of Theorem \ref{23}]Here is a sketch. I postpone the (computational)
details to Appendix \ref{Appendix}. Let $(A,\mathcal{Q})$ possess the
structure of an $LR_{\infty}[1]$-algebra. Denote brackets and anchors as
usual. Since $\mathcal{Q}$ is projective and finitely generated,
{$\mathrm{Sym}_{A}(\mathcal{Q},$}${A)}\simeq S_{A}^{\bullet}\mathcal{Q}^{\ast}$, which
is generated by $A$ and $\mathcal{Q}^{\ast}$. Define $d_{k}:{\mathrm{Sym}%
_{A}(\mathcal{Q},A)}\longrightarrow{\mathrm{Sym}_{A}(\mathcal{Q},A)}$ on
generators
in the following way. For $a\in A$, put
\[
(d_{k}a)(q_{1},\ldots,q_{k}):=(-)^{a(q_{1}+\cdots+q_{k})}\{q_{1},\ldots
,q_{k}|a\},\quad q_{1},\ldots,q_{k}\in\mathcal{Q},\quad k\geq0.
\]
Obviously $d_{k}a\in{\mathrm{Sym}_{A}^{k}(\mathcal{Q},A)}$. Similarly, for
$\omega\in\mathcal{Q}^{\ast}$, put
\[
\ (d_{k}\omega)(q_{1},\ldots,q_{k+1}):=\sum_{i=1}^{k+1}(-)^{\chi}%
\{q_{1},\ldots,\widehat{q_{i}},\ldots,q_{k+1}|\omega(q_{i})\}+(-)^{\omega
}\omega(\{q_{1},\ldots,q_{k+1}\}),
\]
where $\chi:=\bar{\omega}(\bar{q}_{1}+\cdots+\widehat{\bar{q}_{i}}+\cdots
\bar{q}_{k+1})+\bar{q}_{i}(\bar{q}_{i+1}+\cdots+\bar{q}_{k+1})$, $q_{1}%
,\ldots,q_{k+1}\in\mathcal{Q}$, a hat $\widehat{\cdot}$ denotes omission, and
$k\geq0$. It is easy to show that $d_{k}\omega\in{\mathrm{Sym}_{A}%
^{k+1}(\mathcal{Q},A)}$, in particular, it is
\textcolor{green}{$A$-}multilinear (see Lemma \ref{Alemma1} in Appendix
\ref{Appendix}).

Now
define $d_{k}$ by (uniquely) extending to ${\mathrm{Sym}_{A}(\mathcal{Q},A)}$
as a derivation. This is possible, indeed, for $a,\omega$ as above,
\[
d_{k}(a\omega)=(d_{k}a)\omega+(-)^{a}a(d_{k}\omega),
\]
(see Lemma \ref{Alemma2} in Appendix \ref{Appendix}). Notice that $d_{k}$
satisfies the following \emph{higher Chevalley-Eilenberg formula}:%
\begin{align*}
&  (d_{k}\omega)(q_{1},\ldots,q_{r+k})\\
&  :=\sum_{\sigma\in S_{k,r}}(-)^{\omega(q_{\sigma(1)}+\cdots+q_{\sigma(k)}%
)}\alpha(\sigma,\boldsymbol{q})\{q_{\sigma(1)},\ldots,q_{\sigma(k)}%
\,|\,\omega(q_{\sigma(k)},\ldots,q_{\sigma(k+r)})\}\\
&  \quad\ -\sum_{\tau\in S_{k+1,r-1}}(-)^{\omega}\alpha(\tau,\boldsymbol{q}%
)\omega(\{q_{\tau(1)},\ldots,q_{\tau(k+1)}\},q_{\tau(k+1)},\ldots
,q_{\tau(k+r)}),
\end{align*}
$\omega\in{\mathrm{Sym}_{A}^{r}(\mathcal{Q},A)}$, $q_{1},\ldots,q_{r+k}%
\in\mathcal{Q}$ (see Lemma \ref{Alemma3} in Appendix \ref{Appendix}).

It remains to prove that $E_{k}:=\sum_{\ell+m=k}[d_{\ell},d_{m}]=0$. It is a
degree $2$ derivation of ${\mathrm{Sym}_{A}(\mathcal{Q},A)}$. To show that it
vanishes, it is enough to prove that it vanishes on $A$ and $\mathcal{Q}%
^{\ast}$. Now, for $a\in A$, $\omega\in\mathcal{Q}^{\ast}$ and $q_{1}%
,\ldots,q_{k}\in\mathcal{Q}$
\[
(E_{k}a)(q_{1},\ldots,q_{k})=(-)^{a(q_{1}+\cdots+q_{k})}J^{k+1}(q_{1}%
,\ldots,q_{k}\,|\,a)=0,
\]
and
\[
(E_{k}\omega)(q_{1},\ldots,q_{k+1})=\sum_{i=1}^{k+1}(-)^{\chi}J^{k+1}%
(q_{1},\ldots,\widehat{q_{i}},\ldots,q_{k+1}\,|\,\omega(q_{i}))-\omega
(J^{k+1}(q_{1},\ldots,q_{k+1})),
\]
where
\[
\chi=\bar{\omega}\sum_{j\neq i}\bar{q}_{j}+\bar{q}_{i}\sum_{j>i}\bar{q}_{j},
\]
(see Lemma \ref{Alemma4} in Appendix \ref{Appendix}).

Conversely, let $\{d_{k},\ k\in\mathbb{N}_{0}\}$ be a family of derivations of
${\mathrm{Sym}_{A}(\mathcal{Q},A)}$ such that $d_{k}$ maps ${\mathrm{Sym}%
_{A}^{r}(\mathcal{Q},A)}^{s}$ to ${\mathrm{Sym}_{A}^{r+k}(\mathcal{Q}%
,A)}^{s-k+1}$. For all $a\in A$, and $q_{1},\ldots,q_{k}\in\mathcal{Q}$, put
\[
\{q_{1},\ldots,q_{k-1}|a\}_{k}:=(-)^{a(q_{1}+\cdots+q_{k-1})}(d_{k-1}%
a)(q_{1},\ldots,q_{k-1})\in A
\]
and let $\{q_{1},\ldots,q_{k}\}_{k}\in\mathcal{Q}$ be implicitly defined by
\begin{align*}
\omega(\{q_{1},\ldots,q_{k}\}_{k})  &  :=(-)^{\omega}\sum_{i=1}^{k}%
(-)^{q_{i}(q_{1}+\cdots+q_{i-1})}d_{k-1}(\omega(q_{i}))(q_{1},\ldots
,\widehat{q_{i}},\ldots,q_{k})\\
&  \quad\ -(-)^{\omega}(d_{k-1}\omega)(q_{1},\ldots,q_{k}),
\end{align*}
\ where$\ \omega\in\mathcal{Q}^{\ast}$. Computations in Appendix
\ref{Appendix} (but the other way round) show that i) $\{q_{1},\ldots
,q_{k-1}|a\}$ is symmetric and $A$-linear in the first entries and a graded
derivation in the last one, ii) $\{q_{1},\ldots,q_{k}\}$ is symmetric and
satisfies the Lie-Rinehart property (\ref{LRP}) iii) $J^{k}({}\cdot{}%
,\cdots,{}\cdot{}|{}\cdot{})=0$ and $J^{k}({}\cdot{},\cdots,{}\cdot{})=0$, iff
$E_{k-1}:=\sum_{\ell+m=k-1}[d_{\ell},d_{m}]=0$, $k\in\mathbb{N}$.
\end{proof}

In the smooth setting, the multi-differential algebra $({\mathrm{Sym}%
_{A}(\mathcal{Q},A)},\{d_{k},\ k\in\mathbb{N}_{0}\mathbb{\}})$ determined be
an $LR_{\infty}[1]$-algebra $\mathcal{Q}$ is sometimes called the
\emph{Chevalley-Eilenberg algebra of} $\mathcal{Q}$.

\begin{corollary}
\label{Cor}Any degree $1$, homological derivation $D$ of ${\mathrm{Sym}%
_{A}(\mathcal{Q},A)}$ determines an $LR_{\infty}[1]$-algebra structure on
$(A,\mathcal{Q})$.
\end{corollary}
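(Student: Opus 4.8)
The plan is to realize $D$ as a multi-differential algebra structure on $\mathcal{A}:=\mathrm{Sym}_A(\mathcal{Q},A)$ and then quote the converse half of Theorem~\ref{23}. Recall that $\mathcal{A}=\bigoplus_{r,s}\mathcal{A}^{r,s}$ is bigraded, with $\mathcal{A}^{r,s}=\mathrm{Sym}_A^r(\mathcal{Q},A)^s$, with $\mathcal{A}^{r,s}=0$ for $r<0$, and with a product carrying $\mathcal{A}^{r,s}\otimes\mathcal{A}^{r',s'}$ into $\mathcal{A}^{r+r',s+s'}$. First I would decompose $D$ into its components with respect to the first grading, $D=\sum_k d_k$, so that $d_k$ raises the first degree by exactly $k$; since $\mathcal{A}$ is a genuine direct sum, each $D\omega$ is a finite sum of bihomogeneous terms and the decomposition is literal (not merely formal). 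Because $D$ has total degree $1$, $d_k$ automatically has bidegree $(k,-k+1)$; and since $\mathcal{A}$ is generated, as an algebra, by $A=\mathcal{A}^{0,\bullet}$ and $\mathcal{Q}^{\ast}=\mathcal{A}^{1,\bullet}$ (using $\mathrm{Sym}_A(\mathcal{Q},A)\simeq S_A^{\bullet}\mathcal{Q}^{\ast}$) and a derivation is determined by its restriction to generators, the components with $k<0$ vanish for degree reasons. Hence $\mathscr{D}:=\{d_k,\ k\in\mathbb{N}_0\}$.

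Next I would record that each $d_k$ is itself a graded derivation: extracting from the Leibniz identity $D(\omega\omega')=(D\omega)\omega'+(-)^{\omega}\omega(D\omega')$ the part raising the first degree by exactly $k$, and using that the product is bi-additive in the bidegrees, yields $d_k(\omega\omega')=(d_k\omega)\omega'+(-)^{\omega}\omega(d_k\omega')$ on homogeneous $\omega,\omega'$, hence everywhere by bilinearity. Then I would translate the homological condition: $D^2=\tfrac12[D,D]=\tfrac12\sum_{i,j}[d_i,d_j]=\tfrac12\sum_kE_k$ with $E_k:=\sum_{i+j=k}[d_i,d_j]$, and since $[d_i,d_j]$ has bidegree $(i+j,-(i+j)+2)$ the $E_k$ are the bihomogeneous components of $[D,D]$ and live in pairwise distinct bidegrees. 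Thus $D^2=0$ is equivalent to $E_k=0$ for every $k$ (in particular $d_0^2=0$), i.e. $(\mathcal{A},\mathscr{D})$ is a multi-differential algebra in the sense of the definition above.

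It then only remains to invoke Theorem~\ref{23}: under its standing hypotheses, which are in force here ($A$ associative, graded commutative and unital, and $\mathcal{Q}$ projective and finitely generated), the converse implication turns $\mathscr{D}$ into higher brackets $\{q_1,\ldots,q_k\}_k$ and higher anchors $\{q_1,\ldots,q_{k-1}\,|\,a\}_k$ satisfying all the axioms of Definition~\ref{defLRalg}, the vanishing of every Jacobiator being exactly the vanishing of every $E_k$. This is the asserted $LR_\infty[1]$-algebra structure on $(A,\mathcal{Q})$. I expect the only point requiring any real care to be the very first one, namely checking that $D=\sum_k d_k$ is a legitimate decomposition into derivations of the stated bidegrees; everything after that is bookkeeping with bidegrees together with the already-proved Theorem~\ref{23}.
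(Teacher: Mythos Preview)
Your argument is correct and follows exactly the paper's route: project $D$ onto its bihomogeneous components $d_k$, check these form a multi-differential algebra, and invoke Theorem~\ref{23}; the paper's proof is simply the terse version of yours, defining $d_k$ as $D$ followed by projection onto $\mathrm{Sym}_A^{r+k}(\mathcal{Q},A)^{s-k+1}$ and asserting the conclusion without spelling out the Leibniz or $E_k=0$ verifications. One small overreach: your generator argument for $d_k=0$ when $k<0$ only rules out $k\leq -2$, since $d_{-1}$ could a priori send $\mathcal{Q}^*$ to $A$---a point on which the paper is equally silent.
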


\begin{proof}
It is enough to define $d_{k}$ as the composition
\[
{\mathrm{Sym}_{A}^{r}(\mathcal{Q},A)}^{s}\overset{D}{\longrightarrow
}{\mathrm{Sym}_{A}(\mathcal{Q},A)}\longrightarrow{\mathrm{Sym}_{A}%
^{r+k}(\mathcal{Q},A)}^{s-k+1}%
\]
where the second arrow is the projection. Then $({\mathrm{Sym}_{A}%
(\mathcal{Q},A)},\{d_{k},\ k\in\mathbb{N}_{0}\})$ is a multi-differential
algebra and $(A,\mathcal{Q})$ gets the structure of an $LR_{\infty}[1]$-algebra.
\end{proof}

\section{Morphisms of Homotopy Lie-Rinehart Algebras\label{MSHLR}}

Let $A$ be an associative, graded commutative, unital algebra. From now on, I
will only consider $LR_{\infty}[1]$-algebras $(A,\mathcal{Q})$, with the extra
finiteness condition that $\mathcal{Q}$ is a projective and finitely generated
$A$-module, without further comments. I will also denote by $D_{\mathcal{Q}%
}=\sum_{k}d_{k}$ the formal derivation of ${\mathrm{Sym}_{A}(\mathcal{Q},A)}$
encoding brackets and anchors in $(A,\mathcal{Q})$. Finally, I occasionally
denote by $p:{\mathrm{Sym}_{A}(\mathcal{Q},A)}\longrightarrow A$ the projection.

The equivalent description of $LR_{\infty}[1]$-algebras in terms of
multi-differential algebras suggests a simple definition of morphisms between
$LR_{\infty}[1]$-algebras $(A,\mathcal{P})$ and $(A,\mathcal{Q})$.

\begin{definition}
\label{Def}A morphism $\phi:(A,\mathcal{P})\longrightarrow(A,\mathcal{Q})$ of
$LR_{\infty}[1]$-algebras is a (degree $0$) morphism of graded, unital
algebras $\psi:{\mathrm{Sym}_{A}(\mathcal{Q},A)}\longrightarrow{\mathrm{Sym}%
_{A}(\mathcal{P},A)}$ such that

\begin{enumerate}
\item $\psi$ is a morphism of multi-differential algebras, i.e., formally,
$\psi\circ D_{\mathcal{Q}}=D_{\mathcal{P}}\circ\psi$ (which is to be
understood component-wise);

\item $p\circ\psi=p$.
\end{enumerate}
\end{definition}

I now re-express it in terms of brackets and anchors. To my knowledge, Formula
(\ref{25}) is presented here for the first time. As a morphism of DG algebras,
$\psi$ is completely determined by its restrictions to $A$ and $\mathcal{Q}%
^{\ast}$. Moreover, composing with the projections ${\mathrm{Sym}%
_{A}(\mathcal{P},A)}\longrightarrow{\mathrm{Sym}_{A}^{k}(\mathcal{P},A)}$, one
get degree $0$ maps
\[
\psi_{k}:A\longrightarrow{\mathrm{Sym}_{A}^{k}(\mathcal{P},A)},\quad\Psi
_{k}:\mathcal{Q}^{\ast}\longrightarrow{\mathrm{Sym}_{A}^{k}(\mathcal{P}%
,A)},\quad k\geq0,
\]
determining $\psi$ in an obvious way. Notice that, by definition, $\psi
_{0}=\mathrm{id}_{A}$ and $\Psi_{0}=0$. The maps $\psi_{k}$ and $\Psi_{k}$ are
not $A$-linear in general. They determine degree $0$ maps
\[
\phi_{k}:{}\mathcal{P}^{\times k}{}\times A\longrightarrow A,\quad\Phi_{k}%
:{}\mathcal{P}^{\times k}{}\longrightarrow\mathcal{Q},\quad k\geq1,
\]
as follows. Let $p_{1},\ldots,p_{k}\in\mathcal{P}$. Put%
\[
\phi_{k}(p_{1},\ldots,p_{k}|a):=(-)^{a(p_{1}+\cdots+p_{k})}\psi_{k}%
(a)(p_{1},\ldots,p_{k}).
\]
Notice that $\phi_{k}$ is $A$-linear and graded symmetric in the $p$'s. On the
other hand, let $\Phi_{k}$ be defined (inductively on $k$) by the implicit
formula:
\begin{align}
\omega(\Phi_{k}(p_{1},\ldots,p_{k}))  &  =\Psi_{k}(\omega)(p_{1},\ldots
,p_{k})\nonumber\\
&  \quad\ -\sum_{\substack{i+j=k\\j>0}}\sum_{\sigma\in S_{i,j}}\alpha
(\sigma,\boldsymbol{p})\psi_{j}(\omega(\Phi_{i}(p_{\sigma(1)},\ldots
,p_{\sigma(i)})))(p_{\sigma(i+1)},\ldots,p_{\sigma(i+j)}), \label{9}%
\end{align}
$\omega\in\mathcal{Q}^{\ast}$.

For instance,
\[
\omega(\Phi_{1}(p))=\Psi_{1}(\omega)(p),
\]%
\begin{align*}
\omega(\Phi_{2}(p_{1},p_{2}))  &  =\Psi_{2}(\omega)(p_{1},p_{2})\\
&  \quad\ -\psi_{1}(\Psi_{1}(\omega)(p_{1}))(p_{2})+{}%
\begin{array}
[c]{c}%
\overset{p_{1},p_{2}}{\leftrightarrow}%
\end{array}
,
\end{align*}
where $\overset{a,b}{\leftrightarrow}$ denotes (Koszul signed) transposition
of $a,b$, and
\begin{align*}
\omega(\Phi_{3}(p_{1},p_{2},p_{3}))  &  =\Psi_{3}(\omega)(p_{1},p_{2},p_{3})\\
&  \quad\ -\psi_{1}(\Psi_{2}(\omega)(p_{1},p_{2}))(p_{3})+{}%
\begin{array}
[c]{c}%
\overset{p_{1},p_{2},p_{3}}{\circlearrowleft}%
\end{array}
\\
&  \quad\ -\psi_{2}(\Psi_{1}(\omega)(p_{1}))(p_{2},p_{3})+{}%
\begin{array}
[c]{c}%
\overset{p_{1},p_{2},p_{3}}{\circlearrowleft}%
\end{array}
\\
&  \quad\ -\psi_{1}(\psi_{1}(\Psi_{1}(\omega)(p_{1}))(p_{2}))(p_{3})+%
\begin{array}
[c]{c}%
\overset{p_{1},p_{2},p_{3}}{\circlearrowleft}%
\end{array}
,
\end{align*}
where $\overset{a,b,c}{\circlearrowleft}$ denotes (Koszul signed) cyclic
permutations of $a,b,c$. Notice that, before one could even write (\ref{9}),
one should prove that the (lower) $\Phi_{i}$'s are well defined, specifically,
that the right hand side $R_{k}(\omega)$ of (\ref{9}) is $A$-linear in
$\omega$. I do this in Appendix \ref{Appendix} (see Lemma \ref{Alemma5} therein).

My next aim is to express condition $\psi\circ D_{\mathcal{Q}}=D_{\mathcal{P}%
}\circ\psi$ in terms of the $\phi$'s and the $\Phi$'s. Notice that for any
morphism of graded algebras $\psi:{\mathrm{Sym}_{A}(\mathcal{Q},A)}%
\longrightarrow{\mathrm{Sym}_{A}(\mathcal{P},A)}$, $[\psi,D]:=\psi\circ
D_{\mathcal{Q}}-D_{\mathcal{P}}\circ\psi$ is a formal derivation along $\psi$.
In particular, $[\psi,D]=0$ iff it vanishes on $A$ and $\mathcal{Q}^{\ast}$.
Let $\omega\in{\mathrm{Sym}_{A}(\mathcal{P},A)}$. In the following I will
denote by $\omega_{k}$ its projection onto ${\mathrm{Sym}_{A}^{k}%
(\mathcal{P},A)}$, $k\geq0$.

\begin{lemma}
\label{Lemma}Let $\omega\in{\mathrm{Sym}_{A}^{r}(\mathcal{Q},A)}$. Then%
\begin{align*}
&  \psi(\omega)_{k}(p_{1},\ldots,p_{k})\\
&  =\sum_{\substack{\ell_{0}+\cdots+\ell_{r}=k\\\ell_{1}\leq\cdots\leq\ell
_{r}}}\sum_{\sigma\in T_{\ell_{0}|\ell_{1},\ldots,\ell_{r}}}(-)^{\chi}%
\alpha(\sigma,\boldsymbol{p})\phi_{\ell_{0}}(p_{\sigma(1)},\ldots
,p_{\sigma(\ell_{0})}|\omega(\Phi_{1,\sigma}(\boldsymbol{p}),\ldots
,\Phi_{r,\sigma}(\boldsymbol{p}))),
\end{align*}
for all $\boldsymbol{p}=(p_{1},\ldots,p_{k})\in\mathcal{P}^{k}$, where
$\chi:=\bar{\omega}(\bar{p}_{\sigma(1)}+\cdots+\bar{p}_{\sigma(\ell_{0})})$,
\[
\Phi_{i,\sigma}(\boldsymbol{p}):=\Phi_{\ell_{i}}(p_{\sigma(\ell_{0}+\ell
_{1}+\cdots+\ell_{i-1}+1)},\ldots,p_{\sigma(\ell_{0}+\ell_{1}+\cdots+\ell
_{i})}),\quad i>0,
\]
and $T_{\ell_{0}|\ell_{1},\ldots,\ell_{r}}$ is the set of permutations of
$\{1,\ldots,k\}$ such that i) $\sigma(\ell_{0}+\ell_{1}+\cdots+\ell
_{i}+1)<\cdots<\sigma(\ell_{0}+\ell_{1}+\cdots+\ell_{i}+\ell_{i+1})$, and ii)
$\sigma(\ell_{0}+\ell_{1}+\cdots+\ell_{i}+1)<\sigma(\ell_{0}+\ell_{1}%
+\cdots+\ell_{i}+\ell_{i+1}+1)$ whenever $\ell_{i}=\ell_{i+1}$, $i>0$.
\end{lemma}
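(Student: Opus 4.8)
The plan is to compute $\psi(\omega)$ explicitly by exploiting that $\psi$ is a morphism of graded algebras, hence determined by its values on the algebra generators $A$ and $\mathcal{Q}^{\ast}$, together with the multiplicativity formula (\ref{Prod}) for the product in $\mathrm{Sym}_A(-,A)$. First I would fix a homogeneous $\omega \in \mathrm{Sym}_A^r(\mathcal{Q},A)$. Since $\mathrm{Sym}_A(\mathcal{Q},A) \simeq S_A^{\bullet}\mathcal{Q}^{\ast}$ is generated in degrees $0$ and $1$, one can write $\omega$ (locally, or after choosing a finite generating set of $\mathcal{Q}^{\ast}$, which exists by the finiteness hypothesis) as a sum of products $a\,\omega^{1}\cdots\omega^{r}$ with $a \in A$ and $\omega^{j} \in \mathcal{Q}^{\ast}$. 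The key point is that it suffices to prove the formula on such monomials and then extend by $A$-linearity and additivity; both sides of the claimed identity are manifestly $A$-linear in $\omega$ (the right-hand side because $\phi_{\ell_0}$ is $A$-linear in its last argument and $\omega$ enters only there).

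The main computation is then: apply $\psi$ to the monomial $a\,\omega^{1}\cdots\omega^{r}$, using that $\psi$ is multiplicative, to get $\psi(a)\,\psi(\omega^{1})\cdots\psi(\omega^{r})$; then expand each factor into its homogeneous components, $\psi(a) = \sum_{\ell_0}\psi_{\ell_0}(a)$ and $\psi(\omega^{j}) = \sum_{\ell_j}\Psi_{\ell_j}(\omega^{j})$; then iterate the product formula (\ref{Prod}) to collect the degree-$k$ part, which forces $\ell_0 + \ell_1 + \cdots + \ell_r = k$ and produces a sum over unshuffles of the arguments $p_1,\ldots,p_k$ among the $r+1$ factors, with Koszul signs $\alpha(\sigma,\boldsymbol{p})$ and the extra sign $\chi = \bar\omega(\bar p_{\sigma(1)} + \cdots + \bar p_{\sigma(\ell_0)})$ coming from moving $\psi(a)$ (which carries the degree of $\omega$ relative to the $\Psi_j(\omega^j)$'s) past the first block of arguments — this is exactly the sign prescribed by (\ref{Prod}). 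Finally, I would recognize $\psi_{\ell_0}(a)$ applied to the first block as $\phi_{\ell_0}(\cdots | a)$ up to the built-in sign in the definition of $\phi_k$, and recognize $\Psi_{\ell_j}(\omega^{j})$ applied to the $j$-th block as $\omega^{j}(\Phi_{\ell_j}(\cdots)) + (\text{lower terms})$ via the defining recursion (\ref{9}). Summing the lower terms over all factors and all splittings reassembles, after reindexing, into the appearance of $\omega(\Phi_{1,\sigma}(\boldsymbol{p}),\ldots,\Phi_{r,\sigma}(\boldsymbol{p}))$ with a single $\omega$ evaluated on the tuple of $\Phi$'s; the restriction to ordered multi-indices $\ell_1 \leq \cdots \leq \ell_r$ together with the secondary condition defining $T_{\ell_0|\ell_1,\ldots,\ell_r}$ accounts for the graded symmetry of $\omega$ in its $r$ entries, so that each unshuffle is counted exactly once.

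The hard part will be the bookkeeping of signs and the inductive disentangling of the "lower terms": one must check that the recursive definition of $\Phi_k$ in (\ref{9}) is precisely the one that converts the nested expression $\Psi_{\ell_j}(\omega^{j})$ together with the products $\psi_{i}(\omega^{j}(\Phi(\cdots)))$ appearing when $\psi$ is iterated, into the clean closed form $\omega(\Phi_{1,\sigma},\ldots,\Phi_{r,\sigma})$. Concretely, one argues by induction on $r$ (the number of $\mathcal{Q}^{\ast}$-factors) and, for fixed $r$, tracks how the degree-$k$ component of a product of two already-handled pieces decomposes; the induction hypothesis supplies the formula for products of fewer factors, and the product rule (\ref{Prod}) does the rest. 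I expect that the sign $\chi$ and the Koszul signs $\alpha(\sigma,\boldsymbol{p})$ combine associatively under this iteration — a routine but delicate verification — so that the only genuinely structural input is (\ref{Prod}) together with the definitions of $\phi_k$, $\Phi_k$, $\psi_k$, $\Psi_k$; the computational details of the sign verification I would relegate to Appendix \ref{Appendix}, in the spirit of the other proofs in this section.
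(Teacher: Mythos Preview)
Your approach is essentially the paper's: write $\omega$ as a product of elements of $\mathcal{Q}^{\ast}$, use multiplicativity of $\psi$, and unpack via (\ref{9}). Two simplifications you are missing make the paper's argument much shorter. First, the paper evaluates both sides on equal even arguments $p_{1}=\cdots=p_{k}=p$, which kills all Koszul signs and turns unshuffle sums into multinomial coefficients; since both sides are graded symmetric multilinear maps, this suffices. Second, rather than carrying ``lower terms'' and inducting on $r$, the paper inverts (\ref{9}) in one stroke to get
\[
\Psi_{m}(\sigma)(p^{m})=\sum_{s+t=m}\tbinom{m}{s}\,\psi\bigl(\sigma(\Phi_{t}(p^{t}))\bigr)_{s}(p^{s}),\qquad \sigma\in\mathcal{Q}^{\ast},
\]
substitutes this for each factor $\sigma_{i}$, and then uses multiplicativity of $\psi$ a second time to merge the resulting $\psi(\,\cdot\,)_{s_{i}}$ factors into a single $\psi\bigl(\sigma_{1}(\Phi_{t_{1}})\cdots\sigma_{r}(\Phi_{t_{r}})\bigr)_{s}(p^{s})=\phi_{s}(p^{s}\,|\,\omega(\Phi_{t_{1}},\ldots,\Phi_{t_{r}}))$, up to the sign built into $\phi$. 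No induction on $r$ is needed and no separate $a\in A$ factor has to be carried.
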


\begin{proof}
see Appendix \ref{Appendix}.
\end{proof}

Now I want to characterize morphisms of $LR_{\infty}[1]$-algebras. If $a\in
A$, then $\psi(D_{\mathcal{Q}}a)=D_{\mathcal{P}}\psi(a)$ means that
\begin{equation}
\psi(D_{\mathcal{Q}}a)_{k}(p_{1},\ldots,p_{k})=(D_{\mathcal{P}}\psi
(a))_{k}(p_{1},\ldots,p_{k}),\quad p_{1},\ldots,p_{k}\in\mathcal{P},
\label{28}%
\end{equation}
for all $k$. Since both hand sides of (\ref{28}) are graded $K$-multilinear,
and graded symmetric, \emph{they coincide iff they coincide on equal, even
arguments} $p_{1}=\cdots=p_{k}=p$ (in the following, and especially in
Appendix \ref{Appendix}, to prove that two multilinear, graded symmetric maps
are equal, I will often use the trick of evaluating them on equal, even
arguments, without further comments). Now, $\psi(D_{\mathcal{Q}}a)_{k}%
=\sum_{m}\psi(d_{m}a)_{k}$. In its turn, in view of Lemma \ref{Lemma},
$\psi(d_{m}a)_{k}$ is given by%
\[
\psi(d_{m}a)_{k}(p^{k})=\sum_{\substack{\ell_{0}+\cdots+\ell_{m}=k\\\ell
_{1}\leq\cdots\leq\ell_{m}}}C(\ell_{0}|\ell_{1},\ldots,\ell_{m})\phi_{\ell
_{0}}(p^{\ell_{0}}|\{\Phi_{\ell_{1}}(p^{\ell_{1}}),\ldots,\Phi_{\ell_{m}%
}(p^{\ell_{m}})|a\}),
\]
where $C(\ell_{0}|\ell_{1},\ldots,\ell_{m})$ is the cardinality of
$T_{\ell_{0}|\ell_{1},\ldots,\ell_{m}}$. On the other hand, $(D_{\mathcal{P}%
}\psi(a))_{k}=\sum_{m}(d_{m}\psi(a))_{k}$ and a short computation shows that
\[
(d_{m}\psi(a))_{k}(p^{k})=\tbinom{k}{m}\{p^{m}|\phi_{k-m}(p^{k-m}%
|a)\}-\tbinom{k}{m+1}\phi_{k-m}(\{p^{m+1}\},p^{k-m-1}|a).
\]

I conclude that
\begin{align}
&  \sum_{m}\sum_{\substack{\ell_{0}+\cdots+\ell_{m}=k\\\ell_{1}\leq\cdots
\leq\ell_{m}}}C(\ell_{0}|\ell_{1},\ldots,\ell_{m})\phi_{\ell_{0}}(p^{\ell_{0}%
}|\{\Phi_{\ell_{1}}(p^{\ell_{1}}),\ldots,\Phi_{\ell_{m}}(p^{\ell_{m}%
})|a\})\nonumber\\
&  =\sum_{m}\tbinom{k}{m}\{p^{m}|\phi_{k-m}(p^{k-m}|a)\}-\sum_{m}\tbinom
{k}{m+1}\phi_{k-m}(\{p^{m+1}\},p^{k-m-1}|a), \label{11}%
\end{align}
for all $k$.

Similarly, let $\omega\in\mathcal{Q}^{\ast}$. Then $\psi(D_{\mathcal{Q}}%
\omega)=D_{\mathcal{P}}\psi(\omega)$ means that $\psi(D_{\mathcal{Q}}%
\omega)_{k}=(D_{\mathcal{P}}\psi(\omega))_{k}$ for all $k$, which can be
compactly rewritten in the form
\begin{equation}
\sum_{m+\ell=k}\tbinom{m+\ell}{m}\phi_{\ell}(p^{\ell}|\omega(K_{\Phi}%
^{m}(p^{m})))=0, \label{14}%
\end{equation}
where
\[
K_{\Phi}^{n}(p^{n})=\sum_{m}\sum_{\substack{\ell_{1}+\cdots+\ell_{m}%
=n\\\ell_{1}\leq\cdots\leq\ell_{m}}}C(\ell_{1},\ldots,\ell_{m})\{\Phi
_{\ell_{1}}(p^{\ell_{1}}),\ldots,\Phi_{\ell_{m}}(p^{\ell_{m}})\}-\sum
_{m+r=n}\tbinom{m+r}{m}\Phi_{r+1}(\{p^{m}\},p^{r}),
\]
and $C(\ell_{1},\ldots,\ell_{m})$ is the cardinality of $S_{\ell_{1}%
,\ldots,\ell_{m}}^{<}$ (see Lemma \ref{Alemma6} in Appendix \ref{Appendix}).
From Formula (\ref{14}) it is easy to see, by induction on $r$, that $K_{\Phi
}^{r}$ vanishes for all $r$. I have thus proved the following

\begin{theorem}
\label{TheorMor}A (degree $0$) morphism of graded, unital algebras
$\psi:{\mathrm{Sym}_{A}(\mathcal{Q},A)}\longrightarrow{\mathrm{Sym}%
_{A}(\mathcal{P},A)}$ such that $p\circ\psi=p$ is a morphism of $LR_{\infty
}[1]$-algebras $\phi:(A,\mathcal{P})\longrightarrow(A,\mathcal{Q})$ iff

\begin{enumerate}
\item $\phi=\{\phi_{k},\ k\in\mathbb{N}\}$ is a morphism of $L_{\infty}[1]$-algebras

\item Formula
\begin{align}
&  \sum_{\substack{m\\\ell_{0}+\cdots+\ell_{m}=k\\\ell_{1}\leq\cdots\leq
\ell_{m-1}}}\sum_{\sigma\in T_{\ell_{0}|\ell_{1},\ldots,\ell_{m}}%
}(-)^{p_{\sigma(1)}+\cdots+p_{\sigma(\ell_{0})}}\alpha(\sigma,\boldsymbol{p}%
)\phi_{\ell_{0}}(p_{\sigma(1)},\ldots,p_{\sigma(\ell_{0})}|\{\Phi_{1,\sigma
}(\boldsymbol{p}),\ldots,\Phi_{m,\sigma}(\boldsymbol{p})|a\})\nonumber\\
&  =\sum_{\ell+m=k}\sum_{\sigma\in S_{\ell,m}}\alpha(\sigma,\boldsymbol{p}%
)\{p_{\sigma(1)},\ldots,p_{\sigma(\ell)}|\phi_{m}(p_{\sigma(\ell+1)}%
,\ldots,p_{\sigma(\ell+m)}|a)\}\nonumber\\
&  \quad\ -\sum_{\ell+m=k}\sum_{\sigma\in S_{\ell,m}}\alpha(\sigma
,\boldsymbol{p})\phi_{m+1}(\{p_{\sigma(1)},\ldots,p_{\sigma(\ell)}%
\},p_{\sigma(\ell+1)},\ldots,p_{\sigma(\ell+m)}|a) \label{25}%
\end{align}
holds for all $\boldsymbol{p}=(p_{1},\ldots,p_{k})\in\mathcal{P}^{\times k}$,
$a\in A$, and $k\in\mathbb{N}_{0}$.
\end{enumerate}
\end{theorem}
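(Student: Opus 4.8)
The plan is to translate the single algebraic equation $[\psi,D]=0$ into conditions on the component maps by exactly the same mechanism used in the proof of Theorem \ref{23}: a formal derivation along $\psi$ vanishes iff it vanishes on a generating set, here $A$ and $\mathcal{Q}^\ast$. So I would split the argument into the ``$a\in A$'' part and the ``$\omega\in\mathcal{Q}^\ast$'' part, and in each part expand both sides using Lemma \ref{Lemma} and the explicit formulas for the $d_m$'s (the higher Chevalley--Eilenberg formula from the proof of Theorem \ref{23} and the definition of the anchors). Throughout I would evaluate multilinear graded-symmetric maps on equal even arguments $p_1=\cdots=p_k=p$, which reduces unshuffle sums to binomial coefficients and makes the bookkeeping tractable; this is the same trick announced just before Lemma \ref{Lemma}.

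First, for $a\in A$: writing $\psi(D_{\mathcal Q}a)_k=\sum_m\psi(d_m a)_k$, I apply Lemma \ref{Lemma} to $\omega=d_m a\in\mathrm{Sym}^m_A(\mathcal Q,A)$, noting that by definition $(d_m a)(q_1,\dots,q_m)=(-)^{a(q_1+\cdots+q_m)}\{q_1,\dots,q_m|a\}$; this produces the left-hand side of (\ref{25}) (equivalently, on even equal arguments, the left-hand side of (\ref{11})), with the combinatorial constants $C(\ell_0|\ell_1,\dots,\ell_m)=\#T_{\ell_0|\ell_1,\dots,\ell_m}$. On the other side, $(D_{\mathcal P}\psi(a))_k=\sum_m(d_m\psi(a))_k$, and since $\psi(a)$ has components $\psi_\ell(a)$ built from the $\phi_\ell(\,\cdot\,|a)$'s, a direct computation of $d_m$ acting on $\psi_{k-m}(a)$ via the higher Chevalley--Eilenberg formula gives the two terms $\binom{k}{m}\{p^m|\phi_{k-m}(p^{k-m}|a)\}-\binom{k}{m+1}\phi_{k-m}(\{p^{m+1}\},p^{k-m-1}|a)$; summing over $m$ yields the right-hand side of (\ref{25}). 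Equating the two expansions is precisely condition (\ref{25}).

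Second, for $\omega\in\mathcal{Q}^\ast$: here the recursive definition (\ref{9}) of $\Phi_k$ is engineered so that $\psi(D_{\mathcal Q}\omega)_k-(D_{\mathcal P}\psi(\omega))_k$ collapses, after cancellation of the telescoping terms coming from (\ref{9}), to the compact form $\sum_{m+\ell=k}\binom{m+\ell}{m}\phi_\ell(p^\ell|\omega(K_\Phi^m(p^m)))=0$ of (\ref{14}), where $K_\Phi^n$ is the expression displayed after (\ref{14}) --- this identification is the content of Lemma \ref{Alemma6}. Since $\phi_0=\mathrm{id}_A$, the $m=k$ term of (\ref{14}) is $\omega(K_\Phi^k(p^k))$, and all lower terms involve $K_\Phi^m$ with $m<k$; an induction on $k$ (with the base case $K_\Phi^0=0$ and $K_\Phi^1=0$, the latter because $\Phi_1$ and the unary brackets/maps already match) shows that (\ref{14}) holding for all $k$ is equivalent to $K_\Phi^r=0$ for all $r$, i.e. exactly to $\phi=\{\phi_k\}$ being a morphism of $L_\infty[1]$-algebras in the sense defined earlier (note $K_\Phi^r$ is the map $K_f^r$ of that definition with $f=\phi$). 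Putting the two parts together: $[\psi,D]=0$ on $A$ is condition (2) of the theorem, and $[\psi,D]=0$ on $\mathcal{Q}^\ast$ is condition (1), which proves the equivalence.

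The main obstacle is the second part: verifying that the difference $\psi(D_{\mathcal Q}\omega)_k-(D_{\mathcal P}\psi(\omega))_k$ really does reduce to (\ref{14}) requires carefully tracking how the correction terms in the recursive definition (\ref{9}) of $\Phi_k$ interact with the higher Chevalley--Eilenberg formula for $d_m\omega$ and with the composite $\psi_j(\omega(\Phi_i(\cdots)))$ terms, and checking that all Koszul signs ($\chi$ in Lemma \ref{Lemma}, plus the signs in (\ref{9})) conspire correctly. I would relegate this sign-and-combinatorics bookkeeping to Appendix \ref{Appendix} (Lemma \ref{Alemma6}), exactly as the statement of the theorem indicates, and in the main text give only the structural argument above.
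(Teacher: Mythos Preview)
Your proposal is correct and follows essentially the same route as the paper: reduce $[\psi,D]=0$ to vanishing on the generators $A$ and $\mathcal{Q}^\ast$, use Lemma \ref{Lemma} together with the higher Chevalley--Eilenberg formula to obtain (\ref{11})/(\ref{25}) from the $A$-part, and then show (via the computation recorded as Lemma \ref{Alemma6}) that the $\mathcal{Q}^\ast$-part reduces to (\ref{14}), from which $K_\Phi^r=0$ follows by induction. One small slip: when you write ``$K_\Phi^r$ is the map $K_f^r$ \ldots\ with $f=\phi$'' you mean $f=\Phi$ (the maps $\mathcal{P}^{\times k}\to\mathcal{Q}$), not the anchor-type maps $\phi_k$; the theorem statement itself has this same notational looseness.
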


Formula (\ref{25}) could be hardly guessed without the description of
$LR_{\infty}[1]$-algebras in terms of multi-differential algebras.

\part{Geometric Applications}

\label{P2}

\section{Form-Valued Vector Fields\label{FVVF}}

Le $M$ be a smooth manifold. A \emph{form-valued vector field} on $M$ is an
element of the (graded) $\Lambda(M)$-module obtained from $\mathfrak{X}(M)$ by
extension of scalars, i.e., $\Lambda(M)\otimes\mathfrak{X}(M)$. Notice that a
form-valued vector field $Z$ on $M$ may be understood as a derivation of the
algebra $C^{\infty}(M)$ with values in the $C^{\infty}(M)$-module $\Lambda
(M)$. It may be understood also as a $\mathfrak{X}(M)$-valued, skew-symmetric,
multilinear map on $\mathfrak{X}(M)$. In the following I will take both points
of view. Form-valued vector fields inherit a rich calculus which I call
\emph{Fr\"{o}licher-Nijenhuis calculus}. In this section, I briefly review it,
referring to \cite{fn56} and \cite{m08} for details.

\begin{theorem}
[see, for instance, \cite{m08}]Let $Z\in\Lambda(M)\otimes\mathfrak{X}(M)$.
There exist unique graded derivations $i_{Z},L_{Z}:\Lambda(M)\longrightarrow
\Lambda(M)$ (called the \emph{insertion of} $Z$, and the \emph{Lie derivative
along} $Z$, respectively) such that

\begin{enumerate}
\item $i_{Z}$ is $C^{\infty}(M)$-linear and $i_{Z}df=Z(f)$ for all $f\in
C^{\infty}(M)$,

\item $L_{Z}$ commutes (in the graded sense) with $d$ and $L_{Z}f=Z(f)$ for
all $f\in C^{\infty}(M)$.
\end{enumerate}

Conversely, let $\Delta:\Lambda(M)\longrightarrow\Lambda(M)$ be a graded
derivation. There exist unique form-valued vector fields $Z,Y$ such that
\[
\Delta=i_{Z}+L_{Y}.
\]

\end{theorem}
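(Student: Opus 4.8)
The plan is to use the standard fact that $\Lambda(M)$ is, locally, generated as a $C^{\infty}(M)$-algebra by the exact one-forms $df$, so that a graded derivation of $\Lambda(M)$ — being a local operator — is completely and \emph{freely} determined by its restrictions to $C^{\infty}(M)$ and to $\{df : f\in C^{\infty}(M)\}$. Both the existence and the uniqueness parts reduce to this principle plus sign bookkeeping.

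For existence of $i_{Z}$ I would first reduce to homogeneous $Z\in\Lambda^{q}(M)\otimes\mathfrak{X}(M)$ and, writing $Z=\sum_{\alpha}\omega_{\alpha}\otimes X_{\alpha}$, set $i_{Z}:=\sum_{\alpha}\omega_{\alpha}\wedge i_{X_{\alpha}}(\cdot)$, where $i_{X_{\alpha}}$ is the ordinary contraction. A one-line sign check shows that left multiplication by a homogeneous form composed with a graded derivation is again a graded derivation; since $i_{X_{\alpha}}$ has degree $-1$, $i_{Z}$ is a graded derivation of degree $q-1$, it is $C^{\infty}(M)$-linear, it is independent of the chosen decomposition of $Z$ (both sides are bilinear over $C^{\infty}(M)$), and $i_{Z}\,df=\sum_{\alpha}X_{\alpha}(f)\,\omega_{\alpha}=Z(f)$. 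Then I would define $L_{Z}:=[i_{Z},d]=i_{Z}d-(-)^{q-1}d\,i_{Z}$, which is automatically a graded derivation of degree $q$; the graded Jacobi identity together with $d^{2}=0$ gives $[L_{Z},d]=0$, and since $i_{Z}$ annihilates $C^{\infty}(M)$ one gets $L_{Z}f=i_{Z}\,df=Z(f)$. Uniqueness is then immediate: a $C^{\infty}(M)$-linear graded derivation with prescribed values on all $df$ is unique by the generation principle, and a graded derivation commuting with $d$ and restricting to $f\mapsto Z(f)$ on $C^{\infty}(M)$ necessarily sends $df$ to $\pm\,d(Z(f))$, hence is again determined.

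For the converse, let $\Delta$ be a graded derivation of degree $p$. Its restriction to $C^{\infty}(M)$ is a derivation of $C^{\infty}(M)$ with values in the module $\Lambda^{p}(M)$; using that $M$ is a finite-dimensional smooth manifold, so that $\Omega^{1}(M)$ is finitely generated projective and $\mathrm{Der}(C^{\infty}(M),P)\cong P\otimes_{C^{\infty}(M)}\mathfrak{X}(M)$ for every module $P$, there is a unique $Y\in\Lambda^{p}(M)\otimes\mathfrak{X}(M)$ with $Y(f)=\Delta(f)$. The difference $\Delta':=\Delta-L_{Y}$ is a graded derivation of degree $p$ that annihilates $C^{\infty}(M)$, hence is $C^{\infty}(M)$-linear; then $f\mapsto\Delta'(df)$ is a $\Lambda^{p+1}(M)$-valued derivation of $C^{\infty}(M)$ (the Leibniz rule follows from $\Delta'$ being a $C^{\infty}(M)$-linear derivation vanishing on functions), so it equals $f\mapsto Z(f)$ for a unique $Z\in\Lambda^{p+1}(M)\otimes\mathfrak{X}(M)$. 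Now $\Delta'$ and $i_{Z}$ are $C^{\infty}(M)$-linear graded derivations agreeing on every $df$, hence equal, so $\Delta=i_{Z}+L_{Y}$. For uniqueness of the decomposition, $i_{Z}+L_{Y}=0$ evaluated on functions gives $Y(f)=0$ for all $f$, so $Y=0$ by the derivation--module correspondence, and then $i_{Z}=0$ forces $Z(f)=i_{Z}(df)=0$, so $Z=0$.

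The main obstacle — really the only step that is not pure sign bookkeeping — is making the informal phrase ``determined on generators'' rigorous: one needs both the locality of graded derivations of $\Lambda(M)$ and the fact that $\Lambda(M)$ is locally generated over $C^{\infty}(M)$ by functions and their differentials, and one needs the module-theoretic identification $\mathrm{Der}(C^{\infty}(M),P)\cong P\otimes_{C^{\infty}(M)}\mathfrak{X}(M)$, which is precisely where finite-dimensionality of $M$ is used. Everything else is straightforward verification.
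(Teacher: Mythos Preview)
The paper does not give its own proof of this theorem: it is stated as a standard fact with the reference ``see, for instance, \cite{m08}'' and no argument is supplied. Your proof is correct and is exactly the classical Fr\"{o}licher--Nijenhuis argument one finds in Michor's book: build $i_{Z}$ as $\sum\omega_{\alpha}\wedge i_{X_{\alpha}}$, set $L_{Z}=[i_{Z},d]$, and for the converse peel off $L_{Y}$ using $\Delta|_{C^{\infty}(M)}$, then identify the $C^{\infty}(M)$-linear remainder with an $i_{Z}$ via its values on exact $1$-forms. There is nothing to compare against here; your write-up is fine.
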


Notice that, if $Z\in\Lambda^{k}(M)\otimes\mathfrak{X}(M)$ then $i_{Z}$
(resp., $L_{Z}$) is homogeneous of degree $k-1$ (resp., $k$). Conversely if
$\Delta=i_{Z}+L_{Y}:\Lambda(M)\longrightarrow\Lambda(M)$ is a homogeneous
derivation of degree $\ell$, then $Z\in\Lambda^{\ell+1}(M)\otimes
\mathfrak{X}(M)$ and $Y\in\Lambda^{\ell}(M)\otimes\mathfrak{X}(M)$. For
$Z\in\Lambda(M)\otimes\mathfrak{X}(M)$, abusing the notation, I also denote by
$i_{Z}$ the $C^{\infty}(M)$-linear map
\[
i_{Z}\otimes\operatorname{id}:\Lambda(M)\otimes\mathfrak{X}(M)\longrightarrow
\Lambda(M)\otimes\mathfrak{X}(M).
\]

Derivations of $\Lambda(M)$ of the form $i_{Z}$ (resp., $L_{Z}$) form a Lie
subalgebra. Namely, let $Z_{1},Z_{2}\in\Lambda(M)\otimes\mathfrak{X}(M)$. Then
there exists a unique $Z\in\Lambda(M)\otimes\mathfrak{X}(M)$ such that
\[
\lbrack i_{Z_{1}},i_{Z_{2}}]=i_{Z}\quad\text{(resp., }[L_{Z_{1}},L_{Z_{2}%
}]=L_{Z}\text{).}%
\]
$Z$ is denoted by $[Z_{1},Z_{2}]_{\mathrm{nr}}$ (resp., $[\![Z_{1},Z_{2}]\!]$)
and called the \emph{Nijenhuis-Richardson} (resp., \emph{Fr\"{o}licher-Nijenhuis})
\emph{bracket} of $Z_{1}$ and $Z_{2}$. The brackets $[{}\cdot{},{}\cdot
{}]_{\mathrm{nr}}$ (resp., $[\![{}\cdot{},{}\cdot{}]\!]$) give $(\Lambda
(M)\otimes\mathfrak{X}(M))[1]$ (resp., $\Lambda(M)\otimes\mathfrak{X}(M)$) the
structure of a graded Lie algebra. In particular, $[{}\cdot{},{}\cdot
{}]_{\mathrm{nr}}$ (resp., $[\![{}\cdot{},{}\cdot{}]\!]$) satisfies a suitable
graded Jacobi identity.

\begin{theorem}
Let $\omega\in\Lambda(M)$, and $X,Y,Z\in\Lambda(M)\otimes\mathfrak{X}(M)$ be
homogeneous elements. The following formulas hold
\begin{align}
i_{\omega Z}  &  =\omega\,i_{Z},\nonumber\\
L_{\omega Z}  &  =\omega L_{Z}+(-)^{\omega+Z}d\omega\,i_{Z},\label{f4}\\
\lbrack i_{Z},d]  &  =L_{Z},\nonumber\\
\lbrack i_{Z},L_{Y}]  &  =L_{i_{Z}Y}-(-)^{Y}i_{[\![Z,Y]\!]},\label{f5}\\
\lbrack Z,Y]_{\mathrm{nr}}  &  =i_{Z}Y-(-)^{(Z-1)(Y-1)}i_{Y}Z,\nonumber\\
\lbrack\omega Z,Y]_{\mathrm{nr}}  &  =\omega\lbrack Z,Y]_{\mathrm{nr}%
}-(-)^{(\omega+Z-1)(Y-1)}(i_{Y}\omega)Z,\label{f6}\\
\lbrack\![\omega Z,Y]\!]  &  =\omega\lbrack\![Z,Y]\!]-(-)^{(\omega+Z)Y}%
(L_{Y}\omega)Z+(-)^{\omega+Z}d\omega\,i_{Z}Y,\label{f2}\\
i_{X}[\![Z,Y]\!]  &  =[\![i_{X}Z,Y]\!]+(-)^{(X-1)Z}[\![Z,i_{X}Y]\!]+(-)^{Z}%
i_{[\![X,Z]\!]}Y-(-)^{Y(Z-1)}i_{[\![X,Y]\!]}Z, \label{f3}%
\end{align}
and
\begin{align}
&  [\![X,[Z,Y]_{\mathrm{nr}}]\!]\nonumber\\
&  =[[\![X,Z]\!],Y]_{\mathrm{nr}}+(-)^{X(Z-1)}\left(
[Z,[\![X,Y]\!]]_{\mathrm{nr}}-[\![i_{Z}X,Y]\!]\right)  +(-)^{(X+Z-1)(Y-1)}%
[\![i_{Y}X,Z]\!]. \label{f1}%
\end{align}

\end{theorem}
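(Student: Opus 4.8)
The plan is to reduce every identity in the statement to the behaviour of graded derivations of $\Lambda(M)$ on algebra generators. Two facts make this work. First, a graded derivation of $\Lambda(M)$ is a local operator, and $\Lambda(M)$ is generated locally, as a graded algebra, by $C^{\infty}(M)$ together with the exact one-forms $df$; hence a graded derivation is completely determined by its restriction to $C^{\infty}(M)$ and by its values on the $df$'s, and to prove that two derivations agree it suffices to check this on functions and on exact one-forms. Second, by the previous theorem every graded derivation of $\Lambda(M)$ decomposes \emph{uniquely} as $i_{Z}+L_{Y}$, which lets one read off separately its ``insertion part'' $Z$ and its ``Lie-derivative part'' $Y$. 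Since $Z\mapsto i_{Z}$ and $Z\mapsto L_{Z}$ are injective, identities among form-valued vector fields translate into identities among derivations as well. I would establish the purely algebraic formulas first and then bootstrap the bracket identities from them; throughout, the only real work is sign bookkeeping.

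First I would dispatch $i_{\omega Z}=\omega\,i_{Z}$ and $[i_{Z},d]=L_{Z}$. For the first: both sides are $C^{\infty}(M)$-linear graded derivations, both annihilate functions, and on $df$ one has $i_{\omega Z}(df)=(\omega Z)(f)=\omega\,Z(f)=\omega\,i_{Z}(df)$. For the second: the graded Jacobi identity for the commutator bracket of derivations together with $d^{2}=0$ gives that $[i_{Z},d]$ commutes in the graded sense with $d$, and $[i_{Z},d]f=i_{Z}\,df=Z(f)$; so $[i_{Z},d]=L_{Z}$ by the uniqueness clause of the previous theorem. Formula (\ref{f4}) then follows by writing $L_{\omega Z}=[i_{\omega Z},d]=[\omega\,i_{Z},d]$ and expanding the commutator with the graded Leibniz rule for $d$ over the wedge product; the term $(-)^{\omega+Z}d\omega\,i_{Z}$ is precisely the correction produced when $d$ jumps across $\omega$.

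Next I would turn to the key formula (\ref{f5}). Since $[i_{Z},L_{Y}]$ is a graded derivation of $\Lambda(M)$, I write it as $i_{Z'}+L_{Y'}$ and pin down the two components. Bracketing with $d$ and using $[L_{Y},d]=0$, $[i_{Z},d]=L_{Z}$, $[L_{Z},L_{Y}]=L_{[\![Z,Y]\!]}$ and the graded Jacobi identity shows that $[[i_{Z},L_{Y}],d]$ is an $L$-type derivation, which forces $Z'$ to be a sign times $[\![Z,Y]\!]$; evaluating $[i_{Z},L_{Y}]$ on a function $f$, with $i_{Z}f=0$ and $L_{Y}f=Y(f)$, gives $[i_{Z},L_{Y}]f=i_{Z}(Y(f))=(i_{Z}Y)(f)$, so $Y'=i_{Z}Y$, yielding (\ref{f5}). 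The Nijenhuis-Richardson formulas are then handled the same way: $[Z,Y]_{\mathrm{nr}}=i_{Z}Y-(-)^{(Z-1)(Y-1)}i_{Y}Z$ drops out of $i_{[Z,Y]_{\mathrm{nr}}}=[i_{Z},i_{Y}]$ by evaluating on $df$, and the homogeneity formula in (\ref{f6}) follows from $i_{\omega Z}=\omega\,i_{Z}$ and the graded Leibniz rule for the derivation $i_{Y}$. For (\ref{f2}) I would expand $L_{[\![\omega Z,Y]\!]}=[L_{\omega Z},L_{Y}]$ using (\ref{f4}), then apply (\ref{f5}), $[L_{Y},d]=0$ and the Leibniz rule; the point is that the insertion-type terms which appear combine into exactly the correction term that (\ref{f4}) prescribes for the right-hand side, leaving a clean $L_{(\cdot)}$. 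Formula (\ref{f3}) follows by the same re-packaging, combining (\ref{f5}), (\ref{f6}) and the graded Jacobi identity for suitable triples of $i$'s and $L$'s.

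The hard part will be the last identity (\ref{f1}), the mixed Jacobi identity relating the Fr\"{o}licher-Nijenhuis and Nijenhuis-Richardson brackets. My plan is to translate both sides into nested commutators of $i$'s, $L$'s and $d$ on $\Lambda(M)$: using $L_{W}=[i_{W},d]$ one rewrites $L_{[Z,Y]_{\mathrm{nr}}}=[[i_{Z},i_{Y}],d]$, and $[\![X,-]\!]$ corresponds to $[L_{X},-]$, so the left-hand side of (\ref{f1}) becomes $[L_{X},[[i_{Z},i_{Y}],d]]$; repeated application of the graded Jacobi identity and $d^{2}=0$ expands this into a sum of terms, each of which I then re-identify, via (\ref{f5}) and (\ref{f3}), with one of the Fr\"{o}licher-Nijenhuis or Nijenhuis-Richardson expressions on the right-hand side. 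No new idea beyond the generators principle and the formulas already proved is needed; the difficulty is entirely in the sign bookkeeping and in choosing the order of Jacobi expansions so that the terms group correctly. (Alternatively, one may simply appeal to the classical treatments in \cite{fn56} and \cite{m08}, where all of (\ref{f4})--(\ref{f1}) are proved.)
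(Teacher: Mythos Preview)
Your approach is correct and is in fact the standard one used in the references the paper cites. Note, however, that the paper itself gives no proof of this theorem: it merely states the formulas and refers the reader to \cite{fn56} and \cite{m08} for details, so there is nothing to compare against beyond what you already acknowledge in your final parenthetical remark. Your strategy of checking derivation identities on the generators $f$ and $df$, combined with the unique decomposition $\Delta=i_{Z}+L_{Y}$, is exactly how these identities are established in \cite{m08}; the bootstrapping order you propose (first the elementary identities, then (\ref{f5}), then the bracket formulas by translating into commutators of derivations and invoking graded Jacobi) is the natural one, and your treatment of (\ref{f1}) via $[L_{X},[[i_{Z},i_{Y}],d]]$ is sound.
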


Below, I will often use formulas in the above theorem, sometimes without any comment.

\begin{remark}
\label{RemAlg}If $(C^\infty (M),L)$ is the Lie-Rinehart algebra of sections of a Lie
algebroid over $M$, then there is an analogue of the Fr\"{o}licher-Nijenhuis
calculus on $\Lambda^{\bullet}L^{\ast}\otimes L$. In particular, elements
$Z\in\Lambda^{\bullet}L^{\ast}\otimes L$ determine derivations $i_{Z}$,
$L_{Z}$ of $\Lambda^{\bullet}L^{\ast}$, satisfying analogues of formulas
(\ref{f4})--(\ref{f1}) (see, for instance, \cite{g13}).
\end{remark}

\section{Homological Algebra of Foliations\label{HAF}}

Let $M$ be a smooth manifold and $C$ an involutive $n$-dimensional
distribution on it. Let $%
\mathscr{F}%
$ be the integral foliation of $C$. I will denote by $C\mathfrak{X}$ the
submodule of $\mathfrak{X}(M)$ made of vector fields in $C$. Moreover,
following A.{} Vinogradov (and his school) \cite{v84,v98,v01,b...99}, I will
denote by $C\Lambda^{1}:=C\mathfrak{X}^{\bot}\subset\Lambda^{1}(M)$ the
annihilator of $C\mathfrak{X}$. Put
\begin{align*}
\overline{\mathfrak{X}}  &  :=\mathfrak{X}(M)/C\mathfrak{X},\\
\overline{\Lambda}{}^{1}  &  :=\Lambda^{1}(M)/C\Lambda^{1}.
\end{align*}
Then $C\Lambda^{1}\simeq\overline{\mathfrak{X}}{}^{\ast}$ and $\overline
{\Lambda}{}^{1}\simeq C\mathfrak{X}^{\ast}$. In view of the Fr\"{o}benius
theorem, there always exist coordinates $\ldots,x^{i},\ldots,u^{\alpha}%
,\ldots$, $i=1,\ldots,n$, $\alpha=1,\ldots,\dim M-n$, adapted to $C$, i.e.,
such that $C\mathfrak{X}$ is locally spanned by $\ldots,\partial_{i}%
:=\partial/\partial x^{i},\ldots$ and $C\Lambda^{1}$ is locally spanned by
$\ldots,du^{\alpha},\ldots$. Consider the Chevalley-Eilenberg DG algebra
$(\overline{\Lambda},\overline{d})$ associated to the Lie algebroid
$C\mathfrak{X}$, i.e., $\overline{\Lambda}$ is the exterior algebra of
$\overline{\Lambda}{}^{1}$ and
\begin{align*}
&  (\overline{d}\lambda)(X_{1},\dots,X_{k+1})\\
&  =\sum_{i}(-)^{i+1}X_{i}(\lambda(\ldots,\widehat{X}_{i},\ldots))+\sum
_{i<j}(-)^{i+j}\lambda([X_{i},X_{j}],\ldots,\widehat{X}_{i},\ldots,\widehat
{X}_{j},\ldots)
\end{align*}
where $\lambda\in\overline{\Lambda}{}^{k}$ is understood as a $C^{\infty}%
(M)$-valued, $k$-multilinear, skew-symmetric map on $C\mathfrak{X}$ and
$X_{1},\ldots,X_{k+1}\in C\mathfrak{X}$. The DG algebra $(\overline{\Lambda
},\overline{d})$ is the quotient of $(\Lambda(M),d)$ over the differentially
closed ideal generated by $C\Lambda^{1}$. In particular, it
is generated by degree $0$, and $\overline{d}$-exact degree $1$ elements.

In the following, I write $\omega\longmapsto\overline{\omega}$ the projection
$\Lambda(M)\longrightarrow\overline{\Lambda}$.

The Lie algebroid $C\mathfrak{X}$ acts on $\overline{\mathfrak{X}}$ via the
\emph{Bott connection}. Namely, write $X\longmapsto\overline{X}$ the
projection $\mathfrak{X}(M)\longrightarrow\overline{\mathfrak{X}}$. Then
\[
X{}\cdot{}\overline{Y}:=\overline{[X,Y]}\in\overline{\mathfrak{X}},\quad X\in
C\mathfrak{X},\text{ }Y\in\mathfrak{X}(M).
\]
Accordingly, there is a differential $(\overline{\Lambda},\overline{d}%
)$-module $(\overline{\Lambda}\otimes\overline{\mathfrak{X}},\overline{d})$
whose differential is given by the usual Chevalley-Eilenberg formula:
\begin{align*}
&  (\overline{d}Z)(X_{1},\dots,X_{k+1})\\
&  =\sum_{i}(-)^{i+1}X_{i}\cdot Z(\ldots,\widehat{X}_{i},\ldots)+\sum
_{i<j}(-)^{i+j}Z([X_{i},X_{j}],\ldots,\widehat{X}_{i},\ldots,\widehat{X}%
_{j},\ldots),
\end{align*}
where $Z\in\overline{\Lambda}{}^{k}\otimes\overline{\mathfrak{X}}$ is
understood as a $\overline{\mathfrak{X}}$-valued, $k$-multilinear,
skew-symmetric map on $C\mathfrak{X}$, and $X_{1},\ldots,X_{k+1}\in
C\mathfrak{X}$. The tensor product
\[
\Lambda(M)\otimes\mathfrak{X}(M)\longrightarrow\overline{\Lambda}%
\otimes\overline{\mathfrak{X}}.
\]
of projections $\Lambda(M)\longrightarrow\overline{\Lambda}$ and
$\mathfrak{X}(M)\longrightarrow\overline{\mathfrak{X}}$ will be written
$Z\longmapsto\overline{Z}$.

\begin{remark}
\label{Rema} \label{RemExt}The $\overline{d}$ differentials in $\overline
{\Lambda}$ and $\overline{\mathfrak{X}}$ can be uniquely extended to the whole
tensor algebra%
\[
\bigoplus_{i,j}\overline{\Lambda}\otimes\overline{\mathfrak{X}}{}^{\otimes
i}\otimes(C\Lambda^{1})^{\otimes j},
\]
by requiring Leibniz rules with respect to tensor products and contractions.
\end{remark}

The zeroth cohomology $H^{0}(\overline{\Lambda},\overline{d})$ is made of
functions on $M$ which are constant along the leaves of $C$ and, therefore,
elements in it are naturally interpreted as functions on the \textquotedblleft
space of leaves\textquotedblright. In secondary calculus, one is also
concerned with lower dimensional integral submanifolds of $C$. In this
respect, it is natural to understand the whole
\[
\boldsymbol{C}_{%
\mathscr{F}%
}^{\infty}:=H(\overline{\Lambda},\overline{d})
\]
as algebra of functions over the \textquotedblleft space of integral
manifolds\textquotedblright. Notice that $\boldsymbol{C}_{%
\mathscr{F}%
}^{\infty}$ is nothing but the \emph{leaf-wise (de Rham) cohomology} of $%
\mathscr{F}%
$. The zeroth cohomology $H^{0}(\overline{\Lambda}\otimes\overline
{\mathfrak{X}},\overline{d})$ is made of vector fields on $M$ preserving $C$
(modulo vector fields in $C$) and, therefore, elements in it are naturally
interpreted as vector fields on the \textquotedblleft space of
leaves\textquotedblright. Just as above, it is natural to understand the
whole
\[
\boldsymbol{\mathfrak{X}}_{%
\mathscr{F}%
}:=H(\overline{\Lambda}\otimes\overline{\mathfrak{X}},\overline{d})
\]
as Lie algebra of vector fields over the \textquotedblleft space of integral
manifolds\textquotedblright. The geometric interpretation of cohomologies of
$\overline{d}$ is very fruitful and far reaching \cite{v01,v09}. The following
theorem supports this interpretation.

Denote by $[\theta]$ the cohomology class in either $\boldsymbol{C}_{%
\mathscr{F}%
}^{\infty}$ or $\boldsymbol{\mathfrak{X}}_{%
\mathscr{F}%
}$ of a cocycle $\theta$ in either $\overline{\Lambda}$ or $\overline{\Lambda
}\otimes\overline{\mathfrak{X}}$, respectively.

\begin{theorem}
\label{Theorem1} The pair $(\boldsymbol{C}_{%
\mathscr{F}%
}^{\infty},\boldsymbol{\mathfrak{X}}_{%
\mathscr{F}%
})$ possesses a canonical structure of graded Lie-Rinehart algebra. Namely,

\begin{enumerate}
\item \label{1.1}$\boldsymbol{\mathfrak{X}}_{%
\mathscr{F}%
}$ has a graded Lie algebra structure $\boldsymbol{[}{}\cdot{},{}\cdot
{}\boldsymbol{]}$ given by
\[
\boldsymbol{[}[\overline{Z}_{1}],[\overline{Z}_{2}]\boldsymbol{]}%
:=[\overline{[\![Z_{1},Z_{2}]\!]}]\in\boldsymbol{\mathfrak{X}}_{%
\mathscr{F}%
},\quad\lbrack\overline{Z}_{1}],[\overline{Z}_{2}]\in\boldsymbol{\mathfrak{X}%
}_{%
\mathscr{F}%
},\quad Z_{1},Z_{2}\in\Lambda(M)\otimes\mathfrak{X}(M),
\]

\item \label{2.1}$\boldsymbol{C}_{%
\mathscr{F}%
}^{\infty}$ has a graded Lie module structure over $(\boldsymbol{\mathfrak{X}%
}_{{%
\mathscr{F}%
}},\boldsymbol{[}{}\cdot{},{}\cdot{}\boldsymbol{]})$ given by
\[
\lbrack\overline{Z}]{}\cdot{}[\overline{\lambda}]:=[\overline{L_{Z}\lambda
}]\in\boldsymbol{C}_{%
\mathscr{F}%
}^{\infty},\quad\lbrack\overline{\lambda}]\in\boldsymbol{C}^{\infty},\text{
}[\overline{Z}]\in\boldsymbol{\mathfrak{X}}_{%
\mathscr{F}%
},\quad\lambda\in\Lambda(M),\quad Z\in\Lambda(M)\otimes\mathfrak{X}(M),
\]

\item \label{3.1}$\boldsymbol{\mathfrak{X}}_{%
\mathscr{F}%
}$ has a graded module structure over $\boldsymbol{C}_{%
\mathscr{F}%
}^{\infty}$ given by
\[
\lbrack\overline{\lambda}]{}\cdot{}[\overline{Z}]:=[\overline{\lambda Z}%
]\in\boldsymbol{\mathfrak{X}},\quad\lbrack\overline{\lambda}]\in
\boldsymbol{C}_{%
\mathscr{F}%
}^{\infty},\text{ }[\overline{Z}]\in\boldsymbol{\mathfrak{X}}_{%
\mathscr{F}%
},\quad\lambda\in\Lambda(M),\quad Z\in\Lambda(M)\otimes\mathfrak{X}(M),
\]

\item \label{4.1}$\boldsymbol{\mathfrak{X}}_{%
\mathscr{F}%
}$ acts on $\boldsymbol{C}_{%
\mathscr{F}%
}^{\infty}$ via graded derivations, and the formula
\[
(\boldsymbol{f}\cdot\boldsymbol{X}{}){}\cdot{}\boldsymbol{g}:=\boldsymbol{f}%
\cdot(\boldsymbol{X}{}{}\cdot{}\boldsymbol{g})\boldsymbol{,}%
\]
holds for all $\boldsymbol{f},\boldsymbol{g}\in\boldsymbol{C}_{%
\mathscr{F}%
}^{\infty}$, and $\boldsymbol{X}\in\boldsymbol{\mathfrak{X}}_{%
\mathscr{F}%
}$,

\item \label{5.1}the formula
\[
\boldsymbol{[X},\boldsymbol{fY]}=(\boldsymbol{X}{}\cdot{}\boldsymbol{f}%
){}\cdot{}\boldsymbol{Y}+(-)^{\boldsymbol{Xf}}\boldsymbol{f}\cdot
\boldsymbol{{}[X},\boldsymbol{Y]},
\]
holds for all $\boldsymbol{X},\boldsymbol{Y}\in\boldsymbol{\mathfrak{X}}_{%
\mathscr{F}%
}$, $\boldsymbol{f}\in\boldsymbol{C}_{%
\mathscr{F}%
}^{\infty}$.
\end{enumerate}
\end{theorem}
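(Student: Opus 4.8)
The plan is to deduce Theorem \ref{Theorem1} from the existence of a strong homotopy Lie--Rinehart structure on the characteristic cochains, together with the general principle that an $LR_{\infty}[1]$-algebra induces an honest graded Lie--Rinehart algebra on the cohomology of its unary bracket. Concretely, once one knows (this is the content of Section \ref{SHLRF}) that the pair $(\overline{\Lambda},\overline{\Lambda}\otimes\overline{\mathfrak{X}})$ carries an $LR_{\infty}[1]$-algebra structure whose unary operations are, up to the customary suspension and signs, $\overline{d}$, whose binary bracket on $\overline{\Lambda}\otimes\overline{\mathfrak{X}}$ is induced by the Fr\"{o}licher--Nijenhuis bracket $[\![{}\cdot{},{}\cdot{}]\!]$, whose binary anchor is induced by the Lie derivative $L_{Z}$, and which is $\overline{\Lambda}$-linear in the appropriate entries and compatible with the module structure $Z\mapsto\omega Z$, the theorem follows by passing to $\overline{d}$-cohomology. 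I would organize the argument in the steps below, isolating what is formal from what genuinely uses the construction of Section \ref{SHLRF}.

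First I would record the purely formal consequences, which do not involve the higher brackets at all. Since $(\overline{\Lambda},\overline{d})$ is a differential graded commutative algebra, $\boldsymbol{C}^{\infty}_{\mathscr{F}}=H(\overline{\Lambda},\overline{d})$ is a graded commutative algebra; since $(\overline{\Lambda}\otimes\overline{\mathfrak{X}},\overline{d})$ is a differential graded module over it, $\boldsymbol{\mathfrak{X}}_{\mathscr{F}}=H(\overline{\Lambda}\otimes\overline{\mathfrak{X}},\overline{d})$ is a graded $\boldsymbol{C}^{\infty}_{\mathscr{F}}$-module, which is item \ref{3.1}; and since each $L_{Z}$ is a graded derivation of $\Lambda(M)$ commuting with $d$, the induced operators on $\boldsymbol{C}^{\infty}_{\mathscr{F}}$ act by graded derivations, giving the first half of item \ref{4.1}. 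The identity $(\boldsymbol{f}\cdot\boldsymbol{X})\cdot\boldsymbol{g}=\boldsymbol{f}\cdot(\boldsymbol{X}\cdot\boldsymbol{g})$ of item \ref{4.1} then follows from formula (\ref{f4}), $L_{\omega Z}=\omega L_{Z}+(-)^{\omega+Z}d\omega\,i_{Z}$: for a cocycle $\overline{\lambda}$ one has $d\lambda\in C\Lambda^{1}\wedge\Lambda(M)=\ker(\Lambda(M)\to\overline{\Lambda})$, so the correction term $d\lambda\,i_{Z}\mu$ projects to $0$ in $\overline{\Lambda}$ and only $\overline{\lambda L_{Z}\mu}$ survives.

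Next comes the bracket. Well-definedness of $\boldsymbol{[}[\overline{Z}_{1}],[\overline{Z}_{2}]\boldsymbol{]}:=[\overline{[\![Z_{1},Z_{2}]\!]}]$ and of the anchor $[\overline{Z}]\cdot[\overline{\lambda}]:=[\overline{L_{Z}\lambda}]$ — that is, that $\overline{[\![Z_{1},Z_{2}]\!]}$ and $\overline{L_{Z}\lambda}$ are $\overline{d}$-closed for cocycle representatives and their classes depend only on the classes of $\overline{Z}_{1},\overline{Z}_{2},\overline{\lambda}$ — is precisely the assertion that the binary operations of the cochain-level $LR_{\infty}[1]$-algebra are chain maps, which is part of the $k=2$ Jacobiator identities $J^{2}=0$ for that structure. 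Granting this, item \ref{1.1} (graded Jacobi identity for $\boldsymbol{[}{}\cdot{},{}\cdot{}\boldsymbol{]}$) and item \ref{2.1} (compatibility of the bracket with the action) hold on cohomology because the corresponding defects at cochain level are $\overline{d}$-exact, by the $k=3$ Jacobiator identities — equivalently, directly, because $[\![{}\cdot{},{}\cdot{}]\!]$ already satisfies a graded Jacobi identity on $\Lambda(M)\otimes\mathfrak{X}(M)$ and $[L_{Z_{1}},L_{Z_{2}}]=L_{[\![Z_{1},Z_{2}]\!]}$. Finally, the Leibniz rule of item \ref{5.1} is the cohomological shadow of formula (\ref{f2}), $[\![\omega Z,Y]\!]=\omega[\![Z,Y]\!]-(-)^{(\omega+Z)Y}(L_{Y}\omega)Z+(-)^{\omega+Z}d\omega\,i_{Z}Y$: for $\overline{\lambda}$ a cocycle the last term projects to $0$ in $\overline{\Lambda}\otimes\overline{\mathfrak{X}}$, the middle term becomes $(\boldsymbol{X}\cdot\boldsymbol{f})\cdot\boldsymbol{Y}$, and the first term becomes $(-)^{\boldsymbol{Xf}}\boldsymbol{f}\cdot\boldsymbol{[}\boldsymbol{X},\boldsymbol{Y}\boldsymbol{]}$ after passing to classes.

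The step I expect to be the real obstacle — and the reason I would route the argument through Section \ref{SHLRF} rather than chase ideals by hand — is exactly the claim just granted: that the Fr\"{o}licher--Nijenhuis bracket and the Lie derivative descend to well-defined operations on $\overline{d}$-cohomology. There is no canonical lift of $\overline{d}$ to $\Lambda(M)\otimes\mathfrak{X}(M)$ compatible with $[\![{}\cdot{},{}\cdot{}]\!]$, so the naive verification runs into an ideal (generated by $C\Lambda^{1}$) that $L_{Z}$ and $i_{Z}$ do not preserve. The clean way to obtain the compatibility is to fix a distribution $V$ complementary to $C$, pass to the resulting bigrading of $\Lambda(M)$ and of $\Lambda(M)\otimes\mathfrak{X}(M)$ — in which involutivity of $C$ forces the ``curvature'' component of $d$ to vanish, so that $\overline{d}$ is literally a component of an honest Fr\"{o}licher--Nijenhuis-type differential — and then invoke the derivation property of that differential with respect to $[\![{}\cdot{},{}\cdot{}]\!]$ and formulas (\ref{f2})--(\ref{f1}). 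This is precisely the computation packaged once and for all in the $LR_{\infty}[1]$-algebra of Section \ref{SHLRF}; and since that structure is itself independent of $V$ up to canonical isomorphism (Section \ref{CS}), the graded Lie--Rinehart algebra it induces on cohomology, namely the one in the statement, is canonical.
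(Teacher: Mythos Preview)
Your proposal is correct and follows the same route as the paper: Theorem~\ref{Theorem1} is not proved directly but is deduced from the $LR_{\infty}[1]$-algebra structure on $(\overline{\Lambda},\overline{\Lambda}\otimes\overline{\mathfrak{X}}[1])$ constructed in Section~\ref{SHLRF}, whose unary bracket is $\overline{d}$ and whose binary operations reduce in cohomology to the Fr\"{o}licher--Nijenhuis bracket and Lie derivative (cf.\ also Appendix~\ref{Ap4}). You supply more of the explicit descent arguments via formulas~(\ref{f4}) and~(\ref{f2}) than the paper, which simply records in the ``Moreover'' clause of the subsequent theorem that the cochain-level structure induces the one in Theorem~\ref{Theorem1}; one small terminological slip is that the ``curvature'' in the paper's bigrading refers to $R$ (the non-involutivity of the complement $V$, giving $d_2=-i_R$), whereas what involutivity of $C$ kills is the would-be bidegree $(-1,2)$ component of $d$.
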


In Section \ref{SHLRF}, I show that the graded Lie-Rinehart algebra of Theorem
\ref{Theorem1} actually comes from an $LR_{\infty}[1]$\emph{-algebra}
structure on $(\overline{\Lambda},\overline{\Lambda}\otimes\overline
{\mathfrak{X}}[1])$ (see also \cite{h05,j12}), according to the following

\begin{theorem}
The pair $(\overline{\Lambda},\overline{\Lambda}\otimes\overline{\mathfrak{X}%
}[1])$ possesses a structure of $LR_{\infty}[1]$-algebra. Namely,

\begin{enumerate}
\item \label{1.}$\overline{\Lambda}\otimes\overline{\mathfrak{X}}[1]$ has an
$L_{\infty}[1]$-algebra structure, denoted $\mathscr{L}=\{\{{}\cdot{}%
,\cdots,{}\cdot{}\}_{k},\ k\in\mathbb{N}\}$, such that $\{{}\cdot{}%
\}_{1}=\overline{d}$,

\item \label{2.}$\overline{\Lambda}$ has an $L_{\infty}[1]$-module structure
over $(\overline{\Lambda}\otimes\overline{\mathfrak{X}}[1],\mathscr{L})$,
denoted $\{\{{}\cdot{}{},\cdots,{}\cdot{}|{}\cdot{}\}_{k},\ k\in\mathbb{N}\}$,
such that $\{|{}\cdot{}{}\}_{1}=\overline{d}$,

\item \label{3.}$\overline{\Lambda}\otimes\overline{\mathfrak{X}}[1]$ has a
graded $\overline{\Lambda}$-module structure,

\item \label{4.}$\{{}\cdot{}{},\cdots,{}\cdot{}|{}\cdot{}\}_{k}:(\overline
{\Lambda}\otimes\overline{\mathfrak{X}}[1])^{k-1}\times\overline{\Lambda
}\longrightarrow\overline{\Lambda}$ is a derivation in the last argument and
it is $\overline{\Lambda}$-multilinear in the first $k$ entries,
$k\in\mathbb{N}$,

\item \label{5.}the formula
\[
\{Z_{1},\ldots,Z_{k-1},\lambda Z_{k}\}=\{Z_{1},\ldots,Z_{k-1}|\lambda\}{}%
\,{}Z_{k}+(-)^{\lambda\boldsymbol{(}Z_{1}+\cdots+Z_{k-1}+1)}\lambda
\{Z_{1},\ldots,Z_{k-1},Z_{k}\}
\]
holds for all $Z_{1},\ldots,Z_{k-1},Z_{k}\in\overline{\Lambda}\otimes
\overline{\mathfrak{X}}$, $\lambda\in\overline{\Lambda}$, and $k\in\mathbb{N}$.
\end{enumerate}

Moreover, structures in \ref{1.}, \ref{2.}, \ref{3.}, induce structures in
\ref{1.1}, \ref{2.1}, \ref{3.1} of Theorem \ref{Theorem1} in cohomology, up to
a sign (due to the chosen sign conventions), and properties \ref{4.}, \ref{5.}
imply properties \ref{4.1}, \ref{5.1} of Theorem \ref{Theorem1}, respectively.
\end{theorem}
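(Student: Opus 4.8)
The plan is to build the $LR_{\infty}[1]$-algebra from the ordinary de Rham differential of $M$ by transporting it through a concrete algebra isomorphism, and then to read off the low operations. First I would fix a distribution $V$ complementary to $C$, so that $TM=C\oplus V$ and, dually, $\Lambda^{1}(M)=\overline{\Lambda}{}^{1}\oplus C\Lambda^{1}$ with $\overline{\Lambda}{}^{1}$ identified with the annihilator $V^{\bot}$. Taking exterior algebras and using $C\Lambda^{1}\simeq\overline{\mathfrak{X}}{}^{\ast}$ together with Remark \ref{15} (applied with $A=\overline{\Lambda}$, $A_{0}=C^{\infty}(M)$, $Q=\overline{\mathfrak{X}}$), one obtains an isomorphism of graded commutative, bigraded algebras
\[
\mathrm{Sym}_{\overline{\Lambda}}(\overline{\Lambda}\otimes\overline{\mathfrak{X}}[1],\overline{\Lambda})\;\simeq\;\overline{\Lambda}\otimes_{C^{\infty}(M)}\Lambda_{C^{\infty}(M)}^{\bullet}C\Lambda^{1}\;\simeq\;\Lambda(M),
\]
under which forms in $C\Lambda^{1}$ have bidegree $(1,0)$, leafwise $1$-forms have bidegree $(0,1)$, and the total degree is the form degree. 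Under this identification $d$ is a degree $1$ derivation with $d^{2}=0$, and its bihomogeneous components $d_{k}$ (of bidegree $(k,1-k)$) vanish for $k>2$, since $d$ lowers the leafwise degree of any form by at most one; hence $(\Lambda(M),\{d_{0},d_{1},d_{2}\})$ is a multi-differential algebra. Corollary \ref{Cor} (equivalently Theorem \ref{23}) then endows $(\overline{\Lambda},\overline{\Lambda}\otimes\overline{\mathfrak{X}}[1])$ with an $LR_{\infty}[1]$-algebra structure. This establishes the existence, and, being an $LR_{\infty}[1]$-algebra over $\overline{\Lambda}$, it gives items \ref{3.}, \ref{4.}, \ref{5.} for free, since those are part of Definition \ref{defLRalg}; note also that $d_{k}=0$ for $k>2$ means this is in fact a quasi Lie-Rinehart algebra in the sense of Remark \ref{Hue2}.

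Next I would identify the unary operations. Under the above isomorphism $d_{0}$ restricts to the leafwise differential $\overline{d}$ on $\overline{\Lambda}$ and, on the sector $C\Lambda^{1}=\overline{\mathfrak{X}}{}^{\ast}$, to the Chevalley--Eilenberg differential of the Bott connection. Feeding this into the explicit recipe of the proof of Theorem \ref{23}, and using the Leibniz compatibility of the various $\overline{d}$'s recorded in Remark \ref{RemExt}, one finds that the first anchor $\{|\,\cdot\,\}_{1}$ on $\overline{\Lambda}$ is $\overline{d}$ and the first bracket $\{\,\cdot\,\}_{1}$ on $\overline{\Lambda}\otimes\overline{\mathfrak{X}}$ is the differential of the DG $\overline{\Lambda}$-module $(\overline{\Lambda}\otimes\overline{\mathfrak{X}},\overline{d})$ of Section \ref{HAF}. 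This proves items \ref{1.} and \ref{2.}.

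For the ``Moreover'' part, since $\{\,\cdot\,\}_{1}=\{|\,\cdot\,\}_{1}=\overline{d}$, the brackets and anchors descend to operations on $\boldsymbol{C}_{\mathscr{F}}^{\infty}=H(\overline{\Lambda},\overline{d})$ and $\boldsymbol{\mathfrak{X}}_{\mathscr{F}}[1]=H(\overline{\Lambda}\otimes\overline{\mathfrak{X}},\overline{d})[1]$; the $\overline{\Lambda}$-module structure descends to the $\boldsymbol{C}_{\mathscr{F}}^{\infty}$-module structure on $\boldsymbol{\mathfrak{X}}_{\mathscr{F}}$, so item \ref{3.} implies item \ref{3.1}, and the $\overline{\Lambda}$-multilinearity \ref{4.} and the Leibniz rule \ref{5.} specialize, in the binary case, to \ref{4.1} and \ref{5.1}. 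To get \ref{1.}$\Rightarrow$\ref{1.1} and \ref{2.}$\Rightarrow$\ref{2.1} I would make the binary operations explicit: writing $d_{1}$ and $d_{2}$, hence $\{\,\cdot\,,\cdot\,\}_{2}$ and $\{\,\cdot\,|\,\cdot\,\}_{2}$, as derived brackets of $d=d_{0}+d_{1}+d_{2}$ and using that every derivation of $\Lambda(M)$ has the form $i_{Z}+L_{Y}$ (Section \ref{FVVF}), the identities (\ref{f4})--(\ref{f1})---in particular $[i_{Z},d]=L_{Z}$, $[i_{Z},L_{Y}]=L_{i_{Z}Y}-(-)^{Y}i_{[\![Z,Y]\!]}$ and $i_{\omega Z}=\omega\,i_{Z}$---yield, modulo images of $\overline{d}$ and terms vanishing on $\overline{d}$-cocycles, $\{\overline{Z}_{1},\overline{Z}_{2}\}_{2}=\pm\,\overline{[\![Z_{1},Z_{2}]\!]}$ and $\{\overline{Z}\,|\,\overline{\lambda}\}_{2}=\pm\,\overline{L_{Z}\lambda}$. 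Passing to cohomology, these reproduce the bracket and the action of Theorem \ref{Theorem1}, up to the sign forced by the $L_{\infty}$ versus $L_{\infty}[1]$ convention.

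Once the isomorphism $\mathrm{Sym}_{\overline{\Lambda}}(\overline{\Lambda}\otimes\overline{\mathfrak{X}}[1],\overline{\Lambda})\simeq\Lambda(M)$ is in place the existence is essentially immediate, so the main obstacle is the ``Moreover'' part: deriving usable closed formulas for the binary (and, for completeness, ternary) operations and checking that in cohomology they coincide with the Fr\"{o}licher--Nijenhuis bracket and the Lie derivative \emph{with the right signs}. This requires careful bookkeeping of the Fr\"{o}licher--Nijenhuis calculus of Section \ref{FVVF} and of the $L_{\infty}[1]$ sign conventions, and is the step I expect to be the most delicate.
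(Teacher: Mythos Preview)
Your proposal is correct and follows essentially the same route as the paper: fix a complement $V$, use Remark \ref{15} to identify $\mathrm{Sym}_{\overline{\Lambda}}(\overline{\Lambda}\otimes\overline{\mathfrak{X}}[1],\overline{\Lambda})\simeq\Lambda(M)$, decompose the de Rham differential as $d=d_{0}+d_{1}+d_{2}$, and apply Corollary \ref{Cor}; then read off the unary and binary operations via Fr\"{o}licher--Nijenhuis calculus to recover $\overline{d}$, $\overline{[\![{\cdot},{\cdot}]\!]}$ and $\overline{L_{{\cdot}}{\cdot}}$ up to sign. The only point where you are slightly sketchier than the paper is the explicit computation of the binary (and ternary) operations---the paper carries these out in full in Section \ref{SHLRF} (with the ``projected'' formulas $\{Z_{1},Z_{2}\}=-(-)^{Z_{1}}\overline{[\![Z_{1},Z_{2}]\!]}$, $\{Z|\lambda\}=-(-)^{Z}\overline{L_{Z}\lambda}$ appearing in Appendix \ref{Ap4})---but your outline of how to obtain them is accurate.
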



When I was finalizing the first version of this paper, Chen, Sti\'{e}non, and
Xu published an e-print \cite{csx12} were they present similar results in a
much wider context. Namely, they consider what they call a \emph{Lie pair},
i.e., a Lie algebroid $L$ (generalizing $TM$ in this paper) with a Lie
subalgebroid $L_{1}$ (generalizing $C$ in this paper). There is a differential
in $\Lambda^{\bullet}L_{1}^{\ast}\otimes L/L_{1}$ (generalizing the
differential $\overline{d}$ in $\overline{\Lambda}\otimes\overline
{\mathfrak{X}}$) and a Lie algebra structure on cohomology. The authors of
\cite{csx12} prove that such a Lie algebra comes from a SH Leibniz algebra
structure on $\Lambda^{\bullet}L_{1}^{\ast}\otimes L/L_{1}$ defined by means
of: 1) a splitting of the inclusion of modules $L_{1}\subset L$ and 2) an
$L$-connection extending the canonical $L_{1}$-connection in $L/L_{1}$.
Finally, such an SH\ Leibniz algebra is a genuine $L_{\infty}[1]$-algebra if
the inclusion $L_{1}\subset L$ is split via another Lie subalgebroid. They
prove similar results for a general $L_{1}$-module. Obviously, their framework
encompasses mine. However, their results do not encompass mine for many
reasons: i) the SH structure $\mathscr{Q}$ on $\overline{\Lambda}%
\otimes\overline{\mathfrak{X}}$ described in this paper is always a true
$L_{\infty}[1]$-algebra, and not just a SH Leibniz algebra; ii) I define
$\mathscr{Q}$ by just a splitting of the inclusion $C\mathfrak{X}%
\subset\mathfrak{X}(M)$, and then prove that it is independent of the
splitting up to isomorphisms; iii) the SH structure $\mathscr{Q}$
possesses only one higher homotopy (a third level one); iv) when the splitting
is made via another involutive distribution, $(\overline{\Lambda}%
\otimes\overline{\mathfrak{X}},\mathscr{Q})$ actually becomes a DG Lie algebra
(the higher homotopy vanishes) up to a sign (due to the chosen sign
conventions); v) I also discuss the question: where does the structure of a
Lie-Rinehart algebra on $(\boldsymbol{C}_{%
\mathscr{F}%
}^{\infty},\boldsymbol{\mathfrak{X}}_{%
\mathscr{F}%
})$ comes from? And not only the questions: where does the Lie algebra (resp.,
Lie module) structure on $\boldsymbol{\mathfrak{X}}_{%
\mathscr{F}%
}$ (resp., $\boldsymbol{C}
_{%
\mathscr{F}%
}^{\infty}$) comes from?

I have to mention also that, while I was preparing a revised version of this
paper (already published as e-print arXiv:1204.2467v1), Ji published an
e-print \cite{j12} were he basically presents, among other things, (part of)
my results, in the already mentioned wider context of \cite{csx12}. Ji's main
aim is to discuss deformations of Lie pairs (in the sense of \cite{csx12}).
Notice that, i) he defines only the $L_{\infty}[1]$-algebra structure
determined by a Lie pair, and not the $LR_{\infty}[1]$-algebra structure; ii)
he does this via T.{} Voronov's derived bracket formalism \cite{v05}, however
(it is not hard to see that) his $L_{\infty}[1]$-algebra coincides with the
one of Section \ref{SHLRF} (see the first appendix); ii) he does not discuss
the dependence on the choice of the above mentioned splitting. Notice also
that the methods of this paper, including the use of the
Fr\"{o}licher-Nijenhuis calculus (see Remark \ref{RemAlg}), can be
straightforwardly generalized to the case of a Lie pair more general than an
involutive distribution. However, I prefer to stay on the latter case. Indeed,
it is the relevant one for applications in secondary calculus which is the
ultimate goal of the paper.

Finally, I stress again that the existence of a quasi Lie-Rinehart algebra
associated to a Lie pair had been already proved by Huebschmann in 2005
\cite{h05}. The quasi Lie-Rinehart algebra of Huebschmann coincides with the
$LR_{\infty}[1]$-algebra in Section \ref{SHLRF}. However, I decided to present
again its derivation in this paper for various reasons. From an algebraic
point of view, for the reasons already discussed in Remarks \ref{Hue1},
\ref{Hue2}. From a geometric point of view, because 1) the presentation in
terms of Fr\"{o}licher-Nijenhuis calculus is somewhat more explicit and easy
to work with, 2) I complement it with a proof of canonicity (see Section
\ref{CS}), 3) I relate it to the work of Oh and Park \cite{op05} (see Section
\ref{SHLRPSF}).

\section{{Adding a Complementary Distribution to an Involutive Distribution}%
\label{GSF}}

The exact sequence
\[
0\longrightarrow C\mathfrak{X}\longrightarrow\mathfrak{X}(M)\longrightarrow
\overline{\mathfrak{X}}\longrightarrow0
\]
splits. The datum of a splitting is equivalent to the datum of a distribution
$V$ complementary to $C$. From now on, fix such a distribution. I will always
identify $\overline{\mathfrak{X}}$ (resp., $\overline{\Lambda}{}^{1}$) with
the corresponding submodule in $\mathfrak{X}(M)$ (resp., $\Lambda^{1}(M)$)
determined by $V$. The triple $(C^{\infty}(M), C\mathfrak{X}, \overline
{\mathfrak{X}})$ is actually a special instance of Huebschmann's
\emph{Lie-Rinehart triple} \cite{h05}.

Let $C\Lambda:=\bigoplus_{k}C\Lambda^{k}\subset\Lambda(M)$ be the $C^{\infty
}(M)$-subalgebra generated by $C\Lambda^{1}$, $C\Lambda^{k}:=\Lambda
^{k}C\Lambda^{1}$. Then
\begin{equation}
\Lambda(M)\simeq\Lambda^{\bullet}\Lambda^{1}(M)\simeq\Lambda^{\bullet
}(\overline{\Lambda}{}^{1}\oplus C\Lambda^{1})\simeq\overline{\Lambda}\otimes
C\Lambda. \label{Fact}%
\end{equation}
In the following, I will identify $\Lambda(M)$ and $\overline{\Lambda}\otimes
C\Lambda$. The transversal distribution $V\simeq TM/C$ is locally spanned by
vector fields $\ldots,V_{\alpha},\ldots$ of the form $V_{\alpha}%
:=\partial/\partial u^{\alpha}+V_{\alpha}^{i}\partial_{i}$, $\alpha
=1,\ldots\dim M-n$, for some local functions $\ldots,V_{\alpha}^{i},\ldots$,
and its annihilator $V^{\bot}\simeq C^{\ast}$ is locally spanned by
differential forms $\ldots,d^{C}\!x^{i}:=dx^{i}-V_{\alpha}^{i}du^{\alpha
},\ldots$.

Denote by $P^{C},P^{V}\in\Lambda^{1}(M)\otimes\mathfrak{X}(M)$ the projectors
onto $C,V$, and by $d^{C},d^{V}$ Lie derivatives of differential forms along
them, respectively (this is consistent with our previous notations). The form
valued vector field $P^{C}\ $belongs to $\overline{\Lambda}{}^{1}\otimes
C\mathfrak{X}$, and it is locally given by
\[
P^{C}=d^{C}\!x^{i}\otimes\partial_{i}.
\]
Similarly, $P^{V}\in C\Lambda^{1}\otimes\overline{\mathfrak{X}}$, and it is
locally given by
\[
P^{V}=du^{\alpha}\otimes V_{\alpha}.
\]
By definition, $P^{C}+P^{V}=\mathbb{I}$, the identity in $\Lambda
^{1}(M)\otimes\mathfrak{X}(M)$, so that $d^{C}+d^{V}=d$. The \emph{curvature}
of the splitting $V$ is, by definition,
\[
R:=\dfrac{1}{2}[\![P^{C},P^{C}]\!]\in\Lambda^{2}(M)\otimes\mathfrak{X}(M).
\]
It is easy to see that $R\in C\Lambda^{2}\otimes C\mathfrak{X}$. Moreover,
$P^{C}$, $P^{V}$ and $R$ generate a Lie subalgebra of $(\Lambda(M)\otimes
\mathfrak{X}(M),[\![{}\cdot{},{}\cdot{}]\!])$ with relations summarized in the
following table:
\begin{equation}%
\begin{tabular}
[c]{cccc}%
$\lbrack\![{}\cdot{},{}\cdot{}]\!]$ & $P^{C}$ & $P^{V}$ & $R$\\\cline{2-4}%
$P^{C}$ & \multicolumn{1}{|c}{$2R$} & $-2R$ & $0$\\
$P^{V}$ & \multicolumn{1}{|c}{$-2R$} & $2R$ & $0$\\
$R$ & \multicolumn{1}{|c}{$0$} & $0$ & $0$%
\end{tabular}
\ \ \ \ \ . \label{Table}%
\end{equation}
Relations $[\![P^{C},R]\!]=[\![P^{V},R]\!]=0$ are the \emph{Bianchi
identities}. Table (\ref{Table}) implies that $d^{C}$, $d^{V}$, $i_{R}$ and
$L_{R}$ generate a Lie algebra of derivations of $\Lambda(M)$:
\begin{equation}%
\begin{tabular}
[c]{ccccc}%
$\![{}\cdot{},{}\cdot{}]$ & $d^{C}$ & $d^{V}$ & $i_{R}$ & $L_{R}$\\\cline{2-5}%
$d^{C}$ & \multicolumn{1}{|c}{$2L_{R}$} & $-2L_{R}$ & $L_{R}$ & $0$\\
$d^{V}$ & \multicolumn{1}{|c}{$-2L_{R}$} & $2L_{R}$ & $0$ & $0$\\
$i_{R}$ & \multicolumn{1}{|c}{$L_{R}$} & $0$ & $0$ & $0$\\
$L_{R}$ & \multicolumn{1}{|c}{$0$} & $0$ & $0$ & $0$%
\end{tabular}
\ \ \ \ \ \label{Table'}%
\end{equation}

\begin{lemma}
\label{Lemmadbar} Let $\lambda\in\overline{\Lambda}$. Then
\[
\overline{d}\lambda=d^{C}\lambda-i_{R}\lambda.
\]

\end{lemma}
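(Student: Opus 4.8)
The plan is to verify the identity $\overline{d}\lambda = d^C\lambda - i_R\lambda$ for $\lambda\in\overline{\Lambda}$ by reducing it to the case of generators, using the fact that both sides are derivations (or fail to be, in a controlled way) and that $\overline{\Lambda}$ is generated in degrees $0$ and $1$. First I would recall from Section \ref{HAF} that $(\overline{\Lambda},\overline{d})$ is the quotient of $(\Lambda(M),d)$ by the differentially closed ideal generated by $C\Lambda^1$, and that under the identification $\Lambda(M)\simeq\overline{\Lambda}\otimes C\Lambda$ coming from the splitting $V$, the projection $\Lambda(M)\to\overline{\Lambda}$ is just the quotient by the ideal $C\Lambda^{\geq 1}$. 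So it suffices to show that for $\omega\in\Lambda(M)$ a representative of $\lambda$ (say $\omega\in\overline{\Lambda}\subset\Lambda(M)$), the element $d^C\omega - i_R\omega$ also lies in $\overline{\Lambda}$ and maps to $\overline{d}\lambda$ under the projection.

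The key computation is the decomposition $d = d^C + d^V$ from Section \ref{GSF}, together with the observation that $d^V$ raises the $C\Lambda$-degree: since $P^V\in C\Lambda^1\otimes\overline{\mathfrak{X}}$, the derivation $d^V=L_{P^V}$ sends $\overline{\Lambda}$ into $C\Lambda^1\cdot(\text{something})$, i.e.\ into the ideal $C\Lambda^{\geq 1}$, hence $d^V\lambda$ projects to $0$ in $\overline{\Lambda}$. Meanwhile $d^C$ does not quite preserve $\overline{\Lambda}$ — its failure is measured precisely by the curvature. One computes, using $[i_{P^C},d]=L_{P^C}=d^C$ and the structure relations in Table (\ref{Table'}) (in particular $[d^C,d^C]=2L_R$), that $d^C$ applied to $\overline{\Lambda}$ produces a term in $\overline{\Lambda}$ plus a correction involving $R$; subtracting $i_R\lambda$ removes exactly the part that would push one out of $\overline{\Lambda}$. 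Concretely, since $\overline{\Lambda}$ is generated by $\overline\Lambda{}^0=C^\infty(M)$ and by $\overline{d}$-exact $1$-forms $\overline{d}f = \overline{df}$, it is enough to check the formula on $f\in C^\infty(M)$ and on $\overline{d}f$. On functions: $\overline{d}f$ is represented by $d^Cf$ (the $C$-component of $df$), while $d^Cf - i_Rf = d^Cf$ since $i_R$ kills functions — so the formula holds. On $1$-forms of the form $df$ with $f\in C^\infty(M)$: $\overline{d}\,\overline{df}=0$, and on the other side $d^C(d^Cf) - i_R(d^Cf) = L_R f - i_R(d^Cf)$; using $i_R d^Cf = i_R df = R(f) = L_R f$ (as $R$ acts on functions via its underlying vector-field part and $i_{P^C}df = df\circ P^C$), these cancel, giving $0$. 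The general degree-$1$ generator is handled the same way after writing it as a $C^\infty(M)$-linear combination of such $\overline{df}$'s, using that all the operators involved are derivations over $C^\infty(M)$-linear combinations up to the Leibniz corrections in (\ref{f4}), which themselves land in the ideal.

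The main obstacle is bookkeeping rather than conceptual: one must be careful that $d^C$ and $i_R$ are operators on $\Lambda(M)$, not a priori on $\overline{\Lambda}$, so the statement really asserts that the combination $d^C - i_R$ \emph{does} preserve $\overline{\Lambda}$ (equivalently, preserves the ideal $C\Lambda^{\geq 1}$) and there induces $\overline{d}$. I would verify the ideal-preservation using the Fr\"olicher--Nijenhuis identities (\ref{f4})--(\ref{f2}) applied to products $\omega\mu$ with $\mu\in C\Lambda^1$, together with $R\in C\Lambda^2\otimes C\mathfrak{X}$ and the Bianchi identities $[\![P^C,R]\!]=0$; this guarantees well-definedness of the induced operator. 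Once well-definedness is in hand, the agreement with $\overline{d}$ follows from the generator check above, because a derivation of $\overline{\Lambda}$ is determined by its values on degree $0$ and on $\overline{d}$-exact degree $1$ elements, and $\overline{d}$ and $d^C - i_R$ agree there.
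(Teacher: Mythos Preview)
Your proposal is correct and follows essentially the same approach as the paper: both sides are derivations of $\overline{\Lambda}$ (with values in $\Lambda(M)$), so it suffices to check equality on the generators $f$ and $\overline{d}f$, which you do via the same computations. The only minor difference is presentational: the paper sidesteps your separate well-definedness discussion by simply regarding both $\overline{d}$ and $d^C - i_R$ as $\Lambda(M)$-valued derivations from the start, and it obtains $i_R d^C f = L_R f$ directly from the entry $[i_R,d^C]=L_R$ in Table~(\ref{Table'}) rather than via the intermediate step $i_R d^C f = i_R df$.
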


\begin{proof}
Both $\overline{d}$ and $d^{C}-i_{R}$ are $\Lambda(M)$-valued derivations of
$\overline{\Lambda}$. They coincide provided they coincide on functions and
$\overline{d}$-exact elements in $\overline{\Lambda}{}^{1}$. For $f\in
C^{\infty}(M)$,
\[
(d^{C}-i_{R})f=d^{C}\!f=\overline{d}f.
\]
Moreover,
\[
(d^{C}-i_{R})\overline{d}f=((d^{C})^{2}-i_{R}d^{C})f=L_{R}f-[i_{R}%
,d^{C}]f=0=\overline{d}\overline{d}f
\]

\end{proof}

\begin{corollary}
Let $\lambda\in\overline{\Lambda}$. Then
\[
\overline{d}\lambda=\overline{d^{C}\lambda}.
\]

\end{corollary}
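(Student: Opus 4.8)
The plan is to obtain the Corollary as a one-line consequence of Lemma \ref{Lemmadbar}: I simply project the identity $\overline{d}\lambda=d^{C}\lambda-i_{R}\lambda$ along the quotient map $\Lambda(M)\longrightarrow\overline{\Lambda}$, $\omega\longmapsto\overline{\omega}$, and check that the $i_{R}$ term dies.

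First I would make explicit, using the identification (\ref{Fact}), that $\overline{\Lambda}$ is embedded in $\Lambda(M)$ as the summand $\overline{\Lambda}\otimes C\Lambda^{0}$ and that the projection $\omega\longmapsto\overline{\omega}$ is precisely the quotient by the ideal generated by $C\Lambda^{1}$, i.e. by the complementary summand $\overline{\Lambda}\otimes C\Lambda^{\geq1}$. In particular the embedding is a section of the projection, so $\overline{\overline{d}\lambda}=\overline{d}\lambda$. Next, since $R\in C\Lambda^{2}\otimes C\mathfrak{X}$, writing $R=\sum_{a}\rho_{a}\otimes X_{a}$ with $\rho_{a}\in C\Lambda^{2}$ and $X_{a}\in C\mathfrak{X}$ gives $i_{R}\lambda=\sum_{a}\rho_{a}\wedge i_{X_{a}}\lambda\in C\Lambda^{2}\wedge\Lambda(M)$, which lies in the ideal generated by $C\Lambda^{1}$ because $C\Lambda^{2}=\Lambda^{2}C\Lambda^{1}$. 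Hence $\overline{i_{R}\lambda}=0$.

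Putting these together, applying $\omega\longmapsto\overline{\omega}$ to the statement of Lemma \ref{Lemmadbar} gives $\overline{d}\lambda=\overline{d^{C}\lambda}-\overline{i_{R}\lambda}=\overline{d^{C}\lambda}$, which is the claim. There is no real obstacle here; the only thing to be a little careful about is that the two ``overlines'' in the identity $\overline{d}\lambda=\overline{d^{C}\lambda}$ refer to the same projection and that, under the chosen splitting, this projection restricts to the identity on $\overline{\Lambda}\subset\Lambda(M)$ --- both immediate from (\ref{Fact}).
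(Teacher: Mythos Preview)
Your proposal is correct and follows essentially the same approach as the paper: both derive the corollary from Lemma~\ref{Lemmadbar} by projecting along $\omega\mapsto\overline{\omega}$ and noting that $\overline{i_{R}\lambda}=0$. You simply spell out in detail what the paper calls ``obvious'', namely why $i_{R}\lambda$ lies in the ideal generated by $C\Lambda^{1}$.
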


\begin{proof}
It immediately follows from the above lemma and the obvious fact that
$\overline{i_{R}\lambda}=0$.
\end{proof}

\begin{lemma}
\label{Lem}Let $Z\in\overline{\Lambda}\otimes\overline{\mathfrak{X}}$. Then
\[
\overline{d}Z=[\![P^{C},Z]\!]-[R,Z]_{\mathrm{nr}}.
\]

\end{lemma}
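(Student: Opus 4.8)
The plan is to mimic exactly the proof of Lemma \ref{Lemmadbar}, but now for the differential $\overline{d}$ on the module $\overline{\Lambda}\otimes\overline{\mathfrak{X}}$ instead of the algebra $\overline{\Lambda}$. The key observation is that both $\overline{d}$ and the operator $\Delta:=[\![P^{C},{}\cdot{}]\!]-[R,{}\cdot{}]_{\mathrm{nr}}$ are $\overline{d}$-type derivations of $\overline{\Lambda}\otimes\overline{\mathfrak{X}}$ over the derivation $d^{C}-i_{R}=\overline{d}$ of $\overline{\Lambda}$ (using Lemma \ref{Lemmadbar}); that is, each satisfies a Leibniz rule of the form $\nabla(\lambda Z)=(\overline{d}\lambda) Z+(-)^{\lambda}\lambda\,\nabla Z$ for $\lambda\in\overline{\Lambda}$, $Z\in\overline{\Lambda}\otimes\overline{\mathfrak{X}}$. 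For $\overline{d}$ this is part of the DG-module structure $(\overline{\Lambda}\otimes\overline{\mathfrak{X}},\overline{d})$; for $\Delta$ it follows from the Fr\"olicher-Nijenhuis identities \eqref{f2} and \eqref{f6}, which give precisely the correction terms $(L_{\cdot}\omega)$ and $(i_{\cdot}\omega)$ needed — one checks that the combination $[\![P^{C},{}\cdot{}]\!]-[R,{}\cdot{}]_{\mathrm{nr}}$ produces exactly $d^{C}\lambda-i_{R}\lambda$ on the scalar factor. Here I must be slightly careful that $\Delta$ genuinely lands in $\overline{\Lambda}\otimes\overline{\mathfrak{X}}$ and not in the larger $\Lambda(M)\otimes\mathfrak{X}(M)$: since $P^{C}\in\overline{\Lambda}{}^{1}\otimes C\mathfrak{X}$ and $R\in C\Lambda^{2}\otimes C\mathfrak{X}$, both brackets with an element of $\Lambda(M)\otimes\mathfrak{X}(M)$ representing a class in $\overline{\Lambda}\otimes\overline{\mathfrak{X}}$, after projection, depend only on the class; this well-definedness is where a little bookkeeping with the identification \eqref{Fact} is required.

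Granting that both sides are derivations over $\overline{d}$, it suffices to check the identity on a generating set of $\overline{\Lambda}\otimes\overline{\mathfrak{X}}$ as a module, which by the structure of $\overline{\Lambda}$ (generated in degrees $0$ and by $\overline{d}$-exact degree-$1$ elements, cf.\ Section \ref{HAF}) reduces to: (a) elements $\overline{Y}$ with $Y\in\mathfrak{X}(M)$ (equivalently $Y\in V$ under our chosen splitting), i.e.\ degree-$0$ generators; and (b) elements of the form $(\overline{d}f)\otimes\overline{Y}$, which are automatically handled once (a) holds and both operators square appropriately (or more simply, since $\overline{d}$-exact elements are covered by applying the Leibniz rule to $f\cdot\overline{Y}$ and using that the identity holds on $f$ and on $\overline{Y}$). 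So the crux is case (a): for $Y\in\mathfrak{X}(M)$ (say $Y\in V$), compare $\overline{d}\,\overline{Y}$ with $\overline{[\![P^{C},Y]\!]-[Y,R]_{\mathrm{nr}}}$.

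For that computation I would unwind both sides via the Chevalley-Eilenberg formula. On the one hand, $\overline{d}\,\overline{Y}$ evaluated on $X_{1}\in C\mathfrak{X}$ is $X_{1}\cdot\overline{Y}=\overline{[X_{1},Y]}$, the Bott connection; higher evaluations vanish since $\overline{Y}$ has form-degree $0$. On the other hand, $[\![P^{C},Y]\!]$ is a form-valued vector field of degree $1$, and $i_{X_{1}}[\![P^{C},Y]\!]$ can be expanded using \eqref{f3} with $X=X_{1}$, $Z=P^{C}$, namely $i_{X_{1}}[\![P^{C},Y]\!]=[\![i_{X_{1}}P^{C},Y]\!]+[\![P^{C},i_{X_{1}}Y]\!]+i_{[\![X_{1},P^{C}]\!]}Y\pm i_{[\![X_{1},Y]\!]}P^{C}$; here $i_{X_{1}}P^{C}=P^{C}(X_{1})=X_{1}$ since $X_{1}\in C$, $i_{X_{1}}Y=0$ (form-degree $0$), and $[\![X_{1},P^{C}]\!]$ is controlled via the Fr\"olicher-Nijenhuis bracket with a vector field. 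After projecting to $\overline{\mathfrak{X}}$, the term $i_{[\![X_{1},Y]\!]}P^{C}=P^{C}([\![X_{1},Y]\!])$ picks out the $C$-component of $[X_{1},Y]$, which dies modulo $C\mathfrak{X}$, while $[\![X_{1},Y]\!]=[X_{1},Y]$ for vector fields, and the surviving contribution is $\overline{[X_{1},Y]}$. The $[Y,R]_{\mathrm{nr}}$ term is checked to project to zero on a single $C$-argument because $R$ takes values in $C\mathfrak{X}$ and has the wrong form-degree to contribute at this level (or its contribution is absorbed). The main obstacle, then, is purely organizational: carefully matching the numerous sign factors from the $L_{\infty}[1]$/de-suspension conventions and the Fr\"olicher-Nijenhuis identities, and checking the well-definedness modulo $C\mathfrak{X}$ and $C\Lambda^{1}$ — there is no conceptual difficulty, the identity is forced by the derivation property plus the values on generators, exactly as in Lemma \ref{Lemmadbar}.
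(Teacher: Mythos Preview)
Your proposal is correct and follows essentially the same strategy as the paper: reduce via the Leibniz rule over $\overline{d}$ to degree-zero generators, then verify the identity on $\overline{Y}$ by contracting with an arbitrary $X\in C\mathfrak{X}$. The only differences are cosmetic: the paper sidesteps your well-definedness concern by treating both sides as maps into the ambient $\Lambda(M)\otimes\mathfrak{X}(M)$ from the outset, and for the degree-zero check it uses the representation $\overline{Y}=[Y,P^{V}]_{\mathrm{nr}}$ together with identity (\ref{f1}) rather than expanding $i_{X}[\![P^{C},\overline{Y}]\!]$ via (\ref{f3}) as you do --- both routes collapse to $\overline{[X,\overline{Y}]}$ after noting that $i_{X}[R,\overline{Y}]_{\mathrm{nr}}=0$ for $X\in C\mathfrak{X}$.
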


\begin{proof}
Interpret both $\overline{d}$ and $\delta:=[\![P^{C},{}\cdot{}]\!]-[R,{}%
\cdot{}]_{\mathrm{nr}}$ as operators from $\overline{\Lambda}\otimes
\overline{\mathfrak{X}}$ to $\Lambda\otimes\mathfrak{X}$. If $\Delta=d$ or
$\delta$, then
\[
\Delta(\lambda Z)=(\overline{d}\lambda)Z+(-)^{\bar{\omega}}\lambda\Delta(Z).
\]
Therefore, $\overline{d}$ and $\delta$ coincide provided they coincide on zero
degree elements. Thus, let $Y\in\mathfrak{X}(M)$ and $X\in C\mathfrak{X}$.
Then $\overline{Y}=[Y,P^{V}]_{\mathrm{nr}}$ and
\begin{align*}
i_{X}\overline{d}\overline{Y}  &  =(\overline{d}\overline{Y})(X)\\
&  =\overline{[X,\overline{Y}]}\\
&  =-[[\![Y,X]\!],P^{V}]_{\mathrm{nr}}\\
&  =-[\![\overline{Y},[X,P^{V}]_{\mathrm{nr}}]\!]+[X,[\![\overline{Y}%
,P^{V}]\!]]_{\mathrm{nr}}\\
&  =i_{X}[\![P^{C},\overline{Y}]\!]\\
&  =i_{X}([\![P^{C},\overline{Y}]\!]-[R,\overline{Y}]_{\mathrm{nr}})\\
&  =i_{X}\delta\overline{Y},
\end{align*}
where I used Formula \ref{f1}. It follows from arbitrariness of $Y$ and $X$ that $\overline{d} = \delta$.
\end{proof}

\begin{corollary}
Let $Z\in\overline{\Lambda}\otimes\overline{\mathfrak{X}}$. Then
\[
\overline{d}Z=\overline{[\![P^{C},Z]\!]}.
\]

\end{corollary}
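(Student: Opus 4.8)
The plan is to reduce the statement to the preceding lemma exactly as the previous corollary reduces to Lemma \ref{Lemmadbar}. By Lemma \ref{Lem}, regarded as an identity in $\Lambda(M)\otimes\mathfrak{X}(M)$, we have $\overline{d}Z=[\![P^{C},Z]\!]-[R,Z]_{\mathrm{nr}}$. Applying the natural projection $\pi\colon\Lambda(M)\otimes\mathfrak{X}(M)\longrightarrow\overline{\Lambda}\otimes\overline{\mathfrak{X}}$ (which is the identity on the submodule $\overline{\Lambda}\otimes\overline{\mathfrak{X}}$ determined by $V$, so that $\pi(\overline{d}Z)=\overline{d}Z$), the claim becomes the single identity $\overline{[R,Z]_{\mathrm{nr}}}=0$.

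To establish $\overline{[R,Z]_{\mathrm{nr}}}=0$ I would use the Nijenhuis--Richardson formula $[R,Z]_{\mathrm{nr}}=i_{R}Z-(-)^{(R-1)(Z-1)}i_{Z}R$ and show that each summand is annihilated by $\pi$ (the sign is irrelevant). For the first summand: since $R\in C\Lambda^{2}\otimes C\mathfrak{X}$, by $C^{\infty}(M)$-linearity and the identity $i_{\omega W}=\omega\, i_{W}$ (Formula (\ref{f4})), writing $R$ locally as a sum $\rho^{a}\otimes X_{a}$ with $\rho^{a}\in C\Lambda^{2}$ and $X_{a}\in C\mathfrak{X}$, the form part of $i_{R}Z$ is a sum of terms each divisible by $\rho^{a}$, hence lies in the differentially closed ideal of $\Lambda(M)$ generated by $C\Lambda^{1}$; therefore $\overline{i_{R}Z}=0$, precisely as $\overline{i_{R}\lambda}=0$ in the previous corollary. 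For the second summand: recalling that $i_{Z}$ on $\Lambda(M)\otimes\mathfrak{X}(M)$ means $i_{Z}\otimes\operatorname{id}$, we get $i_{Z}R=(i_{Z}\rho^{a})\otimes X_{a}$, whose vector part $X_{a}$ still lies in $C\mathfrak{X}$ because $R\in C\Lambda^{2}\otimes C\mathfrak{X}$; hence its image in $\overline{\Lambda}\otimes\overline{\mathfrak{X}}$ vanishes, i.e.\ $\overline{i_{Z}R}=0$. Combining the two, $\overline{[R,Z]_{\mathrm{nr}}}=0$, and the corollary follows.

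There is no real obstacle here; the content is simply that the curvature $R$ is ``doubly longitudinal''—its form factor sits in $C\Lambda$ and its vector factor sits in $C\mathfrak{X}$—so that the Nijenhuis--Richardson correction term produced by Lemma \ref{Lem} is killed by the projection in two complementary ways. The only point requiring a little care is the bookkeeping: distinguishing the term whose form part lands in the ideal generated by $C\Lambda^{1}$ from the term whose vector part lands in $C\mathfrak{X}$, and using that $i_{R}$ acts on the form factor while $i_{Z}=i_{Z}\otimes\operatorname{id}$ leaves the $C\mathfrak{X}$-valued vector factor of $R$ intact.
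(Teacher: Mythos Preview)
Your proof is correct and follows essentially the same approach as the paper: apply Lemma \ref{Lem} and observe that $\overline{[R,Z]_{\mathrm{nr}}}=0$. The paper states this vanishing as a bare fact, whereas you supply the verification by splitting the Nijenhuis--Richardson bracket into $i_R Z$ and $i_Z R$ and killing each term separately via the form and vector factors of $R$; this is exactly the argument one would expect and is a welcome elaboration (minor note: the identity $i_{\omega W}=\omega\, i_W$ is the unlabeled first formula of the theorem in Section~\ref{FVVF}, not Formula~(\ref{f4})).
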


\begin{proof}
It immediately follows from the above lemma and the fact that $\overline
{[R,Z]_{\mathrm{nr}}}=0$.
\end{proof}

Notice that, a priori, $\overline{d}$ is defined on longitudinal differential
forms only. However, I need to extend it to the whole $\Lambda(M)$. This can
be done in two ways, both exploiting the transversal distribution $V$. Namely,
consider the derivation $d^{C} - i_{R} : \Lambda(M) \longrightarrow\Lambda
(M)$. In view of Lemma \ref{Lemmadbar}, it extends $\overline{d}$.
Alternatively, identify $\Lambda(M)$ and $\overline{\Lambda} \otimes C\Lambda$
and extend $\overline{d}$ to it as in Remark \ref{RemExt}. It is easy to see
that, actually, these two extensions coincide. Indeed, $C\Lambda$ is generated
by differential $1$-forms of the kind $P^{V} f$, $f \in C^{\infty}(M)$. Let $Y
\in\overline{\mathfrak{X}}$, then
\[
i_{Y}\overline{d}P^{V}f=i_{\overline{d}Y}P^{V}f-\overline{d}i_{Y}%
P^{V}f=i_{[\![P^{C},Y]\!]}P^{V}f-d^{C}i_{Y}P^{V}f=i_{Y}(d^{C}-i_{R})P^{V}f.
\]
It follows from the arbitrariness of $f$ and $Y$, that $\overline{d} = d^{C} -
i_{R}$.

\section{The Homotopy Lie-Rinehart Algebra of a Foliation\label{SHLRF}}

In the following
put $A:=\overline{\Lambda}$ and $\mathcal{Q}:=\overline{\Lambda}{}%
\otimes\overline{\mathfrak{X}}[1]$. According to Remark \ref{15}, one can
identify $\mathrm{Sym}_{A}(\mathcal{Q},A)$ with $\overline{\Lambda}\otimes
C\Lambda=\Lambda(M)$ in such a way that the product (\ref{Prod}) identifies
with the exterior product of differential forms. In particular $\mathrm{Sym}%
_{A}^{r}(\mathcal{Q},A)^{s}$ identifies with $\overline{\Lambda}{}^{s}\otimes
C\Lambda^{r}$. In the following, I will assume this identification. For
$\omega\in\overline{\Lambda}\otimes C\Lambda^{k}=\mathrm{Sym}_{A}%
^{k}(\mathcal{Q},A)$, I denote by
\[
\langle\omega|Z_{1},\ldots,Z_{k}\rangle\in A
\]
its action on elements $Z_{1},\ldots Z_{k}\in\mathcal{Q}$, so not to cause
confusion with the action of differential forms on vector fields. It is easy
to see that
\begin{equation}
\langle\omega|Z_{1},\ldots,Z_{k}\rangle=(-)^{\chi}i_{Z_{1}}\cdots i_{Z_{k}%
}\omega,\quad\text{where }\chi=r+\bar{\omega}\left(  \tfrac{r(r-1)}{2}+%
{\textstyle\sum\nolimits_{i=1}^{k}}
\bar{Z}_{i}\right)  . \label{26}%
\end{equation}

In view of Corollary \ref{Cor}, the existence of the de Rham differential
$d:\mathrm{Sym}_{A}(\mathcal{Q},A)\longrightarrow\mathrm{Sym}_{A}%
(\mathcal{Q},A)$ by itself implies the existence of an $LR_{\infty}%
[1]$-algebra structure on $\mathcal{Q}$. Denote anchors and brackets as usual.
I want to describe them. First of all, notice that $d$ decomposes as
\[
d=d_{0}+d_{1}+d_{2}%
\]
where
\begin{align*}
d_{0}  &  =\overline{d}=d^{C}-i_{R}\\
d_{1}  &  =d^{V}+2i_{R}\\
d_{2}  &  =-i_{R},
\end{align*}
$d_{k}$ is of bidegree $(k,-k+1)$, $k=0,1,2$, and $R$ is the curvature of the
splitting $V$ (see previous section). Moreover, from $d^{2}=0$ it follows that
$\sum_{i+j=k}[d_{i},d_{j}]=0$, $k=0,\ldots,4$, i.e.,
\[
\lbrack d_{0},d_0]=[d_{0},d_{1}]=[d_{1},d_{2}]=[d_{2},d_{2}]=0,
\]
and
\[
\lbrack d_{1},d_{1}]=-2[d_{0},d_{2}]=2L_{R},
\]
where I also used Table (\ref{Table'}). The pair $(\Lambda(M);\{d_{0}%
,d_{1},d_{2}\})$ is the \emph{Maurer-Cartan algebra} of a foliation first
discussed by Huebschmann in \cite{h05}. In Remark 4.16 of \cite{h05},
Huebschmann mentions that \textquotedblleft\emph{P.{} Michor pointed out
[\ldots] a possible connection of the notion of quasi-Lie-Rinehart bracket
with that of Fr\"{o}licher-Nijenhuis bracket}\textquotedblright. Such a
connection exists indeed, as shown by the following theorem, which provides a
description of the $LR_{\infty}[1]$-algebra structure, determined by the de
Rham differential of $M$ via Corollary \ref{Cor}, in terms of
Fr\"{o}licher-Nijenhuis calculus.

\begin{theorem}
The pair $(A,\mathcal{Q})$ possesses the structure of an $LR_{\infty}%
[1]$-algebra, such that
\begin{align*}
\{|\lambda\}  &  =\overline{d}\lambda\\
\{Z|\lambda\}  &  =-(-)^{Z}L_{Z}\lambda+i_{[R,Z]_{\mathrm{nr}}}\lambda\\
\{Z_{1},Z_{2}|\lambda\}  &  =-i_{[[R,Z_{1}]_{\mathrm{nr}},Z_{2}]_{\mathrm{nr}%
}}\lambda\\
\{Z_{1},\ldots,Z_{k-1}|\lambda\}  &  =0\quad\text{for }k>3
\end{align*}
and
\begin{align*}
\{Z\}  &  =\overline{d}Z\\
\{Z_{1},Z_{2}\}  &  =-(-)^{Z_{1}}[\![Z_{1},Z_{2}]\!]+[[R,Z_{1}]_{\mathrm{nr}%
},Z_{2}]_{\mathrm{nr}}\\
\{Z_{1},Z_{2},Z_{3}\}  &  =-[[[R,Z_{1}]_{\mathrm{nr}},Z_{2}]_{\mathrm{nr}%
},Z_{3}]_{\mathrm{nr}}\\
\{Z_{1},\ldots,Z_{k}\}  &  =0\quad\text{for }k>3
\end{align*}
for all $\lambda\in A$, $Z,,Z_{i}\in\mathcal{Q}$, $i=1,2,3,\ldots,k$.
\end{theorem}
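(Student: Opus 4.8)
The plan is to build on Corollary \ref{Cor}: the de Rham differential $d$, viewed as a degree $1$ homological derivation of $\mathrm{Sym}_{A}(\mathcal{Q},A)=\Lambda(M)$, already endows $(A,\mathcal{Q})$ with an $LR_{\infty}[1]$-algebra structure, and by Theorem \ref{23} the underlying multi-differential algebra is obtained by splitting $d$ by bidegree; this is exactly the decomposition $d=d_{0}+d_{1}+d_{2}$ with $d_{0}=\overline{d}=d^{C}-i_{R}$, $d_{1}=d^{V}+2i_{R}$, $d_{2}=-i_{R}$, $d_{k}=0$ $(k>2)$ computed just above. So nothing remains to be \emph{constructed}; the entire task is to identify the anchors $\{{}\cdot{},\ldots,{}\cdot{}|{}\cdot{}\}_{k}$ and brackets $\{{}\cdot{},\ldots,{}\cdot{}\}_{k}$ that the recovery formulas in the proof of Theorem \ref{23} attach to $d_{0},d_{1},d_{2}$. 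Two things are then immediate: since the $k$th anchor and the $k$-ary bracket are built from $d_{k-1}$, and $d_{k-1}=0$ for $k>3$, both vanish for $k>3$; and the $k=1$ anchor is $\{{}|\lambda\}_{1}=d_{0}\lambda=\overline{d}\lambda$.

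For the anchors I would use Formula (\ref{26}) to turn the recovery formula $\{Z_{1},\ldots,Z_{k-1}|\lambda\}_{k}=(-)^{\lambda(Z_{1}+\cdots+Z_{k-1})}\langle d_{k-1}\lambda\,|\,Z_{1},\ldots,Z_{k-1}\rangle$ into a Koszul-signed multiple of $i_{Z_{1}}\cdots i_{Z_{k-1}}(d_{k-1}\lambda)$. For $k=2$, rewrite $d_{1}\lambda=d^{V}\lambda+2i_{R}\lambda$, using $d=d^{C}+d^{V}$ and Lemma \ref{Lemmadbar} (whence $d^{C}\lambda=\overline{d}\lambda+i_{R}\lambda$), as $d_{1}\lambda=d\lambda-\overline{d}\lambda+i_{R}\lambda$, and contract with $Z$. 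Since $\lambda,\overline{d}\lambda\in\overline{\Lambda}$ while the vector part of $Z$ lies in $\overline{\mathfrak{X}}$, which is annihilated by $\overline{\Lambda}^{1}$, one has $i_{Z}\lambda=i_{Z}\overline{d}\lambda=0$; hence $i_{Z}d\lambda=L_{Z}\lambda$ by $[i_{Z},d]=L_{Z}$, and $i_{Z}i_{R}\lambda=\pm i_{[Z,R]_{\mathrm{nr}}}\lambda$ by $[i_{Z},i_{R}]=i_{[Z,R]_{\mathrm{nr}}}$ together with $i_{Z}\lambda=0$. Collecting signs should give $\{Z|\lambda\}=-(-)^{Z}L_{Z}\lambda+i_{[R,Z]_{\mathrm{nr}}}\lambda$. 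For $k=3$ only $d_{2}=-i_{R}$ contributes, so $\{Z_{1},Z_{2}|\lambda\}$ is a signed multiple of $i_{Z_{1}}i_{Z_{2}}i_{R}\lambda$; pushing $i_{Z_{2}}$ and then $i_{Z_{1}}$ past $i_{R}$ with the Nijenhuis--Richardson commutator, and using $i_{(\cdot)}\lambda=0$ at each step, collapses this to $-i_{[[R,Z_{1}]_{\mathrm{nr}},Z_{2}]_{\mathrm{nr}}}\lambda$.

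For the brackets, the recovery formula determines $\{Z_{1},\ldots,Z_{k}\}_{k}$ implicitly, through its pairing $\langle\omega|{}\cdot{}\rangle$ with $\omega\in\mathcal{Q}^{\ast}=\bigoplus_{s}\overline{\Lambda}^{s}\otimes C\Lambda^{1}$, in terms of $d_{k-1}$ applied to $\langle\omega|Z_{i}\rangle\in\overline{\Lambda}$ and to $\omega$. For $k=1$ this is exactly the formula for the Chevalley--Eilenberg differential on $\mathcal{Q}$ dual, via (\ref{26}), to $d_{0}$ on $\mathcal{Q}^{\ast}$; since $d_{0}=\overline{d}$ and the two extensions of $\overline{d}$ agree by the discussion at the end of Section \ref{GSF}, this is the Bott-connection differential on $\overline{\Lambda}\otimes\overline{\mathfrak{X}}$, which by Lemma \ref{Lem} equals $[\![P^{C},Z]\!]-[R,Z]_{\mathrm{nr}}$; in all cases $\{Z\}=\overline{d}Z$. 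For $k=2$ I would substitute $d_{1}=d-\overline{d}+i_{R}$ (on $\overline{\Lambda}$) into the recovery formula: the $d$- and $\overline{d}$-terms yield $-(-)^{Z_{1}}[\![Z_{1},Z_{2}]\!]$ via $[i_{Z},d]=L_{Z}$, Formula (\ref{f5}) and $[L_{Z_{1}},L_{Z_{2}}]=L_{[\![Z_{1},Z_{2}]\!]}$, while the $i_{R}$-terms yield $[[R,Z_{1}]_{\mathrm{nr}},Z_{2}]_{\mathrm{nr}}$ via the Nijenhuis--Richardson identities; one also checks, exactly as in the proof of Theorem \ref{23}, that the right-hand side of the implicit formula is $A$-linear in $\omega$. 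For $k=3$ only $d_{2}=-i_{R}$ survives and iterated Nijenhuis--Richardson commutators collapse everything to $-[[[R,Z_{1}]_{\mathrm{nr}},Z_{2}]_{\mathrm{nr}},Z_{3}]_{\mathrm{nr}}$.

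The hard part is not any single identity but the sign bookkeeping: one must propagate Koszul signs through the $[1]$-desuspension relating $\overline{\Lambda}\otimes\overline{\mathfrak{X}}$, on which the Fr\"{o}licher--Nijenhuis calculus is graded by form-degree, to $\mathcal{Q}=\overline{\Lambda}\otimes\overline{\mathfrak{X}}[1]$, reconcile the graded-skew conventions of the Fr\"{o}licher--Nijenhuis and Nijenhuis--Richardson brackets with the graded-symmetric conventions of $L_{\infty}[1]$-algebras, and carry along the signs in (\ref{26}) and in the recovery formulas. A subsidiary point is to confirm that the iterated curvature brackets behave as claimed: because $R\in C\Lambda^{2}\otimes C\mathfrak{X}$, $P^{C}\in\overline{\Lambda}^{1}\otimes C\mathfrak{X}$ and $Z\in\overline{\Lambda}\otimes\overline{\mathfrak{X}}$, a bidegree count together with Table (\ref{Table}) shows which components of $[R,Z]_{\mathrm{nr}}$ and $[[R,Z_{1}]_{\mathrm{nr}},Z_{2}]_{\mathrm{nr}}$ actually contribute when inserted into an element of $\overline{\Lambda}$, and that the resulting operations take values in the correct modules. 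No $LR_{\infty}[1]$-axiom needs to be verified by hand, since that is guaranteed by Corollary \ref{Cor}.
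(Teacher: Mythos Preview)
Your proposal is correct and follows essentially the same approach as the paper: existence via Corollary~\ref{Cor}, then identification of anchors and brackets from the recovery formulas of Theorem~\ref{23} applied to $d_{0},d_{1},d_{2}$, with all computations carried out through Fr\"{o}licher--Nijenhuis and Nijenhuis--Richardson commutator identities. Two minor differences are worth noting. First, the paper works directly with $d_{1}=d^{V}+2i_{R}$ and expands $[i_{Z},d^{V}+2i_{R}]$ using Formula~(\ref{f5}) and Lemma~\ref{Lem}, whereas you repackage this as $d_{1}\lambda=d\lambda-\overline{d}\lambda+i_{R}\lambda$; the two are equivalent and your route is arguably slightly cleaner since $[i_{Z},d]=L_{Z}$ is more familiar. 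Second, and more usefully, the paper handles your ``hard part''---the sign bookkeeping---by the standard polarization trick: since both sides of every identity to be checked are $\mathbb{R}$-multilinear and graded symmetric in the $Z_{i}$'s, it suffices to verify them for $Z_{1}=\cdots=Z_{k}=Z$ with $Z$ even, which collapses all Koszul signs. You would do well to adopt this rather than track signs through general arguments.
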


\begin{proof}
First of all notice that the right hand sides of all identities in the
statement of the theorem are $\mathbb{R}$-multilinear and graded symmetric in
the $Z_{i}$'s. Therefore, I can apply the standard trick and prove the
identities just for $Z_{i}=Z$ even. Compute the anchors: let $\lambda\in A$,%
\begin{align*}
\{Z|\lambda\}  &  =(d_{1}\lambda)(Z)\\
&  =-i_{Z}(d^{V}+2i_{R})\lambda\\
&  =-[i_{Z},d^{V}+2i_{R}]\lambda\\
&  =-(L_{Z}+i_{[\![P^{C},Z]\!]}+2i_{[Z,R]_{\mathrm{nr}}})\lambda\\
&  =-(L_{Z}-i_{[R,Z]_{\mathrm{nr}}}+i_{\overline{d}Z})\lambda\\
&  =-(L_{Z}-i_{[R,Z]_{\mathrm{nr}}})\lambda,
\end{align*}
where I used Lemma \ref{Lem} and the fact the $i_{Z}\lambda=0$ for all $Z\in$
$\mathcal{Q}$ and $\lambda\in A$.

Similarly,
\begin{align*}
\{Z,Z|\lambda\}  &  =(d_{2}\lambda)(Z,Z)\\
&  =-i_{Z}i_{Z}i_{R}\lambda\\
&  =-i_{i_{Z}i_{Z}R}\lambda\\
&  =-i_{[[R,Z]_{\mathrm{nr}},Z]_{\mathrm{nr}}}\lambda.
\end{align*}
Since $d_{k}=0$ for $k>2$, higher anchors vanish.

Now compute the brackets. Let $\omega\in\mathcal{Q}^{\ast}=\overline{\Lambda
}{}{}{}\otimes C\Lambda^{1}$. Then
\begin{align*}
(-)^{\omega}\langle\omega|\{Z\}\rangle &  =\{\langle\omega|Z\rangle\}-\langle
d_{1}\omega|Z\rangle\\
&  =-\overline{d}i_{Z}\omega+i_{Z}\overline{d}\omega\\
&  =-[\overline{d},i_{Z}]\omega\\
&  =-i_{\overline{d}Z}\omega\\
&  =(-)^{\omega}\langle\omega|\overline{d}Z\rangle,
\end{align*}
where I used (\ref{26}). Similarly,
\begin{align*}
(-)^{\omega}\langle\omega|\{Z,Z\}\rangle &  =2\{Z|\langle\omega|Z\rangle
\}-\langle d_{1}\omega|Z,Z\rangle\\
&  =2(L_{Z}-i_{[R,Z]_{\mathrm{rn}}})i_{Z}\omega-i_{Z}^{2}(d^{V}+2i_{R}%
)\omega\\
&  =2(L_{Z}-i_{[R,Z]_{\mathrm{rn}}})i_{Z}\omega-[i_{Z}^{2},(d^{V}%
+2i_{R})]\omega\\
&  =2(L_{Z}-i_{[R,Z]_{\mathrm{rn}}})i_{Z}\omega-[i_{Z},(d^{V}+2i_{R}%
)]i_{Z}\omega\\
&  -i_{Z}[i_{Z},(d^{V}+2i_{R})]\omega\\
&  =2(L_{Z}-i_{[R,Z]_{\mathrm{rn}}})i_{Z}\omega-(L_{Z}-i_{[R,Z]_{\mathrm{rn}}%
}+i_{\overline{d}Z})i_{Z}\omega\\
&  -i_{Z}(L_{Z}-i_{[R,Z]_{\mathrm{rn}}}+i_{\overline{d}Z})\omega\\
&  =[L_{Z}-i_{[R,Z]_{\mathrm{rn}}},i_{Z}]\omega\\
&  =i_{[\![Z,Z]\!]-[[R,Z]_{\mathrm{nr}},Z]_{\mathrm{nr}}}\omega\\
&  =(-)^{\omega}\langle\omega|-[\![Z,Z]\!]+[[R,Z]_{\mathrm{nr}}%
,Z]_{\mathrm{nr}}\rangle.
\end{align*}
Finally,
\begin{align*}
(-)^{\omega}\langle\omega|\{Z,Z,Z\}\rangle &  =3\{Z,Z|\langle\omega
|Z\rangle\}-\langle d_{2}\omega|Z,Z,Z\rangle\\
&  =-3i_{[[R,Z]_{\mathrm{nr}},Z]_{\mathrm{nr}}}i_{Z}\omega+i_{Z}^{3}%
i_{R}\omega\\
&  =-3i_{[[R,Z]_{\mathrm{nr}},Z]_{\mathrm{nr}}}i_{Z}\omega+i_{Z}[i_{Z}%
^{2},i_{R}]\omega\\
&  =-3i_{[[R,Z]_{\mathrm{nr}},Z]_{\mathrm{nr}}}i_{Z}\omega-i_{Z}^{2}%
[i_{R},i_{Z}]\omega-i_{Z}[i_{R},i_{Z}]i_{Z}\omega\\
&  =-3i_{[[R,Z]_{\mathrm{nr}},Z]_{\mathrm{nr}}}i_{Z}\omega-[i_{Z}%
^{2},i_{[R,Z]_{\mathrm{nr}}}]\omega-i_{Z}i_{[R,Z]_{\mathrm{nr}}}i_{Z}\omega\\
&  =-3i_{[[R,Z]_{\mathrm{nr}},Z]_{\mathrm{nr}}}i_{Z}\omega-2[i_{Z}%
,i_{[R,Z]_{\mathrm{nr}}}]i_{Z}\omega-i_{Z}[i_{Z},i_{[R,Z]_{\mathrm{nr}}%
}]\omega\\
&  =-i_{[[R,Z]_{\mathrm{nr}},Z]_{\mathrm{nr}}}i_{Z}\omega+i_{Z}%
i_{[[R,Z]_{\mathrm{nr}},Z]_{\mathrm{nr}}}\omega\\
&  =-[i_{[[R,Z]_{\mathrm{nr}},Z]_{\mathrm{nr}}},i_{Z}]\omega\\
&  =-i_{[[[R,Z]_{\mathrm{nr}},Z]_{\mathrm{nr}},Z]_{\mathrm{nr}}}\omega\\
&  =-(-)^{\omega}\langle\omega|[[[R,Z]_{\mathrm{nr}},Z]_{\mathrm{nr}%
},Z]_{\mathrm{nr}}\rangle.
\end{align*}
Similarly as above, higher brackets vanish.
\end{proof}

\section{Change of Splitting\label{CS}}

A priori the $LR_{\infty}[1]$-algebra described in the previous section
depends on the choice of the complementary distribution $V$. In fact, it does
not, up to isomorphisms, as an immediate consequence of its derivation from
the (multi-)differential algebra $(\Lambda(M),d)$. Namely, let $V^{\prime}$ be a
different complementary distribution. Denote by $\overline{\Lambda}{}^{\prime
}$ (resp., $\overline{\mathfrak{X}}{}^{\prime}$) the image of $\overline
{\Lambda}$ (resp., $\overline{\mathfrak{X}}$) under the embedding
$\overline{\Lambda}\longrightarrow\Lambda(M)$ (resp., $\overline{\mathfrak{X}%
}\longrightarrow\mathfrak{X}(M)$) determined by $V^{\prime}$. The algebras
$\overline{\Lambda}{}\otimes C\Lambda^{k}$ and $\overline{\Lambda}{}^{\prime
}\otimes C\Lambda^{k}$ both identify with $\Lambda(M)$ and, in view of
Definition \ref{Def}, $d$ induces isomorphic $LR_{\infty}[1]$-algebra
structures on $(\overline{\Lambda},\mathcal{Q})$ and $(\overline{\Lambda}%
{}^{\prime},\mathcal{Q}^{\prime}:=\overline{\Lambda}{}^{\prime}\otimes
\overline{\mathfrak{X}}{}^{\prime}[1])$ (up to the obvious identifications
$\overline{\mathrm{id}}:\overline{\Lambda}{}^{\prime}\longrightarrow
\overline{\Lambda}$, and $\overline{\mathrm{id}}:\overline{\Lambda}{}^{\prime
}\otimes\overline{\mathfrak{X}}{}^{\prime}\longrightarrow\overline{\Lambda}%
{}\otimes\overline{\mathfrak{X}}{}$, all the $\overline{\mathrm{id}}$'s
appearing below in this section are due to this). Let $\psi:\overline{\Lambda
}\otimes C\Lambda\longrightarrow\overline{\Lambda}{}^{\prime}\otimes C\Lambda$
be the composition of isomorphisms
\[
\overline{\Lambda}\otimes C\Lambda\longrightarrow\Lambda(M)\longrightarrow
\overline{\Lambda}{}^{\prime}\otimes C\Lambda.
\]
Now, I describe the isomorphism $\mathcal{Q}^{\prime}\longrightarrow
\mathcal{Q}$. I will use the same notations as in Section \ref{MSHLR}. Let
${}^{\prime}P^{C}\in\overline{\Lambda}{}^{\prime}\otimes C\mathfrak{X}$ be the
projector on $C$ determined by $V^{\prime}$.

If $V^{\prime}$ is locally spanned by vector fields $\ldots,\partial/\partial
u^{\alpha}+{}^{\prime}V_{\alpha}^{i}\partial_{i},\ldots$, then $^{\prime}%
P^{C}$ is locally given by $^{\prime}P^{C}=\overline{d}{}^{\prime}x^{i}%
\otimes\partial_{i}$, where $\overline{d}{}^{\prime}x^{i}:=dx^{i}-{}^{\prime
}V_{\alpha}^{i}du^{\alpha}$. Put $\Delta:=P^{C}-{}{}^{\prime}P^{C}\in
C\Lambda^{1}\otimes C\mathfrak{X}$. Locally
\[
\Delta=\Delta_{\alpha}^{i}du^{\alpha}\otimes\partial_{i},\quad\Delta_{\alpha
}^{i}:={}^{\prime}V_{\alpha}^{i}-V_{\alpha}^{i}{}.
\]

\begin{proposition}
The maps $\psi_{k}:\overline{\Lambda}\longrightarrow\overline{\Lambda}{}%
{}^{\prime}\otimes C\Lambda^{k}$ and $\Psi_{k}:\overline{\Lambda}{}\otimes
C\Lambda^{1}\longrightarrow\overline{\Lambda}{}{}^{\prime}\otimes C\Lambda
^{k}$ determined by $\psi$ are given by $i_{\Delta}^{k}$ and $i_{\Delta}%
^{k-1}$ respectively.
\end{proposition}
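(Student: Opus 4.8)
The plan is to exploit that, by its very construction, $\psi$ is nothing but the identity map of $\Lambda(M)$ read through two different identifications. Indeed, $\mathrm{Sym}_{\overline\Lambda}(\mathcal Q,\overline\Lambda)$ and $\mathrm{Sym}_{\overline\Lambda'}(\mathcal Q',\overline\Lambda')$ are both identified, as graded algebras, with $\Lambda(M)$ via the splittings $V$ and $V'$ (Remark \ref{15} and the factorization (\ref{Fact})), and $\psi$ is the composition of the first identification with the inverse of the second. Hence, after re-expressing an element written in the $V$-bigrading in the $V'$-bigrading, $\psi_k(\lambda)$ for $\lambda\in\overline\Lambda$ is exactly the component of $C\Lambda$-degree $k$ of $\lambda$ in the $V'$-bigrading, and $\Psi_k(\omega)$ for $\omega\in\overline\Lambda\otimes C\Lambda^1$ is the $C\Lambda$-degree-$k$ component of $\omega$ in the $V'$-bigrading. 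So the statement reduces to re-expansion in a shifted basis.

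First I would record the elementary properties of $i_\Delta$, $\Delta=P^{C}-{}^{\prime}P^{C}$. Since $\Delta\in C\Lambda^{1}\otimes C\mathfrak X$, the derivation $i_\Delta\colon\Lambda(M)\to\Lambda(M)$ has total degree $0$, kills $C^\infty(M)$ and $C\Lambda^{1}$, and in adapted coordinates satisfies $i_\Delta\,d^{C}\!x^{i}=\Delta_\alpha^{i}du^\alpha=d^{C}\!x^{i}-\overline d{}^{\prime}x^{i}$; consequently it raises the $C\Lambda$-degree by exactly one (in either bigrading) and strictly lowers the $\overline\Lambda$-degree, so it is locally nilpotent and $\exp(\pm i_\Delta)$ is a well-defined algebra automorphism of $\Lambda(M)$.

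The key step is then to pass to generators. The algebra $\overline\Lambda\subset\Lambda(M)$ is generated over $C^\infty(M)$ by the $\overline d$-exact $1$-forms $d^{C}\!x^{i}$, and $\mathcal Q^{\ast}=\overline\Lambda\otimes C\Lambda^{1}$ is generated over $\overline\Lambda$ by the $du^\alpha$; moreover $\psi$ is a morphism of algebras. On $f\in C^\infty(M)$ and on $du^\alpha$ nothing happens ($\psi(f)=f$, $\psi(du^\alpha)=du^\alpha$), while $\psi(d^{C}\!x^{i})=\overline d{}^{\prime}x^{i}+\Delta_\alpha^{i}du^\alpha$, whose $C\Lambda$-degree-$0$ part is $\overline d{}^{\prime}x^{i}=\overline{\mathrm{id}}{}^{-1}(d^{C}\!x^{i})$ and whose $C\Lambda$-degree-$1$ part is $\Delta_\alpha^{i}du^\alpha=i_\Delta(d^{C}\!x^{i})$, higher parts vanishing. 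Thus $\overline{\mathrm{id}}\circ\psi$ and $\exp(i_\Delta)$ agree on the generators of $\overline\Lambda$, hence — both being morphisms of $\overline\Lambda$ with values in $\Lambda(M)$ — on all of $\overline\Lambda$; reading off the $C\Lambda$-degree-$k$ component gives $\psi_k=i_\Delta^{k}$ on $\overline\Lambda$ (equivalently, $\overline{\mathrm{id}}\circ\psi|_{\overline\Lambda}=\exp(i_\Delta)$, and $\psi_k$ is its $C\Lambda$-degree-$k$ homogeneous piece). For $\Psi_k$ one uses that $i_\Delta$ kills $C\Lambda^{1}$ while $\psi$ fixes it; by the Leibniz rule for $i_\Delta$ and multiplicativity of $\psi$, on $\lambda\otimes du^\alpha$ one gets $\Psi_k(\lambda\otimes du^\alpha)=(i_\Delta^{k-1}\lambda)\wedge du^\alpha=i_\Delta^{k-1}(\lambda\otimes du^\alpha)$, so $\Psi_k=i_\Delta^{k-1}$ on $\overline\Lambda\otimes C\Lambda^{1}$.

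The main obstacle is not any hard computation but the bookkeeping of the several identifications $\overline{\mathrm{id}}$ — between $\overline\Lambda'$ and $\overline\Lambda$, and between $\overline\Lambda'\otimes\overline{\mathfrak X}'$ and $\overline\Lambda\otimes\overline{\mathfrak X}$ — that are implicit in viewing the $LR_\infty[1]$-algebras determined by $V$ and $V'$ as living over the \emph{same} base algebra, and checking that the morphism components $\psi_k,\Psi_k$ of Section \ref{MSHLR} really coincide with the naive $C\Lambda$-degree projections of $\psi|_{\overline\Lambda}$ and $\psi|_{\mathcal Q^{\ast}}$ used above (this is immediate from their definition). A secondary point is the well-posedness of the inductive formula (\ref{9}) defining the $\Phi_k$'s, i.e.\ Lemma \ref{Alemma5}; but once $\psi_k,\Psi_k$ are identified with the explicit operators $i_\Delta^{k}$, $i_\Delta^{k-1}$ this is automatic. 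The Koszul signs hidden in Remark \ref{15} (the factors $(-)^{r(r-1)/2}$) must be carried along, yet they are harmless because everything is reduced to the multiplicative structure and to generators.
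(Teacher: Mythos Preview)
Your approach is different from the paper's and conceptually cleaner: the paper simply expands $\lambda=\lambda_{i_1\cdots i_q}\overline d x^{i_1}\cdots\overline d x^{i_q}$ in local coordinates, substitutes $\overline d x^{i}=\overline d{}'x^{i}+\Delta^{i}_{\alpha}du^{\alpha}$, and reads off the $C\Lambda^k$-component, whereas you recognise $\overline{\mathrm{id}}\circ\psi$ as the algebra automorphism $\exp(i_\Delta)$ of $\Lambda(M)$ and reduce everything to a check on generators. That multiplicative argument is correct as far as it goes, and it buys you a coordinate-free proof and an immediate explanation of why $\Psi_k$ is just $\psi_{k-1}$ shifted by one $C\Lambda$-degree.

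There is, however, a genuine slip in your final extraction. The $C\Lambda$-degree-$k$ homogeneous piece of $\exp(i_\Delta)=\sum_j\tfrac{1}{j!}\,i_\Delta^{\,j}$ is $\tfrac{1}{k!}\,i_\Delta^{\,k}$, not $i_\Delta^{\,k}$. A two-line check confirms this: for $\lambda=d^C\!x^{1}\wedge d^C\!x^{2}$ one has $\psi_2(\lambda)=\Delta^{1}_{\alpha}\Delta^{2}_{\beta}\,du^{\alpha}\wedge du^{\beta}$, while $i_\Delta^{\,2}(\lambda)=2\,\Delta^{1}_{\alpha}\Delta^{2}_{\beta}\,du^{\alpha}\wedge du^{\beta}$. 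So from your own identity $\overline{\mathrm{id}}\circ\psi|_{\overline\Lambda}=\exp(i_\Delta)$ you should conclude $\psi_k=\tfrac{1}{k!}\,i_\Delta^{\,k}$ and $\Psi_k=\tfrac{1}{(k-1)!}\,i_\Delta^{\,k-1}$ (after the $\overline{\mathrm{id}}$ identification). Either the proposition is stated up to this harmless combinatorial factor, or you need to explain where the $k!$ is absorbed; in any case your write-up is internally inconsistent as it stands and should be reconciled before proceeding to the computation of the $\Phi_k$'s, where the same factor will propagate.
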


\begin{proof}
The simplest proof is in local coordinates. Thus, let $\lambda\in
\overline{\Lambda}{}$ be locally given by
\[
\lambda=\lambda_{i_{1}\cdots i_{q}}\overline{d}{}x^{i_{1}}\cdots\overline{d}%
{}x^{i_{q}}=\lambda_{i_{1}\cdots i_{q}}(\overline{d}{}^{\prime}x^{i_{1}%
}+\Delta_{\alpha_{1}}^{i_{1}}du^{\alpha_{1}})\cdots(\overline{d}{}^{\prime
}x^{i_{q}}+\Delta_{\alpha_{q}}^{i_{q}}du^{\alpha_{q}}).
\]
Its component in $\overline{\Lambda}{}^{\prime}\otimes C\Lambda^{k}$ is
\[
\psi_{k}(\lambda)=(-)^{kq}k!\tbinom{q}{k}\Delta_{\alpha_{1}}^{i_{1}}%
\cdots\Delta_{\alpha_{k}}^{i_{k}}\lambda_{i_{1}\cdots i_{q}}^{i_{k+1}%
}\overline{d}{}^{\prime}x^{i_{1}}\cdots\overline{d}{}^{\prime}x^{i_{q}}\otimes
du^{\alpha_{1}}\cdots du^{\alpha_{k}}=i_{\Delta}^{k}\lambda.
\]
Similarly, let $\omega\in\overline{\Lambda}{}\otimes C\Lambda^{1}$ be locally
given by
\[
\omega=\omega_{\alpha}\otimes du^{\alpha},\quad\omega_{\alpha}\in
\overline{\Lambda}{}.
\]
Then
\[
\Psi_{k}(\omega)=\psi_{k-1}(\omega_{\alpha})\otimes du^{\alpha}=i_{\Delta
}^{k-1}\omega_{\alpha}\otimes du^{\alpha}=i_{\Delta}^{k-1}\omega.
\]

\end{proof}

Now I will describe the maps $\phi_{k}:(\mathcal{Q}^{\prime})^{\times k}%
\times\overline{\Lambda}{}{}\longrightarrow\overline{\Lambda}$ and $\Phi
_{k}:(\mathcal{Q}^{\prime})^{\times k}\longrightarrow\mathcal{Q}$. For
$Z^{\prime}\in\mathcal{Q}^{\prime}$, it is convenient to put
\[
\Delta Z^{\prime}:=-i_{\Delta}Z^{\prime}.
\]
Now, Let $Z_{1}^{\prime},\ldots,Z_{k}^{\prime}\in\mathcal{Q}^{\prime}$ and
$\lambda\in\overline{\Lambda}{}.$ Clearly,%
\begin{align*}
\phi_{k}(Z_{1}^{\prime},\ldots,Z_{k}^{\prime}|\lambda)  &  =(-)^{\lambda
^{\prime}(Z_{1}^{\prime}+\cdots+Z_{k}^{\prime})}\overline{\mathrm{id}}%
\langle\psi_{k}(\lambda^{\prime})|Z_{1}^{\prime},\ldots,Z_{k}^{\prime}%
\rangle\\
&  =(-)^{\chi}\overline{\mathrm{id}}\left(  i_{Z_{1}^{\prime}}\cdots
i_{Z_{k}^{\prime}}i_{\Delta}^{k}\lambda\right)
\end{align*}
with
\[
\chi=r+\bar{\lambda}\left(  \tfrac{r(r-1)}{2}+%
{\textstyle\sum\nolimits_{i=1}^{k}}
\bar{Z}_{i}^{\prime}\right)  .
\]

\begin{proposition}
$\Phi_{1}(Z^{\prime})=\overline{\mathrm{id}}(Z^{\prime})$ and%
\begin{equation}
\Phi_{k}(Z_{1}^{\prime},\ldots,Z_{k}^{\prime})=\overline{\mathrm{id}}%
\sum_{\lambda\in S_{k}}\alpha(\sigma,\boldsymbol{Z}^{\prime})i_{Z_{\sigma
(1)}^{\prime}}i_{\Delta Z_{\sigma(2)}^{\prime}}\cdots i_{\Delta Z_{\sigma
(k-1)}^{\prime}}\Delta Z_{\sigma(k)}^{\prime} \label{16}%
\end{equation}
for all $k>1$ and $Z^{\prime},Z_{1}^{\prime},\ldots,Z_{k}^{\prime}%
\in\mathcal{Q}^{\prime}$.
\end{proposition}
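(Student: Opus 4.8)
The plan is to prove both formulas by induction on $k$, using the defining recursion (\ref{9}) for the maps $\Phi_k$, the identifications $\psi_k=i_\Delta^k$ and $\Psi_k=i_\Delta^{k-1}$ established in the preceding proposition, and the formula (\ref{26}) relating the pairing $\langle\,\cdot\,|\,\cdots\,\rangle$ to iterated contractions. Since $\mathcal{Q}$ is finitely generated and projective it is reflexive, so an element of $\mathcal{Q}$ is determined by its pairings $\langle\omega\,|\,\cdot\,\rangle$ with all $\omega\in\mathcal{Q}^\ast$; it therefore suffices to compute $\langle\omega\,|\,\Phi_k(Z_1',\ldots,Z_k')\rangle$ for every such $\omega$.

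For the base case $k=1$, the recursion (\ref{9}) reads $\omega(\Phi_1(Z'))=\Psi_1(\omega)(Z')$, and since $\Psi_1=i_\Delta^0$ is the canonical identification $\overline{\Lambda}\otimes C\Lambda^1\to\overline{\Lambda}{}'\otimes C\Lambda^1$, that is $\overline{\mathrm{id}}$ on $\mathcal{Q}^\ast$, reflexivity of $\mathcal{Q}$ forces $\Phi_1(Z')=\overline{\mathrm{id}}(Z')$.

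For the inductive step, write $F_k(Z_1',\ldots,Z_k')$ for the right-hand side of (\ref{16}) and assume $\Phi_i=F_i$ for all $1\le i<k$. Substituting this, together with $\psi_j=i_\Delta^j$, $\Psi_k=i_\Delta^{k-1}$ and (\ref{26}), into (\ref{9}), one must check that $\langle\omega\,|\,F_k(Z_1',\ldots,Z_k')\rangle$ plus the correction term of (\ref{9}) (now expressed through the $F_i$) equals $\Psi_k(\omega)(Z_1',\ldots,Z_k')=\langle i_\Delta^{k-1}\omega\,|\,Z_1',\ldots,Z_k'\rangle$. After rewriting each evaluation of a graded symmetric multilinear map as an iterated contraction via (\ref{26}), every term on the left becomes a composite of the contractions $i_{Z'_m}$, $i_{\Delta Z'_m}$ and $i_\Delta^{\,j}$. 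One then transports the surplus factors of $i_\Delta$ through the $i_{Z'_m}$'s using the Frölicher--Nijenhuis identities of Section \ref{FVVF}, chiefly $[i_X,i_Y]=i_{[X,Y]_{\mathrm{nr}}}$ (so that $i_\Delta$ and $i_{Z'_m}$ commute up to $i_{[\Delta,Z'_m]_{\mathrm{nr}}}$) together with the definition $\Delta Z'=-i_\Delta Z'$; regrouping the contraction terms so produced against the leading $i_{\Delta Z'_m}$ factors of (\ref{16}), everything telescopes to $\langle i_\Delta^{k-1}\omega\,|\,Z_1',\ldots,Z_k'\rangle$, and reflexivity of $\mathcal{Q}$ yields $\Phi_k=F_k$.

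The substance of the argument, and its main obstacle, is entirely combinatorial: one must reconcile the nested sums over $(i,j)$-unshuffles in (\ref{9}) — further resolved through the recursion on the lower $\Phi_i$'s — with the single sum over $S_k$ in (\ref{16}), while tracking the Koszul signs $\alpha(\sigma,Z_1',\ldots,Z_k')$ and the signs $\chi$ of (\ref{26}) generated whenever contractions are reordered. An alternative, in the spirit of the proof of the preceding proposition, is to run the computation in coordinates $\ldots,x^i,\ldots,u^\alpha,\ldots$ adapted to $C$, where $\Delta=\Delta^i_\alpha\,du^\alpha\otimes\partial_i$ is explicit and $\mathcal{Q}'$ is generated over $\overline{\Lambda}{}'$ by the $V'_\alpha$; then (\ref{16}) is verified against (\ref{9}) on local generators, the combinatorial factors again being the only delicate point.
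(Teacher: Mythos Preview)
Your outline is correct in spirit---induction on $k$ through the recursion (\ref{9}), with $\psi_j=i_\Delta^j$ and $\Psi_k=i_\Delta^{k-1}$---but it omits the two simplifications that make the computation go through, and without them your ``telescoping'' claim is left as an assertion rather than a proof.

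First, the paper does not carry the Koszul signs and unshuffle sums through the calculation. Since both sides of (\ref{16}) are graded symmetric multilinear in $Z'_1,\ldots,Z'_k$, it suffices to evaluate on $Z'_1=\cdots=Z'_k=Z'$ with $Z'$ even; this collapses every unshuffle sum to a multinomial coefficient and removes all sign tracking. Iterating (\ref{9}) on equal even arguments then gives the closed expression
\[
i_{\Phi_k(Z'^k)}\omega=\overline{\mathrm{id}}\sum_{s=1}^{k}(-)^{k+s}\!\!\sum_{\substack{m_1+\cdots+m_s=k\\ m_j>0}}\tbinom{k}{m_1,\ldots,m_s}\,i_{Z'}^{m_1}i_\Delta^{m_1}\cdots i_{Z'}^{m_{s-1}}i_\Delta^{m_{s-1}}i_{Z'}^{m_s}i_\Delta^{m_s-1}\omega,
\]
which is manageable; trying to reach the analogue of this with distinct arguments directly is exactly the combinatorial morass you flag but do not resolve.

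Second, and more important, you test against arbitrary $\omega\in\mathcal{Q}^\ast=\overline{\Lambda}\otimes C\Lambda^1$, whereas by $\overline{\Lambda}$-linearity it is enough to know $i_{\Phi_k(Z'^k)}\omega$ for $\omega\in C\Lambda^1$. For such $\omega$ one has $i_\Delta\omega=0$ (since $\Delta\in C\Lambda^1\otimes C\mathfrak{X}$ and $C\Lambda^1$ annihilates $C\mathfrak{X}$). This kills all terms with $m_s>1$ in the display above and reduces the whole thing to the clean recursion
\[
i_{\Phi_k(Z'^k)}\omega=k\,\overline{\mathrm{id}}\big(i_{\Phi_{k-1}(Z'^{k-1})}\,i_{\Delta Z'}\,\omega\big),
\]
from which the formula $\Phi_k(Z'^k)=k!\,\overline{\mathrm{id}}\big(i_{Z'}i_{\Delta Z'}^{\,k-2}\Delta Z'\big)$ follows by a one-line local-coordinate unwind. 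Your commutator manoeuvres with $[i_\Delta,i_{Z'}]$ are aimed in the right direction, but without the observation $i_\Delta\omega=0$ for $\omega\in C\Lambda^1$ there is no reason the terms should cancel as claimed.
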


\begin{proof}
Let $\omega\in\mathcal{Q}^{\ast}$ and $Z^{\prime}$ be an even element of
$\mathcal{Q}^{\prime}$. Then%
\begin{align}
i_{\Phi_{k}(Z^{k})}\omega &  =-\langle\omega|\Phi_{k}(Z^{\prime k}%
)\rangle\nonumber\\
&  =-\overline{\mathrm{id}}\langle\Psi_{k}(\omega)|Z^{\prime k}\rangle
+\overline{\mathrm{id}}\sum_{\substack{m_{1}+m_{2}=k\\m_{1},m_{2}>0}%
}\tbinom{k}{m_{1}}\langle\psi_{m_{1}}\langle\omega|\Phi_{m_{2}}(Z^{\prime
m_{2}})\rangle|Z^{\prime m_{1}}\rangle\nonumber\\
&  =-(-)^{k}\overline{\mathrm{id}}\left(  i_{Z^{\prime}}^{k}i_{\Delta}%
^{k-1}\omega\right)  -\overline{\mathrm{id}}\sum_{\substack{m_{1}%
+m_{2}=k\\m_{1},m_{2}>0}}\tbinom{k}{m_{1}}(-)^{m_{1}}i_{Z^{\prime}}^{m_{1}%
}i_{\Delta}^{m_{1}}i_{\Phi_{m_{2}}(Z^{\prime m_{2}})}\omega\nonumber\\
&  =\cdots\\
&  =\overline{\mathrm{id}}\sum_{s=1}^{k}(-)^{k+s}\sum_{\substack{m_{1}%
+\cdots+m_{s}=k\\m_{1},\ldots,m_{s}>0}}\tbinom{k}{m_{1},\cdots,m_{s}%
}i_{Z^{\prime}}^{m_{1}}i_{\Delta}^{m_{1}}\cdots i_{Z^{\prime}}^{m_{s-1}%
}i_{\Delta}^{m_{s-1}}i_{Z^{\prime}}^{m_{s}}i_{\Delta}^{m_{s}-1}\omega.
\label{17}%
\end{align}
In particular, for $k=1$, one gets
\[
i_{\Phi_{1}(Z^{\prime})}\omega=\overline{\mathrm{id}}\left(  i_{Z^{\prime}%
}\omega\right)  =i_{\overline{\mathrm{id}}(Z^{\prime})}\omega\Longrightarrow
\Phi_{1}(Z^{\prime})=\overline{\mathrm{id}}(Z^{\prime}),
\]
which is the base of induction. Notice that, by linearity, $\Phi_{k}(Z^{\prime
k})$ is known provided $i_{\Phi_{k}(Z^{\prime k})}\omega$ is known for all
$\omega\in C\Lambda^{1}$. But in this case $i_{\Delta}\omega=0$, and
(\ref{17}) reduces to
\begin{align*}
i_{\Phi_{k}(Z^{\prime k})}\omega &  =k\overline{\mathrm{id}}\sum_{s=1}%
^{k}(-)^{k+s}\sum_{\substack{m_{1}+\cdots+m_{s-1}=k-1\\m_{1},\ldots,m_{s-1}%
>0}}\tbinom{k-1}{m_{1},\cdots,m_{s-1}}i_{Z^{\prime}}^{m_{1}}i_{\Delta}^{m_{1}%
}\cdots i_{Z^{\prime}}^{m_{s-1}}i_{\Delta}^{m_{s-1}}i_{Z^{\prime}}\omega\\
&  =-k\overline{\mathrm{id}}\sum_{s=1}^{k}(-)^{k+s}\sum_{\substack{m_{1}%
+\cdots+m_{s-1}=k-1\\m_{1},\ldots,m_{s-1}>0}}\tbinom{k-1}{m_{1},\cdots
,m_{s-1}}i_{Z^{\prime}}^{m_{1}}i_{\Delta}^{m_{1}}\cdots i_{Z^{\prime}%
}^{m_{s-1}}i_{\Delta}^{m_{s-1}-1}i_{\Delta Z^{\prime}}\omega.
\end{align*}
For $k>1$ one gets
\begin{align}
i_{\Phi_{k}(Z^{\prime k})}\omega &  =k\sum_{s=1}^{k-1}(-)^{k-1+s}%
\sum_{\substack{m_{1}+\cdots+m_{s}=k-1\\m_{1},\ldots,m_{s}>0}}\tbinom
{k-1}{m_{1},\cdots,m_{s}}i_{Z^{\prime}}^{m_{1}}i_{\Delta}^{m_{1}}\cdots
i_{Z^{\prime}}^{m_{s}}i_{\Delta}^{m_{s}-1}i_{\Delta Z^{\prime}}\omega
\nonumber\\
&  =k\overline{\mathrm{id}}\left(  i_{\Phi_{k-1}(Z^{\prime k-1})}i_{\Delta
Z^{\prime}}\omega\right)  . \label{18}%
\end{align}
Now let $Z^{\prime}$ be locally given by $Z^{\prime}=Z^{\alpha}\otimes
V_{\alpha}^{\prime}$, $Z^{\alpha}\in\overline{\Lambda}{}{}^{\prime}$, so that
$\Delta Z$ is locally given by $\Delta Z=W_{\beta}^{\alpha}du^{\beta}\otimes
V_{\alpha}^{\prime}$, where $W_{\beta}^{\alpha}:=\Delta_{\beta}^{j}%
i(\partial_{j})Z^{\alpha}$. Similarly, let $\Phi_{\ell}(Z^{\prime\ell})$ be
locally given by $\Phi_{\ell}(Z^{\prime\ell})=\Phi_{\ell}^{\alpha}\otimes
V_{\alpha}^{\prime}$, $\Phi_{\ell}^{\alpha}\in\overline{\Lambda}{}{}$. It
follows from (\ref{18}) that
\[
\Phi_{k}^{\alpha}=k\Phi_{k-1}^{\beta}\overline{\mathrm{id}}(W_{\beta}^{\alpha
})=\cdots=k!\Phi_{1}^{\alpha_{1}}\overline{\mathrm{id}}(W_{\alpha_{1}}%
^{\alpha_{2}}W_{\alpha_{2}}^{\alpha_{3}}\cdots W_{\alpha_{k-2}}^{\alpha_{k-1}%
}W_{\alpha_{k-1}}^{\alpha})=k!\overline{\mathrm{id}}(Z^{\alpha_{1}}%
W_{\alpha_{1}}^{\alpha_{2}}W_{\alpha_{2}}^{\alpha_{3}}\cdots W_{\alpha_{k-2}%
}^{\alpha_{k-1}}W_{\alpha_{k-1}}^{\alpha})
\]
so that
\[
\Phi_{k}(Z^{\prime k})=k!\,\overline{\mathrm{id}}\left(  i_{Z^{\prime}%
}i_{\Delta Z^{\prime}}^{k-2}\Delta Z^{\prime}\right)  .
\]

\end{proof}

\section{On the Homotopy Lie-Rinehart Algebra of a Presymplectic
Manifold\label{SHLRPSF}}

The key idea behind secondary calculus \cite{v98,v01} is to intepret
characteristic cohomologies of an involutive distribution as geometric
structures on the space $\boldsymbol{P}$ of integral manifolds. In Section
\ref{HAF}, I provided two examples of this, namely $\boldsymbol{C}_{%
\mathscr{F}%
}^{\infty}:=H(\overline{\Lambda},d)$ and $\boldsymbol{\mathfrak{X}}_{%
\mathscr{F}%
}:=H(\overline{\Lambda}\otimes\overline{\mathfrak{X}},\overline{d})$. Within
secondary calculus, they are interpreted as functions and vector fields on
$\boldsymbol{P}$, respectively. As I have already remarked, Theorem
\ref{Theorem1} supports this interpretation. I will now discuss more
supporting facts. {Recall that a \emph{presymplectic manifold} $(M,\Omega)$ is
a smooth manifold together with a constant rank, closed $2$-form $\Omega$.
Typical examples of presymplectic manifolds come from symplectic geometry.
Namely, let $M$ be a submanifold in a symplectic manifold $(N,\omega)$. Then,
$M$ together with the restricted $2$-form $\omega|_{N}$, is (almost
everywhere, locally) a presymplectic manifold. Thus, let $(M,\Omega)$ be a
presymplectic manifold}, let $C$ be its characteristic {(involutive)}
distribution, i.e. a vector field $X$ is in $C\mathfrak{X}$ if $i_{X}\Omega
=0$, and $%
\mathscr{F}%
$ its integral foliation. The two form $\Omega$ is naturally interpreted as if
it were a genuine symplectic structure on the space $\tilde{\boldsymbol{P}}$
of leaves of $C$
(for instance, when $\tilde{\boldsymbol{P}}$ is a manifold and the projection
$\pi:M\longrightarrow\tilde{\boldsymbol{P}}$ is a submersion, then
$\Omega:=\pi^{\ast}\Omega_{0}$ for a unique symplectic form on $\tilde
{\boldsymbol{P}}$). This statement can be given the more precise formulation
of Theorem \ref{Theorem2} below. Before stating it, I give some definitions.
First of all notice that, by definition, $\Omega\in C\Lambda^{2}$. Moreover,
it follows from $d\Omega=0$, that
\[
\overline{d}\Omega=d_{2}\Omega=d_{3}\Omega=0.
\]
Now, as above, chose a distribution $V$ which is complementary to $C$. There
is a unique bivector $P\in\Lambda^{2}\overline{\mathfrak{X}}$
\textquotedblleft inverting $\omega$ on $\overline{\mathfrak{X}}%
$\textquotedblright. Clearly, $\overline{d}P=0$. However, as discussed in
\cite{op05}, $P$ is Poisson iff $R=0$, i.e., $V$ is involutive as well.
Nonetheless, it defines an isomorhism
\[
\sharp:\overline{\Lambda}\otimes C\Lambda^{1}\longrightarrow\overline{\Lambda
}\otimes\overline{\mathfrak{X}}%
\]
of $\overline{\Lambda}$-modules in an obvious way. For $\omega_{1},\omega
_{2}\in\overline{\Lambda}\otimes C\Lambda^{1}$, put
\[
\langle\omega_{1}|\omega_{2}\rangle_{\Omega}:=\langle\omega_{1}|\sharp
(\omega_{2})\rangle.
\]

\begin{theorem}
\label{Theorem2}
\end{theorem}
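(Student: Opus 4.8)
First I must record that, as reproduced here, Theorem~\ref{Theorem2} carries no content: no hypotheses and no conclusion appear inside the theorem environment. Strictly speaking there is then no proposition whose proof I can plan, and I will not manufacture one. All that is genuinely fixed at this point are the data the section has just assembled: a presymplectic $(M,\Omega)$ with $\Omega\in C\Lambda^{2}$ satisfying $\overline{d}\Omega=d_{2}\Omega=d_{3}\Omega=0$, a choice of complementary distribution $V$, the induced bivector $P\in\Lambda^{2}\overline{\mathfrak{X}}$ with $\overline{d}P=0$ (Poisson iff $R=0$), the musical isomorphism $\sharp:\overline{\Lambda}\otimes C\Lambda^{1}\longrightarrow\overline{\Lambda}\otimes\overline{\mathfrak{X}}$ of $\overline{\Lambda}$-modules, and the pairing $\langle\omega_{1}|\omega_{2}\rangle_{\Omega}:=\langle\omega_{1}|\sharp(\omega_{2})\rangle$. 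What follows is therefore only a contingent plan, read off from the declared aim of the section; it cannot be made rigorous until the actual statement is supplied.

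Were the intended assertion the one announced in the abstract and introduction, namely the existence of a \emph{canonical morphism from the (Oh--Park) SH algebra of longitudinal forms to the $LR_{\infty}[1]$-algebra of the foliation}, built out of $\sharp$, then the plan would be to argue \emph{dually}, through the multi-differential / Chevalley--Eilenberg description of Section~\ref{SHDGLR}. Concretely, I would use Definition~\ref{Def} and Theorem~\ref{TheorMor}: rather than verifying the infinitely many bracket-level identities packaged in Formula~(\ref{25}), I would exhibit the single (degree~$0$, unital) algebra morphism $\psi$ between the two $\mathrm{Sym}_{A}(\cdot,A)$-algebras that is induced by $\sharp$, check the normalization $p\circ\psi=p$, and reduce everything to the one formal identity $\psi\circ D=D'\circ\psi$. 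At the unary level this is immediate, since $\overline{d}P=0$ forces $\sharp$ to intertwine the two copies of $\overline{d}=\{\,|\,\cdot\,\}_{1}$. The binary and ternary levels should then collapse, via Fr\"olicher--Nijenhuis identities (\ref{f1})--(\ref{f6}) and the closedness relations $d_{2}\Omega=d_{3}\Omega=0$, onto the explicit brackets $\{Z_{1},Z_{2}\}=-(-)^{Z_{1}}[\![Z_{1},Z_{2}]\!]+[[R,Z_{1}]_{\mathrm{nr}},Z_{2}]_{\mathrm{nr}}$ and $\{Z_{1},Z_{2},Z_{3}\}=-[[[R,Z_{1}]_{\mathrm{nr}},Z_{2}]_{\mathrm{nr}},Z_{3}]_{\mathrm{nr}}$ of $\mathscr{L}$.

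The main obstacle, however, is prior to all calculation: absent the statement I cannot even fix whether the claimed map is a strict morphism, an isomorphism, or merely a quasi-isomorphism in cohomology, nor can I pin down the precise source structure attributed to Oh--Park (in particular whether its higher brackets vanish beyond arity $2$ or persist to all orders, whereas $\mathscr{L}$ truncates at arity $3$). Once that is settled, the genuinely substantive step would be to reconcile these differing arities and to control the Koszul signs; and the cleanest way to sidestep the combinatorics of (\ref{25}) is exactly the reduction above to a single multi-differential algebra morphism, after which Definition~\ref{Def} (and the same mechanism that produced $\mathscr{L}$ from $d$ via Corollary~\ref{Cor}) delivers the morphism for free. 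But I stress again that this is reconstruction, not a proof plan for the stated theorem, which the excerpt leaves blank.
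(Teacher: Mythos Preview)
You are right that the theorem environment is empty as reproduced; this is a formatting slip in the paper. The intended content is the enumerated list that immediately follows the \texttt{\textbackslash end\{theorem\}}: (1) a canonical graded Poisson bracket on the leafwise cohomology $\boldsymbol{C}^{\infty}_{\mathscr{F}}$, given by $\boldsymbol{\{}[\lambda],[\lambda']\boldsymbol{\}}:=[\langle d_{1}\lambda\,|\,d_{1}\lambda'\rangle_{\Omega}]$ and independent of $V$; and (2) a canonical graded Lie-algebra morphism $\boldsymbol{X}:\boldsymbol{C}^{\infty}_{\mathscr{F}}\to\boldsymbol{\mathfrak{X}}_{\mathscr{F}}$, $[\lambda]\mapsto[\sharp d_{1}\lambda]$, also independent of $V$. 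So Theorem~\ref{Theorem2} is a \emph{cohomology-level} statement, not a cochain-level one. The paper does not prove it directly: it is stated and then immediately superseded by Theorem~\ref{Theorem3}, whose cochain-level structures and morphism are declared to induce those of Theorem~\ref{Theorem2} in cohomology. Part~\ref{1.3} of Theorem~\ref{Theorem3} is attributed to Oh--Park, and only Part~\ref{2.3} is proved in the paper.

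Your reconstruction therefore misidentifies the target: you are sketching a plan for Theorem~\ref{Theorem3}, not Theorem~\ref{Theorem2}. More importantly, the mechanism you propose would not apply. Definition~\ref{Def} and Theorem~\ref{TheorMor} concern morphisms between two $LR_{\infty}[1]$-algebras \emph{over the same graded algebra $A$}, detected dually as algebra maps between their $\mathrm{Sym}_{A}(\cdot,A)$'s. The Oh--Park structure $\mathscr{P}$ lives on $\overline{\Lambda}[1]$ as a bare $L_{\infty}[1]$-algebra (with brackets of all arities, incidentally, not truncated at $3$); it has no anchors and is not of the form $\mathrm{Sym}_{A}(\mathcal{P},A)$ for a projective $A$-module $\mathcal{P}$. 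So there is no dual algebra morphism $\psi$ to exhibit, and the reduction to a single identity $\psi\circ D=D'\circ\psi$ is unavailable here.

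The paper's actual route to Part~\ref{2.3} is a direct verification at the level of $L_{\infty}[1]$-morphism components: one sets $X_{k}(\lambda_{1},\ldots,\lambda_{k})(f):=\{\lambda_{1},\ldots,\lambda_{k},f\}^{\mathrm{op}}_{k+1}$, proves an auxiliary identity (Lemma~\ref{21}) expressing $\{Z_{k}\,|\,\lambda'\}$ in terms of the Oh--Park bracket plus curvature corrections, and then checks $K_{X}^{m}(\lambda^{m})=0$ by reducing both sides to the $(m{+}1)$st Jacobiator of $\mathscr{P}$. Theorem~\ref{Theorem2} then drops out in cohomology.
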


\begin{enumerate}
\item \label{1.2}Cohomology $\boldsymbol{C}_{%
\mathscr{F}%
}^{\infty}$ possesses a canonical structure of graded Poisson algebra
$\boldsymbol{\{}{}\cdot{},{}\cdot{}\boldsymbol{\}}$ given by%
\[
\boldsymbol{\{}{}[\lambda]{},[\lambda^{\prime}]\boldsymbol{\}}:=[\langle
d_{1}\lambda|d_{1}\lambda^{\prime}\rangle_{\Omega}]\in\boldsymbol{\mathfrak{X}%
_{%
\mathscr{F}%
}},\quad\lbrack\lambda],[\lambda^{\prime}]\in\boldsymbol{C}_{%
\mathscr{F}%
}^{\infty},\text{\quad}\lambda,\lambda^{\prime}\in\overline{\Lambda}.
\]
The brackets $\boldsymbol{\{}{}\cdot{},{}\cdot{}\boldsymbol{\}}$ is
independent of the choice of $V$.

\item \label{2.2}There is a canonical morphism of graded Lie algebras
$\boldsymbol{X}:(\boldsymbol{C}_{%
\mathscr{F}%
}^{\infty},\boldsymbol{\{}{}\cdot{},{}\cdot{}\boldsymbol{\}})\longrightarrow
(\boldsymbol{\mathfrak{X}}_{%
\mathscr{F}%
},\boldsymbol{[}\cdot{},{}\cdot{}\boldsymbol{]})$ given by
\[
\boldsymbol{X}:\boldsymbol{C}_{%
\mathscr{F}%
}^{\infty}\ni\lbrack\lambda]\longmapsto\lbrack\sharp d_{1}\lambda
]\in\boldsymbol{\mathfrak{X}}_{%
\mathscr{F}%
},\quad\lbrack\lambda]\in\boldsymbol{C}_{%
\mathscr{F}%
}^{\infty},\text{\quad}\lambda\in\overline{\Lambda}.
\]
The morphism $\boldsymbol{X}_{%
\mathscr{F}%
}$ is independent of the choice of $V$.
\end{enumerate}

The graded Poisson algebra of Theorem \ref{Theorem2} actually comes from an
$L_{\infty}[1]$-algebra structure $\mathscr{P}$ in $\overline{\Lambda}$, and
the morphism $\boldsymbol{X}$ comes from a morphism of $L_{\infty}%
[1]$-algebras $(\overline{\Lambda},\mathscr{P})\longrightarrow(\mathcal{Q}%
,\mathscr{Q})$ (here, $\mathscr{Q}$ is the canonical $L_{\infty}[1]$-algebra
structure on $\mathcal{Q}=\overline{\Lambda}\otimes\overline{\mathfrak{X}}%
[1]$), according to the following

\begin{theorem}
\label{Theorem3}

\begin{enumerate}
\item \label{1.3}The vector space $\overline{\Lambda}[1]$ possesses a
structure of $L_{\infty}[1]$-algebra $\mathscr{L}=\{\{{}\cdot{},\cdots,{}%
\cdot{}\}_{k}^{\mathrm{op}},\ k\in\mathbb{N}\}$ such that $\{{}{}\cdot{}%
\}_{1}^{\mathrm{op}}=\overline{d}$,

\item \label{2.3}There is a morphism of $L_{\infty}[1]$-algebras
$X:(\overline{\Lambda},\mathscr{P})\longrightarrow(\mathcal{Q},\mathscr{Q})$.
\end{enumerate}

Moreover, the structure in \ref{1.3} (resp., the morphism in \ref{2.3}),
induce the structure in \ref{1.2} (resp., the morphism in \ref{2.2}) of
Theorem \ref{Theorem2} in cohomology, up to a sign (due to the chosen sign conventions).
\end{theorem}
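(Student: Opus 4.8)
The plan is to produce both objects by hand from the data already at play in this section --- the longitudinal differential $\overline{d}$, the presymplectic form $\Omega$, the ``partial inverse'' bivector $P$ (equivalently the isomorphism $\sharp$), and the curvature $R$ of the chosen splitting $V$ --- and to verify the structural identities with the Fr\"{o}licher--Nijenhuis and Nijenhuis--Richardson calculus, i.e.\ formulas (\ref{f1})--(\ref{f6}), together with the relations in Tables (\ref{Table}) and (\ref{Table'}). Part \ref{1.3} is essentially the $L_{\infty}$-algebra of Oh and Park \cite{op05}, rewritten in the sign convention of this paper (whence the superscript $\mathrm{op}$); one may either quote it, or re-derive it as follows. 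On $\overline{\Lambda}[1]$ set $\{\lambda\}_{1}^{\mathrm{op}}:=\overline{d}\lambda$; let $\{\lambda_{1},\lambda_{2}\}_{2}^{\mathrm{op}}$ be the Koszul-type bracket built from $d_{1}$ and $\sharp$ whose leading term is $\pm\langle d_{1}\lambda_{1}\,|\,d_{1}\lambda_{2}\rangle_{\Omega}$; let $\{\lambda_{1},\lambda_{2},\lambda_{3}\}_{3}^{\mathrm{op}}$ be a suitably normalized triple contraction of $R$ against $\sharp d_{1}\lambda_{1},\sharp d_{1}\lambda_{2},\sharp d_{1}\lambda_{3}$; and put $\{\cdot,\ldots,\cdot\}_{k}^{\mathrm{op}}:=0$ for $k>3$. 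Unwinding the Jacobiators $J^{k}$ of this family via (\ref{f1})--(\ref{f6}) and the Bianchi identities $[\![P^{C},R]\!]=[\![P^{V},R]\!]=0$, they reduce to the closedness relations $\overline{d}\,\Omega=d_{1}\Omega=d_{2}\Omega=0$ (which hold because $d\Omega=0$ and $\Omega\in C\Lambda^{2}$ force the three bidegree-homogeneous components of $d\Omega$ to vanish separately) together with $[d_{1},d_{1}]=2L_{R}=-2[d_{0},d_{2}]$, which follow from $d^{2}=0$ via Table (\ref{Table'}); hence $J^{k}=0$ for all $k$. Equivalently, these brackets are the Voronov derived brackets \cite{v05} of $P$ inside the homotopy Schouten algebra attached to $\mathscr{Q}$ (cf.\ the second appendix), so that $J^{k}=0$ is forced by $\overline{d}^{\,2}=0$ once the curvature identity for $P$ is checked.

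For part \ref{2.3} I would take $X=\{X_{k},\ k\in\mathbb{N}\}$ with leading component $X_{1}:=\pm\,\sharp\circ d_{1}:\overline{\Lambda}[1]\longrightarrow\mathcal{Q}$ and define the higher components $X_{k}$ ($k\geq2$) recursively, each built out of $R$, $\sharp$ and $d_{1}$ and vanishing once $k$ is large enough (all relevant sums terminate because the brackets and anchors of $\mathscr{Q}$ vanish above arity $3$, by the Theorem of Section \ref{SHLRF}). The first morphism equation $K_{X}^{1}=0$ is the assertion that $X_{1}$ is a chain map, $\overline{d}(\sharp d_{1}\lambda)=\sharp d_{1}(\overline{d}\lambda)$; this holds because $\overline{d}P=0$ makes $\overline{d}$ commute with $\sharp$ (Remark \ref{RemExt}) and $[d_{0},d_{1}]=0$. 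The equation $K_{X}^{2}=0$ says exactly that $X_{1}$ intertwines the two binary brackets up to the homotopy $X_{2}$: one expands $\{X_{1}\lambda_{1},X_{1}\lambda_{2}\}_{2}$ using the formula for $\{\cdot,\cdot\}_{2}$ on $\mathcal{Q}$ and formula (\ref{f5}), and the residual $R$-terms on the two sides are absorbed by $X_{2}$ and by $\{\cdot,\cdot\}_{2}^{\mathrm{op}}$. The remaining identities $K_{X}^{k}=0$ are checked by the same bookkeeping, the higher $X_{k}$ being designed precisely to cancel the $R$-contributions; the combinatorics and signs are the only real work here.

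For the ``moreover'' clause: since $\{\cdot\}_{1}^{\mathrm{op}}=\overline{d}$ on $\overline{\Lambda}[1]$ and $\{\cdot\}_{1}=\overline{d}$ on $\mathcal{Q}$, everything descends to characteristic cohomology. The standard fact that an $L_{\infty}[1]$-algebra whose unary bracket is the differential induces a genuine graded Lie algebra on its cohomology, with all higher brackets inducing zero, yields the graded Poisson bracket $\boldsymbol{\{}\cdot,\cdot\boldsymbol{\}}$ on $\boldsymbol{C}_{\mathscr{F}}^{\infty}$; evaluating the induced binary bracket on $\overline{d}$-cocycles and discarding the $\overline{d}$-exact correction and the $R$-dependent terms (both zero in cohomology) identifies it with $[\langle d_{1}\lambda\,|\,d_{1}\lambda'\rangle_{\Omega}]$, i.e.\ part \ref{1.2} of Theorem \ref{Theorem2}, up to the global sign coming from the conventions. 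Likewise $X$ descends to $[\lambda]\mapsto[X_{1}(\lambda)]=[\pm\sharp d_{1}\lambda]$, which is the morphism $\boldsymbol{X}$ of part \ref{2.2} up to sign, and the chain $K_{X}^{k}=0$ guarantees it is a morphism of the graded Lie algebras in cohomology.

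The step I expect to be the main obstacle is the explicit construction of the higher components $X_{k}$ --- and, if one re-derives it rather than quoting \cite{op05}, the ternary bracket $\{\cdot,\cdot,\cdot\}_{3}^{\mathrm{op}}$ --- together with the verification of the full list $K_{X}^{k}=0$: after polarizing (evaluating on equal even arguments, as elsewhere in the paper) these become Fr\"{o}licher--Nijenhuis and Bianchi manipulations in which keeping track of signs and binomial coefficients is delicate. By contrast the descent to Theorem \ref{Theorem2} in cohomology is routine once the cochain-level identities are in hand.
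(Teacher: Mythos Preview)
Your proposal rests on a mistaken premise: the Oh--Park brackets $\{\cdot,\ldots,\cdot\}_{k}^{\mathrm{op}}$ do \emph{not} vanish for $k>3$. The paper recalls them explicitly in formula (\ref{19}): for each $k$ the bracket is a sum over $S_{k}$ of terms $\langle d_{1}\lambda_{\sigma(1)}\,|\,(i_{R^{\sharp}}\lambda_{\sigma(2)}\circ\cdots\circ i_{R^{\sharp}}\lambda_{\sigma(k-1)})(d_{1}\lambda_{\sigma(k)})\rangle_{\Omega}$, involving $k-2$ compositions of endomorphisms $i_{R^{\sharp}}\lambda\in\mathrm{End}_{\overline{\Lambda}}(\overline{\Lambda}\otimes C\Lambda^{1})$. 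These are nonzero for all $k$ whenever $R\neq 0$. You appear to have conflated $\mathscr{L}$ with the $LR_{\infty}[1]$-structure $\mathscr{Q}$ on $\mathcal{Q}$, which \emph{does} truncate at arity $3$ by the Theorem of Section \ref{SHLRF}; but $\mathscr{L}$ lives on the ``function'' side $\overline{\Lambda}[1]$ and has a different origin. Your description of the ternary bracket as a contraction of $R$ against the three $\sharp d_{1}\lambda_{i}$ is also inaccurate: the middle arguments enter through $i_{R^{\sharp}}\lambda$, i.e.\ through $\lambda$ itself, not through $\sharp d_{1}\lambda$.

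Consequently your plan for Part \ref{2.3} cannot work as stated: the $X_{k}$ cannot be chosen to vanish for large $k$, and your recursive construction of $X_{k}$ ``to cancel $R$-contributions'' has no terminating mechanism. The paper's approach is quite different and much cleaner. It \emph{defines} $X_{k}$ directly from the Oh--Park brackets by $X_{k}(\lambda_{1},\ldots,\lambda_{k})(f):=\{\lambda_{1},\ldots,\lambda_{k},f\}_{k+1}^{\mathrm{op}}$ for $f\in C^{\infty}(M)$, so that all the higher $X_{k}$ are given at once and are nonzero in general. The key technical step is Lemma \ref{21}, which expresses the anchor $\{Z_{k}\,|\,\lambda'\}$ (with $Z_{k}=X_{k}(\lambda^{k})$) in terms of $\{\lambda^{k},\lambda'\}^{\mathrm{op}}$ plus curvature corrections. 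Using it, the paper evaluates $K_{X}^{m}(\lambda^{m})$ on a function $f$ and shows the result is exactly the $(m{+}1)$st Jacobiator of $\mathscr{L}$ evaluated at $(\lambda^{m},f)$, hence zero. In other words, the morphism identities for $X$ are \emph{reduced to} the $L_{\infty}$ identities for $\mathscr{L}$ rather than checked independently; this is the idea you are missing.
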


Part \ref{1.3}{} of Theorem \ref{Theorem3} has been proved by Oh and Park in
\cite{op05} (which motivates the notation for the brackets in $\mathscr{L}$).
In the remaining part of this section, I prove Part \ref{2.3}. First, I recall
the definition of $\mathscr{L}$ \cite{op05}. Let $R^{\sharp}$ be the tensor
obtained by contracting one lower index of $R$ with one upper index of $P$.
Interpret $R^{\sharp}$ as a $\mathrm{End\,}C\Lambda^{1}$-valued derivation of
$C^{\infty}(M)$: $R^{\sharp}\in\mathrm{End\,}C\Lambda^{1}\otimes
C\mathfrak{X}$. If $\lambda\in\overline{\Lambda}[1]$ I will consider
\[
i_{R^{\sharp}}\lambda\in\overline{\Lambda}\otimes\mathrm{End\,}C\Lambda
^{1}\simeq\mathrm{End}_{\overline{\Lambda}}\,(\overline{\Lambda}\otimes
C\Lambda^{1}).
\]
Then
\begin{equation}
\{\lambda_{1},\ldots,\lambda_{k}\}_{k}^{\mathrm{op}}:=\sum_{\sigma\in S_{k}%
}\alpha(\sigma,\boldsymbol{\lambda})\langle d_{1}\lambda_{\sigma
(1)}|(i_{R^{\sharp}}\lambda_{\sigma(2)}\circ\cdots\circ i_{R^{\sharp}}%
\lambda_{\sigma(k-1)})(d_{1}\lambda_{\sigma(k)})\rangle_{\Omega} \label{19}%
\end{equation}
For all $\lambda_{1},\ldots,\lambda_{k}\in\overline{\Lambda}[1]$.

Now, I define the morphism $X:(\overline{\Lambda},\mathscr{P})\longrightarrow
(\mathcal{Q},\mathscr{Q})$. It is a \emph{homotopy version} of the standard
morphism sending Hamiltonians to their Hamiltonian vector fields on a
symplectic manifold {and, to my knolewdge, it is defined here for the first
time}. Define maps
\[
X_{k}:\overline{\Lambda}[1]^{\times k}\longrightarrow\mathcal{Q}%
\]
via
\[
X_{k}(\lambda_{1},\ldots,\lambda_{k})(f):=\{\lambda_{1},\ldots,\lambda
_{k},f\}_{k+1}^{\mathrm{op}}%
\]
It follows from (\ref{19}) that $X_{k}(\lambda_{1},\ldots,\lambda_{k}%
)\in\overline{\Lambda}\otimes\overline{\mathfrak{X}}$ so that $X_{k}$ is a
well defined degree $0$ map for all $k$.

\begin{lemma}
\label{21}Let $\lambda\in\overline{\Lambda}[1]$ be even. Put $Z_{k}%
:=X_{k}(\lambda^{k})$. Then%
\begin{equation}
\{Z_{k}|\lambda^{\prime}\}=\{\lambda^{k},\lambda^{\prime}\}^{\mathrm{op}}%
-\sum_{\substack{i+j=k\\i,j>0}}\tbinom{i+j}{i}i_{Z_{i}}i_{Z_{j}}i_{R}%
\lambda^{\prime} \label{22}%
\end{equation}
for all $\lambda^{\prime}\in\overline{\Lambda}$.
\end{lemma}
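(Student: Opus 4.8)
The plan is to compute $\{Z_k|\lambda'\}$ directly from the formula for the first anchor established earlier, namely $\{Z|\lambda'\} = -(-)^{Z}L_Z\lambda' + i_{[R,Z]_{\mathrm{nr}}}\lambda'$, and then expand $L_{Z_k}$ and $[R,Z_k]_{\mathrm{nr}}$ using the definition $Z_k = X_k(\lambda^k)$ together with the Oh--Park formula \eqref{19} for $\{\lambda_1,\ldots,\lambda_{k+1}\}^{\mathrm{op}}_{k+1}$. Since $Z_k$ is even (because $\lambda$ is even and $X_k$ has degree $0$), the sign $(-)^{Z_k}$ is $+1$, so I must show $-L_{Z_k}\lambda' + i_{[R,Z_k]_{\mathrm{nr}}}\lambda'$ equals the right-hand side of \eqref{22}. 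First I would unwind $Z_k(f) = \{\lambda^k,f\}^{\mathrm{op}}_{k+1} = k!\,\langle d_1\lambda\,|\,(i_{R^\sharp}\lambda)^{\circ(k-1)}(d_1 f)\rangle_\Omega$ (the combinatorial factor coming from the sum over $S_k$ with all arguments equal), and use $\langle\,\cdot\,|\,\cdot\,\rangle_\Omega = \langle\,\cdot\,|\sharp(\,\cdot\,)\rangle$ to write $Z_k = \sharp$ applied to a suitable $C\Lambda^1$-valued expression built from $d_1\lambda$ and the endomorphisms $i_{R^\sharp}\lambda$.

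The key step is to recognize the recursive structure: $i_{R^\sharp}\lambda$ acts on $\overline\Lambda\otimes C\Lambda^1$, and contracting $R$ (rather than $R^\sharp$) against the $\overline{\mathfrak X}$-slot of $Z_i$ produces, after applying $\sharp^{-1}$, exactly one more factor of $i_{R^\sharp}\lambda$. Concretely, I expect the identity $i_{Z_i} i_R = (\text{const})\, i_{R^\sharp}\lambda \circ (\text{lower data})$ to let me reorganize the quadratic terms $\sum_{i+j=k}\binom{i+j}{i} i_{Z_i} i_{Z_j} i_R \lambda'$ into a telescoping/regrouping of the string $(i_{R^\sharp}\lambda)^{\circ(k-1)}$, matching the $L_{Z_k}$ and $i_{[R,Z_k]_{\mathrm{nr}}}$ terms against $\{\lambda^k,\lambda'\}^{\mathrm{op}}$. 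I would do this by working locally: write $R = R^i\otimes\partial_i$ with $R^i\in C\Lambda^2$, write $P$ in local transversal frame, and express everything in terms of the components, where the Fröhlicher--Nijenhuis brackets $[R,Z]_{\mathrm{nr}}$ and the insertions $i_{Z_i}i_{Z_j}$ become explicit contractions; the combinatorial identity $\binom{k}{i}$ summed appropriately should reproduce the $k!$ in $\{\lambda^k,\lambda'\}^{\mathrm{op}}$ after accounting for the unshuffles in \eqref{19}.

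The main obstacle will be bookkeeping of signs and binomial coefficients: reconciling the symmetrization over $S_{k+1}$ implicit in $\{\lambda^k,\lambda'\}^{\mathrm{op}}_{k+1}$ with the asymmetric split $i+j=k$ on the right of \eqref{22}, where $\lambda'$ plays a distinguished role while the $k$ copies of $\lambda$ are symmetrized. I expect the cleanest route is to prove \eqref{22} by induction on $k$: the base case $k=1$ is $\{Z_1|\lambda'\} = \{\lambda,\lambda'\}^{\mathrm{op}}$, which is immediate from $Z_1(f) = \{\lambda,f\}^{\mathrm{op}}_2 = \langle d_1\lambda|d_1 f\rangle_\Omega = \langle d_1\lambda|\sharp d_1 f\rangle$ together with (anti)symmetry of $\langle\,\cdot\,|\,\cdot\,\rangle_\Omega$ and the anchor formula with $[R,Z_1]_{\mathrm{nr}}$-term vanishing for degree reasons; and the inductive step uses the derivation property of $L_{Z_k}$ and the recursive definition of $X_k$ through \eqref{19} to peel off one factor $i_{R^\sharp}\lambda$, the extra $i_{Z_i}i_{Z_j}i_R\lambda'$ terms arising precisely from the $i_R$ appearing in $d_1 = d^V + 2i_R$ when $d_1$ hits the already-constructed lower $Z_j$'s. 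I would relegate the detailed sign verification to the computation itself and only sketch it here.
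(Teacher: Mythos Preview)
Your approach diverges from the paper's and contains a gap. The paper's proof rests on one observation you do not make: both sides of \eqref{22} are derivations in the argument $\lambda'$ (for the left side this is the anchor property; for the right side, note that $i_{Z_j}\mu=0$ for $\mu\in\overline\Lambda$, so $i_{Z_i}i_{Z_j}i_R$ restricts to a derivation on $\overline\Lambda$, and the Oh--Park bracket is checked to behave likewise). Hence it suffices to verify \eqref{22} on the generators $\lambda'=f$ and $\lambda'=\overline{d}f$ for $f\in C^\infty(M)$. On functions the identity is the definition of $Z_k$; on $\overline{d}f$ the paper computes both sides directly from \eqref{19}, using $\overline{d}P=0$ and the Bianchi identities, and matches the extra sum over $r+s=k-2$ in $\{\lambda^k,\overline{d}f\}^{\mathrm{op}}$ with $\sum_{i+j=k}\binom{i+j}{i}i_{Z_i}i_{Z_j}R(f)$.

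Your direct/inductive route is in principle viable but much heavier, and your base case is wrong as stated: the term $i_{[R,Z_1]_{\mathrm{nr}}}\lambda'$ does \emph{not} vanish for degree reasons. Indeed $i_{Z_1}R\in\overline\Lambda\otimes C\Lambda^1\otimes C\mathfrak{X}$ contributes nontrivially when inserted into $\lambda'\in\overline\Lambda^{\geq 1}$; this is exactly the projection correction in the identity $\{Z|\lambda'\}=-(-)^Z\overline{L_Z\lambda'}$. So already for $k=1$ one has $\{Z_1|\lambda'\}\neq -L_{Z_1}\lambda'$ in general, and the equality $\{Z_1|\lambda'\}=\{\lambda,\lambda'\}^{\mathrm{op}}$ requires that correction. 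Without the derivation trick you would have to track this term for general $\lambda'$, which undermines your induction from the start.
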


\begin{proof}
Both hand sides of (\ref{22}) are derivations in the argument $\lambda
^{\prime}$. Therefore, it is enough to check (\ref{22}) on generators of
$\overline{\Lambda}$, i.e., for $\lambda^{\prime}=f$ and $\overline{d}f$ for
$f\in C^{\infty}(M)$. When $\lambda^{\prime}=f$, (\ref{22}) is trivially true
by definition of $Z_{k}$. Now, it easily follows from $\overline{d}P=0$ and
the Bianchi identities that
\[
\{Z_{k}|\overline{d}f\}=k!\langle d_{1}\overline{d}f|(i_{R^{\sharp}}%
\lambda)^{k-1}(d_{1}\lambda)\rangle_{\Omega}+k!\langle d_{1}\lambda
|(i_{R^{\sharp}}\lambda)^{k-1}(d_{1}\overline{d}f)\rangle_{\Omega}.
\]
On the other hand%
\begin{align*}
\{\lambda^{k},\overline{d}f\}^{\mathrm{op}}  &  =k!\langle d_{1}\overline
{d}f|(i_{R^{\sharp}}\lambda)^{k-1}(d_{1}\lambda)\rangle_{\Omega}+k!\langle
d_{1}\lambda|(i_{R^{\sharp}}\lambda)^{k-1}(d_{1}\overline{d}f)\rangle_{\Omega
}\\
&  \quad\ +\sum_{r+s=k-2}\tbinom{r+s}{r}\langle d_{1}\lambda|(i_{R^{\sharp}%
}\lambda)^{r}\circ(i_{R^{\sharp}}\overline{d}f)\circ(i_{R^{\sharp}}%
\lambda)^{s}(d_{1}\lambda)\rangle_{\Omega}\\
&  =\{Z_{k}|\overline{d}f\}+\sum_{\substack{i+j=k\\i,j>0}}\tbinom{i+j}%
{i}i_{Z_{i}}i_{Z_{j}}R(f),
\end{align*}
where the final equality can be easily checked using, for instance, local coordinates.
\end{proof}

\begin{proposition}
$X:=\{X_{k},\ k\in\mathbb{N}\}$ is a morphism of $L_{\infty}[1]$-algebras.
\end{proposition}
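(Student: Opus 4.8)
The plan is to verify directly the defining relations $K_X^k=0$, $k\in\mathbb{N}$, of a morphism of $L_\infty[1]$-algebras. The tools are: (i) the very definition $X_k(\lambda_1,\ldots,\lambda_k)(f)=\{\lambda_1,\ldots,\lambda_k,f\}^{\mathrm{op}}_{k+1}$; (ii) the $L_\infty[1]$-identities $J^k=0$ for the Oh--Park structure $\mathscr{P}$ on $\overline{\Lambda}[1]$, i.e. Part \ref{1.3} of Theorem \ref{Theorem3}; (iii) Lemma \ref{21}; and (iv) the explicit brackets and anchor of $(\mathcal{Q},\mathscr{Q})$ from Section \ref{SHLRF}, together with Lemma \ref{Lem} and the Fr\"olicher--Nijenhuis formulas \ref{f4}, \ref{f5}, \ref{f3}, \ref{f1}.

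First I would reduce the problem twice. Each $K_X^k$ is $\mathbb{R}$-multilinear and graded symmetric, so by the usual trick it suffices to show $K_X^k(\lambda^k)=0$ for $\lambda\in\overline{\Lambda}[1]$ an arbitrary even element. Second, $K_X^k(\lambda^k)$ is an element of $\mathcal{Q}=\overline{\Lambda}\otimes\overline{\mathfrak{X}}$, i.e. a form-valued transversal vector field, and such an element is determined by its action on functions (since $\overline{\mathfrak{X}}$ acts faithfully on $C^\infty(M)$ via the chosen splitting); writing $Z(f)$ for that action, consistently with the notation $X_k(\ldots)(f)$, it is therefore enough to prove $K_X^k(\lambda^k)(f)=0$ for every $f\in C^\infty(M)$. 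Put $Z_a:=X_a(\lambda^a)\in\mathcal{Q}$ for $a\ge1$ and $Z_0(f):=\overline{d}f$; then $Z_a(f)=\{\lambda^a,f\}^{\mathrm{op}}_{a+1}$ for all $a\ge0$. (For $k=1$ the claim is just that $X_1$ is a chain map, $X_1\overline{d}=\overline{d}X_1$.)

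Next I would expand the two sides. The ``inner bracket'' part of $K_X^k(\lambda^k)$, applied to $f$, equals $\sum_{i+j=k}\tbinom{k}{i}\{\{\lambda^i\}^{\mathrm{op}}_i,\lambda^j,f\}^{\mathrm{op}}_{j+2}$ by the definition of the $X_j$'s, and I would identify it with the ``$f$-outside'' part of the $(k+1)$-st Jacobiator $J^{k+1}$ of $\mathscr{P}$ evaluated on $(\lambda^k,f)$; since $J^{k+1}(\lambda^k,f)=0$, it equals minus the ``$f$-inside'' part, namely $-\sum_{a+b=k}\tbinom{k}{a}\{\{\lambda^a,f\}^{\mathrm{op}}_{a+1},\lambda^b\}^{\mathrm{op}}_{b+1}=-\sum_{a+b=k}\tbinom{k}{a}\{\lambda^b,Z_a(f)\}^{\mathrm{op}}_{b+1}$. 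Now apply Lemma \ref{21} with $\lambda':=Z_a(f)\in\overline{\Lambda}$: it rewrites each $\{\lambda^b,Z_a(f)\}^{\mathrm{op}}_{b+1}$ as the $\mathscr{Q}$-anchor $\{Z_b\,|\,Z_a(f)\}=-L_{Z_b}Z_a(f)+i_{[R,Z_b]_{\mathrm{nr}}}Z_a(f)$ plus the curvature correction $\sum_{p+q=b,\,p,q>0}\tbinom{b}{p}\,i_{Z_p}i_{Z_q}i_R\big(Z_a(f)\big)$. On the other side, the ``outer bracket'' part of $K_X^k(\lambda^k)$ involves only $\{\cdot\}_1=\overline{d}$, $\{\cdot,\cdot\}_2$ and $\{\cdot,\cdot,\cdot\}_3$ of the $Z_a$'s, because the higher brackets of $\mathscr{Q}$ vanish; substituting $\overline{d}Z_a=[\![P^C,Z_a]\!]-[R,Z_a]_{\mathrm{nr}}$ (Lemma \ref{Lem}), $\{Z_a,Z_b\}_2=-[\![Z_a,Z_b]\!]+[[R,Z_a]_{\mathrm{nr}},Z_b]_{\mathrm{nr}}$ and $\{Z_a,Z_b,Z_c\}_3=-[[[R,Z_a]_{\mathrm{nr}},Z_b]_{\mathrm{nr}},Z_c]_{\mathrm{nr}}$, and applying the resulting form-valued vector fields to $f$ by means of formulas \ref{f4}, \ref{f5}, \ref{f3}, \ref{f1}, both sides become expressions purely in terms of $i$, $L$ and the Fr\"olicher--Nijenhuis and Nijenhuis--Richardson brackets applied to the $Z_a$'s, $R$, $P^C$ and $f$, which I would then match.

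The main obstacle is the combinatorics and the signs: one has to evaluate the cardinalities of the unshuffle sets $S_{i,j}$ and $S^<_{k_1,\ldots,k_\ell}$ on equal arguments, track the Koszul signs, and --- the arithmetical heart of the argument --- check that the curvature corrections $\sum i_{Z_p}i_{Z_q}i_R(Z_a(f))$ produced by Lemma \ref{21} cancel exactly against the $[R,-]_{\mathrm{nr}}$-contributions of the binary and ternary brackets of $\mathscr{Q}$, with the $L$-terms accounting for the ``principal'' part $-[\![Z_a,Z_b]\!]$ and the anchor/differential compatibility relating $\{Z_b\,|\,\overline{d}f\}$ to $(\overline{d}Z_b)(f)$ taking care of the $a=0$ and $b=0$ boundary terms. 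I expect this matching to be organised by induction on $k$, following the same bookkeeping already carried out in the proofs of Lemma \ref{21} and of the theorem of Section \ref{SHLRF}, where precisely the passage from $[\![\cdot,\cdot]\!]$ to the twisted bracket $\{\cdot,\cdot\}_2$ and to the ternary bracket $\{\cdot,\cdot,\cdot\}_3$ is performed.
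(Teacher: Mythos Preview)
Your overall strategy coincides with the paper's: reduce to equal even arguments $\lambda$, evaluate on $f\in C^\infty(M)$, set $Z_a:=X_a(\lambda^a)$, and aim to recognise $K_X^k(\lambda^k)(f)$ as the $(k{+}1)$-st Jacobiator of the Oh--Park structure $\mathscr{P}$, which vanishes. The use of Lemma \ref{21} is also the same.

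The one substantive divergence is in how you treat the ``outer'' part $\{Z_{k_1},\ldots,Z_{k_r}\}(f)$. You propose to substitute the explicit Fr\"olicher--Nijenhuis formulas for $\overline{d}Z_a$, $\{Z_a,Z_b\}_2$, $\{Z_a,Z_b,Z_c\}_3$ and then match terms; you acknowledge this matching is the hard part and leave it as an expectation. The paper avoids this entirely by one clean observation: since $Z(f)=\{Z|f\}$ for $f\in C^\infty(M)$ (the higher anchors vanish on functions), one can write $\{Z_{k_1},\ldots,Z_{k_r}\}(f)=\{\{Z_{k_1},\ldots,Z_{k_r}\}\,|\,f\}$ and then apply the \emph{anchor Jacobiator identity} (the $L_\infty[1]$-module relation (\ref{Jac2}) for $\mathscr{M}$) to rewrite this as a sum of \emph{nested anchors} $\{Z_{\ldots}\,|\,\{Z_{\ldots}\,|\,f\}\}$. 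Because anchors above level $3$ vanish and the ternary anchor kills functions, only $r=1,2,3$ and, within those, very few terms survive. Lemma \ref{21} then turns each surviving anchor back into an Oh--Park bracket plus curvature corrections; the curvature pieces cancel on the nose against the ternary-anchor contributions, and what remains is literally the $(k{+}1)$-st Jacobiator of $\mathscr{P}$.

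So there is no error in your plan, but the ``expected'' Fr\"olicher--Nijenhuis matching you defer is precisely what the anchor-Jacobiator trick renders unnecessary. If you insert that step where you currently invoke formulas \ref{f4}--\ref{f1}, the proof closes without any residual combinatorics.
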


\begin{proof}
Let $\lambda\in\overline{\Lambda}[1]$ be an even element. I will prove that
\[
K_{X}^{m}(\lambda^{m})=0
\]
for all such $\lambda$. Now, for $f\in C^{\infty}(M)$,
\begin{align*}
K_{X}^{m}(\lambda^{m})(f)  &  =\sum_{j+k=m}\tbinom{j+k}{j}X_{k+1}%
(\{\lambda^{j}\}^{\mathrm{op}},\lambda^{k})(f)\\
&  \quad\ -\sum_{r=1}^{m}\sum_{\substack{k_{1}+\cdots+k_{r}=m\\0<k_{1}%
\leq\cdots\leq k_{r}}}C(k_{1},\ldots,k_{r})\{X_{k_{1}}(\lambda^{k_{1}}%
),\ldots,X_{k_{r}}(\lambda^{k_{r}})\}(f),
\end{align*}
where
\[
X_{k+1}(\{\lambda^{j}\}^{\mathrm{op}},\lambda^{k})(f)=\{\{\lambda
^{j}\}^{\mathrm{op}},\lambda^{k},f\}^{\mathrm{op}}.
\]
Now, put $Z_{k}:=X_{k}(\lambda^{k})$ for all $k$, and compute
\begin{align}
\{Z_{k_{1}},\ldots,Z_{k_{r}}\}(f)  &  =\{\{Z_{k_{1}},\ldots,Z_{k_{r}%
}\}|f\}\nonumber\\
&  =-\sum_{s+t=r}\sum_{\sigma\in S_{s,t}}\{\{Z_{k_{\sigma(1)}},\ldots
,Z_{k_{\sigma(s)}}|f\},Z_{k_{\sigma(s+1)}},\ldots Z_{k_{\sigma(s+t)}%
}\}^{\oplus}\nonumber\\
&  =-\sum_{s+t=r}\sum_{\sigma\in S_{s,t}}\{Z_{k_{\sigma(s+1)}},\ldots
Z_{k_{\sigma(s+t)}}|\{Z_{k_{\sigma(1)}},\ldots,Z_{k_{\sigma(s)}}|f\}\},
\label{20}%
\end{align}
while the highest anchor vanishes on functions and does not contribute. For
the same reason only summands with $s=0,1$ survive in (\ref{20}) and one gets
\[
\{Z_{k_{1}},\ldots,Z_{k_{r}}\}(f)=-\{Z_{k_{1}},\ldots Z_{k_{r}}|\{f\}\}-\sum
_{i=1}^{r}\{Z_{k_{1}},\ldots,\widehat{Z_{k_{i}}},\ldots,Z_{k_{r}}|\{Z_{k_{i}%
}|f\}\},
\]
which is non-zero only when $r=1,2,3$. In view of Lemma \ref{21},%
\begin{align*}
\{Z_{m}\}(f)  &  =-\{Z_{m}|\overline{d}f\}-\overline{d}\{Z_{m}|f\}\\
&  =-\{\lambda^{m},\{f\}^{\mathrm{op}}\}^{\mathrm{op}}-\{\{\lambda
^{m},f\}^{\mathrm{op}}\}^{\mathrm{op}}+\sum_{\substack{i+j=m\\i,j>0}%
}\tbinom{i+j}{i}i_{Z_{i}}i_{Z_{j}}i_{R}\overline{d}f,
\end{align*}%
\begin{align*}
\{Z_{j},Z_{k}\}(f)  &  =-\dfrac{1}{2}\{Z_{j},Z_{k}|\{f\}\}-\{Z_{j}%
|\{Z_{k}|f\}\}+{}\overset{j,k}{\leftrightarrow}\\
&  =-\dfrac{1}{2}i_{Z_{j}}i_{Z_{k}}i_{R}\overline{d}f-\{\lambda^{j}%
,\{\lambda^{k},f\}^{\mathrm{op}}\}^{\mathrm{op}}+\sum_{\substack{r+s=j\\r,s>0}%
}\tbinom{r+s}{r}i_{Z_{r}}i_{Z_{s}}i_{R}\{Z_{k}|f\}+{}\overset{j,k}%
{\leftrightarrow}%
\end{align*}
and%
\[
\{Z_{j},Z_{k},Z_{\ell}\}(f)=-\{Z_{j},Z_{k}|\{Z_{\ell}|f\}\}+{}\overset{j,k,
\ell}{\circlearrowleft}{}=-i_{Z_{j}}i_{Z_{k}}i_{R}\{Z_{\ell}|f\}+{}%
\overset{j,k, \ell}{\circlearrowleft}{}.
\]

I conclude that
\begin{align*}
K_{X}^{m}(\lambda^{m})(f)  &  =\sum_{j+k=m}\tbinom{j+k}{j}\{\{\lambda
^{j}\}^{\mathrm{op}},\lambda^{k},f\}^{\mathrm{op}}+\{\lambda^{m}%
,\{f\}^{\mathrm{op}}\}^{\mathrm{op}}+\{\{\lambda^{m},f\}^{\mathrm{op}%
}\}^{\mathrm{op}}\\
&  \quad\ +\sum_{\substack{j+k=m\\j,k>0}}\tbinom{j+k}{j}\{\lambda
^{j},\{\lambda^{k},f\}^{\mathrm{op}}\}^{\mathrm{op}},
\end{align*}
where the right hand side is the $(m+1)$st Jacobiator of $\mathscr{L}$.
\end{proof}

\part{Appendixes}

\appendix{}

\section{Proofs of Theorems \ref{23} and \ref{TheorMor}: Computational
Details\label{Appendix}}

For completeness, in this appendix, I add the (mostly straightforward)
computational details of the proofs of Theorems \ref{23} and \ref{TheorMor}.
To be clear, I organize these details in lemmas. I use the same notations as
in Sections \ref{SHDGLR} and \ref{MSHLR}.

\begin{lemma}
\label{Alemma1}Let $\omega\in\mathcal{Q}^{\ast}$. For $q_{1},\ldots,q_{k+1}%
\in\mathcal{Q}$,
\[
\ (d_{k}\omega)(q_{1},\ldots,q_{k+1}):=\sum_{i=1}^{k}(-)^{\chi}\{q_{1}%
,\ldots,\widehat{q_{i}},\ldots,q_{k+1}|\omega(q_{i})\}+(-)^{\omega}%
\omega(\{q_{1},\ldots,q_{k+1}\}),
\]
where $\chi:=\bar{\omega}(\bar{q}_{1}+\cdots+\widehat{\bar{q}_{i}}+\cdots
\bar{q}_{k+1})+\bar{q}_{i}(\bar{q}_{i+1}+\cdots+\bar{q}_{k+1})$, and a hat $\widehat{\cdot}$ denotes omission. Then $d_{k}\omega\in\mathrm{Sym}_{A}%
^{k+1}(\mathcal{Q},A)$.
\end{lemma}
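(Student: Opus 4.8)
The plan is to verify directly the two defining properties of an element of $\mathrm{Sym}_A^{k+1}(\mathcal{Q},A)$: that the prescription $(d_k\omega)(q_1,\ldots,q_{k+1})$ is graded symmetric, and that it is $A$-multilinear (equivalently, given symmetry, $A$-linear in the first slot). That the expression is homogeneous of degree $\bar\omega+1$ is immediate, since each anchor $\{\,\cdot\,,\ldots,\cdot\,|\,\cdot\,\}$ and each bracket $\{\,\cdot\,,\ldots,\cdot\,\}$ is homogeneous of degree $1$ and $\omega$ is $A$-linear of degree $\bar\omega$; one only has to check that the sign exponent $\chi$ has the parity required for the two families of summands to be comparable.

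For graded symmetry it suffices to check invariance under a transposition $(q_j,q_{j+1})\mapsto(q_{j+1},q_j)$ of adjacent arguments, i.e., that such a swap multiplies $(d_k\omega)$ by the Koszul sign $(-)^{q_jq_{j+1}}$. The last summand $(-)^{\omega}\omega(\{q_1,\ldots,q_{k+1}\})$ transforms correctly because $\{\,\cdot\,,\ldots,\cdot\,\}_{k+1}$ is graded symmetric and $\omega$ is $K$-linear. In the sum $\sum_i(-)^{\chi}\{\ldots,\widehat{q_i},\ldots\,|\,\omega(q_i)\}$ one distinguishes two cases. If $i\notin\{j,j+1\}$, then $q_j$ and $q_{j+1}$ both occupy $\mathcal{Q}$-slots of the anchor, so graded symmetry of the anchor in its first $k-1$ entries produces the Koszul sign, and a short computation shows that the change in the exponent $\chi$ exactly compensates. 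If $i\in\{j,j+1\}$, the summands of index $j$ and $j+1$ are exchanged by the transposition, and the point is that the exponents $\chi$ are arranged so that this exchange, combined with the Koszul sign for moving $q_j$ past $q_{j+1}$, reproduces $(-)^{q_jq_{j+1}}$ times the original sum. This is routine but careful sign bookkeeping.

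For $A$-linearity I would compute $(d_k\omega)(aq_1,q_2,\ldots,q_{k+1})$ for $a\in A$ and show it equals $(-)^{a\omega}a\,(d_k\omega)(q_1,\ldots,q_{k+1})$. In the summand $i=1$ one uses $A$-linearity of $\omega$, namely $\omega(aq_1)=(-)^{a\omega}a\,\omega(q_1)$, and then that $\{q_2,\ldots,q_{k+1}\,|\,\cdot\,\}$ is a graded derivation of $A$; this splits that summand into a term proportional to $a$, which belongs to the desired right-hand side, plus a spurious term proportional to $\{q_2,\ldots,q_{k+1}\,|\,a\}\,\omega(q_1)$. In the summands with $i\geq 2$ the element $aq_1$ sits in a $\mathcal{Q}$-slot of the anchor, where the anchor is $A$-linear, so $a$ factors out with its Koszul sign. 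In the final summand one uses graded symmetry of $\{\,\cdot\,,\ldots,\cdot\,\}_{k+1}$ to bring $aq_1$ into the last slot, applies the Lie--Rinehart identity (\ref{LRP}), moves $a$ back, and then applies the $A$-linear map $\omega$; this produces $(-)^{a\omega}a\,\omega(\{q_1,\ldots,q_{k+1}\})$ plus a second spurious term proportional to $\{q_2,\ldots,q_{k+1}\,|\,a\}\,\omega(q_1)$. The heart of the proof, and the step I expect to be the main obstacle, is that these two spurious terms cancel exactly; matching their signs is precisely what the form of the exponent $\chi$ is designed to guarantee. Collecting the surviving terms yields the $A$-linearity identity, and together with the symmetry and degree statements this shows $d_k\omega\in\mathrm{Sym}_A^{k+1}(\mathcal{Q},A)$.
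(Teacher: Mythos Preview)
Your proposal is correct and follows the same logical skeleton as the paper: graded symmetry is elementary, and $A$-linearity comes down to the cancellation of the two ``spurious'' terms, one from the derivation property of the anchor acting on $a\,\omega(q_1)$ and one from the Lie--Rinehart identity (\ref{LRP}) applied to $\{aq_1,q_2,\ldots,q_{k+1}\}$.

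The one methodological difference worth noting is that the paper does not carry out the $A$-linearity check with generic arguments. Instead, after observing that $d_k\omega$ is graded symmetric and $K$-multilinear, it invokes the polarization trick used throughout the paper: to verify an identity between graded symmetric multilinear maps it suffices to evaluate both sides on equal even arguments. So the paper sets $q_2=\cdots=q_{k+1}=q$ with $q$ even and computes $(d_k\omega)(aq_1,q^k)$ directly; the sum over $i\geq 2$ collapses to a single term with a binomial coefficient, and the sign bookkeeping becomes trivial. Your direct approach is perfectly valid and arguably more transparent for a reader unfamiliar with the polarization device, but the paper's version is shorter and avoids the case analysis on $i$. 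Since the paper uses this trick repeatedly in Appendix~\ref{Appendix}, you may want to adopt it here as well for consistency.
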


\begin{proof}
Clearly, $d_{k}\omega$ is $K$-multilinear and graded symmetric. Now, let
$q_{2}=\cdots=q_{k+1}=q$ be even. Compute
\begin{align*}
(d_{k}\omega)(aq_{1},q^{k})  &  =(-)^{(a+q_{1})\omega}k\{aq_{1},q^{k-1}%
|\omega(q)\}+\{q^{k}|\omega(aq_{1})\}-(-)^{\omega}\omega(\{aq_{1},q^{k}\})\\
&  =(-)^{a(\omega+1)+q_{1}\omega}ka\{q_{1},q^{k-1}|\omega(q)\}+(-)^{a\omega
}\{q^{k}|a\omega(q_{1})\}\\
&  \quad\ -(-)^{a(\omega+1)+\omega}a\omega(\{q_{1},q^{k}\})-(-)^{a\omega
}\{q^{k}|a\}\omega(q_{1}\}\\
&  =(-)^{a(\omega+1)+q_{1}\omega}ka\{q_{1},q^{k-1}|\omega(q)\}+(-)^{a(\omega
+1)}a\{q^{k}|\omega(q_{1})\}\\
&  \quad\ -(-)^{a(\omega+1)+\omega}a\omega(\{q_{1},q^{k}\})\\
&  =(-)^{a(\omega+1)}a(d_{k}\omega)(q_{1},q^{k}).
\end{align*}

\end{proof}

\begin{lemma}
\label{Alemma2}Let $a\in A$ and $\omega\in\mathcal{Q}^{\ast}$. Then
\[
d_{k}(a\omega)=(d_{k}a)\omega+(-)^{a}a(d_{k}\omega).
\]

\end{lemma}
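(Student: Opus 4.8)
The plan is to check the identity directly, as an equality in $\mathrm{Sym}_A^{k+1}(\mathcal{Q},A)$. First note that all three terms do live there: $d_k(a\omega)\in\mathrm{Sym}_A^{k+1}(\mathcal{Q},A)$ by Lemma \ref{Alemma1}, applied to the element $a\omega$ of the $A$-module $\mathcal{Q}^{\ast}=\mathrm{Hom}_A(\mathcal{Q},A)$, while $(d_k a)\omega$ and $a\,(d_k\omega)$ lie there as products, in the algebra $\mathrm{Sym}_A(\mathcal{Q},A)$, of an element of $\mathrm{Sym}_A^{k}$ with one of $\mathrm{Sym}_A^{1}$, and of one of $\mathrm{Sym}_A^{0}=A$ with one of $\mathrm{Sym}_A^{k+1}$, respectively. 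Since all three are $K$-multilinear and graded symmetric, it suffices --- by the standard trick of evaluating graded symmetric multilinear maps on equal, even arguments that is used throughout the paper --- to check the identity on arguments $q_{1}=\cdots=q_{k+1}=q$ with $q$ even. This annihilates every Koszul sign $\alpha(\sigma,{}\cdot{})$, every internal sign $\chi$ in the formula of Lemma \ref{Alemma1}, and all signs in the product formula (\ref{Prod}), leaving a very short computation.

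Evaluating the left-hand side by Lemma \ref{Alemma1}, and using that the $A$-module structure of $\mathcal{Q}^{\ast}$ gives $(a\omega)(q')=a\,\omega(q')$, one gets
\[
(d_k(a\omega))(q^{k+1})=(k+1)\,\{q^{k}\,|\,a\,\omega(q)\}+(-)^{a+\omega}a\,\omega(\{q^{k+1}\}).
\]
The anchor $\{q^{k}\,|\,{}\cdot{}\}$ is a graded derivation of $A$, of degree $1$ because $q$ is even (Definition \ref{defLRalg}), so
\[
\{q^{k}\,|\,a\,\omega(q)\}=\{q^{k}\,|\,a\}\,\omega(q)+(-)^{a}\,a\,\{q^{k}\,|\,\omega(q)\},
\]
whence
\[
(d_k(a\omega))(q^{k+1})=(k+1)\,\{q^{k}\,|\,a\}\,\omega(q)+(-)^{a}(k+1)\,a\,\{q^{k}\,|\,\omega(q)\}+(-)^{a+\omega}a\,\omega(\{q^{k+1}\}).
\]
On the right-hand side, the product formula (\ref{Prod}) on equal even arguments turns $((d_k a)\omega)(q^{k+1})$ into a sum of $k+1$ identical summands, each equal to $(d_ka)(q^{k})\,\omega(q)=\{q^{k}\,|\,a\}\,\omega(q)$, while $(a(d_k\omega))(q^{k+1})=a\,(d_k\omega)(q^{k+1})=a\big[(k+1)\{q^{k}\,|\,\omega(q)\}+(-)^{\omega}\omega(\{q^{k+1}\})\big]$ again by Lemma \ref{Alemma1}. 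Adding $(d_ka)\omega$ and $(-)^{a}a(d_k\omega)$ reproduces exactly the three terms displayed above, so the two sides agree and the lemma follows.

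The only delicate point is the sign bookkeeping. Were one to verify the identity on arbitrary, non-diagonal arguments, one would have to reconcile the Koszul sign in (\ref{Prod}) recording which $q_i$ is fed to $\omega$, the permutation sign $\chi$ of Lemma \ref{Alemma1}, and the sign $(-)^{a(\bar q_{1}+\cdots+\widehat{\bar q_{i}}+\cdots+1)}$ produced by the derivation property of the anchor, so that everything collapses to the single global factor $(-)^{a}$; evaluating on equal even arguments bypasses all of this, and no genuine obstacle remains.
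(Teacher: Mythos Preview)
Your proof is correct and follows essentially the same approach as the paper: evaluate both sides on equal even arguments $q^{k+1}$, use the derivation property of the anchor to split $\{q^{k}\,|\,a\,\omega(q)\}$, and identify the resulting three terms with $((d_k a)\omega+(-)^{a}a(d_k\omega))(q^{k+1})$. The paper's proof is more terse but otherwise identical in strategy and in every step.
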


\begin{proof}
Let $q\in\mathcal{Q}$ be even. Then%
\begin{align*}
d_{k}(a\omega)(q^{k+1})  &  =(k+1)\{q^{k}|\,(a\omega)(q)\}-(-)^{a+\omega
}(a\omega)(\{q^{k+1}\})\\
&  =(-)^{a}(k+1)a\{q^{k}|\,\omega(q)\}+(k+1)\{q^{k}|\,a\}\omega
(q)-(-)^{a+\omega}a\omega(\{q^{k+1}\})\\
&  =(-)^{a}a(d_{k}\omega)(q^{k+1})+(k+1)(d_{k}a)(q^{k})\omega(q)\\
&  =((d_{k}a)\omega+(-)^{a}a(d_{k}\omega))(q^{k+1}).
\end{align*}

\end{proof}

\begin{lemma}
\label{Alemma3}The derivation $d_{k}:\mathrm{Sym}_{A}(\mathcal{Q}%
,A)\longrightarrow\mathrm{Sym}_{A}(\mathcal{Q},A)$ is given by the
\emph{Chevalley-Eilenberg formula}:
\begin{align*}
&  (d_{k}\omega)(q_{1},\ldots,q_{r+k-1})\\
&  :=\sum_{\sigma\in S_{k,r}}(-)^{\omega(q_{\sigma(1)}+\cdots+q_{\sigma(k)}%
)}\alpha(\sigma,\boldsymbol{q})\{q_{\sigma(1)},\ldots,q_{\sigma(k)}%
\,|\,\omega(q_{\sigma(k)},\ldots,q_{\sigma(k+r)})\}\\
&  \quad\ -\sum_{\tau\in S_{k+1,r-1}}(-)^{\omega}\alpha(\tau,\boldsymbol{q}%
)\omega(\{q_{\tau(1)},\ldots,q_{\tau(k+1)}\},q_{\tau(k+1)},\ldots
,q_{\tau(k+r)}),
\end{align*}
$\omega\in\mathrm{Sym}_{A}^{r}(\mathcal{Q},A)$, $q_{1},\ldots,q_{r+k}%
\in\mathcal{Q}$.
\end{lemma}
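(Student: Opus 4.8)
The plan is to establish the Chevalley--Eilenberg formula for $d_k$ by showing that the right-hand side, call it $(\tilde d_k\omega)(q_1,\ldots,q_{r+k})$, defines a graded derivation of $\mathrm{Sym}_A(\mathcal{Q},A)$ (of the correct bidegree $(k,-k+1)$) that agrees with $d_k$ on the generators $A$ and $\mathcal{Q}^{\ast}$. Since $\mathrm{Sym}_A(\mathcal{Q},A)\simeq S_A^\bullet\mathcal{Q}^\ast$ is generated as an $A$-algebra by $A$ and $\mathcal{Q}^\ast$, and a derivation is determined by its values on generators, this suffices. First I would check the two base cases: for $\omega=a\in A$ (so $r=0$), the second sum is empty and the first collapses (the only unshuffle in $S_{k,0}$ is trivial) to $(-)^{a(q_1+\cdots+q_k)}\{q_1,\ldots,q_k\,|\,a\}$, which is exactly the defining formula for $d_k a$; for $\omega\in\mathcal{Q}^\ast$ (so $r=1$), the first sum runs over $S_{k,1}$, whose unshuffles single out one argument $q_i$ to be placed last, reproducing the $\sum_i(-)^{\chi}\{q_1,\ldots,\widehat{q_i},\ldots\,|\,\omega(q_i)\}$ term with the sign $\chi$ as in Lemma \ref{Alemma1}, while the second sum runs over $S_{k+1,0}$ and gives $-(-)^{\omega}\omega(\{q_1,\ldots,q_{k+1}\})$ — again matching the definition of $d_k\omega$.

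The substantive step is verifying that $\tilde d_k$ is a graded derivation, i.e.\ that
\[
\tilde d_k(\omega\omega')=(\tilde d_k\omega)\omega'+(-)^{\omega}\omega(\tilde d_k\omega')
\]
for homogeneous $\omega\in\mathrm{Sym}_A^r(\mathcal{Q},A)$, $\omega'\in\mathrm{Sym}_A^{r'}(\mathcal{Q},A)$. Using the standard trick of evaluating on equal, even arguments $q_1=\cdots=q_{r+r'+k}=q$, the product formula \eqref{Prod} becomes a single binomial coefficient $\binom{r+r'}{r}$ times $\omega(q^r)\omega'(q^{r'})$, and the unshuffle sums in $\tilde d_k(\omega\omega')$ decompose according to how the $k$ bracket-arguments and the $r+r'$ remaining arguments distribute between the $\omega$-slot and the $\omega'$-slot. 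The derivation property of each anchor $\{q_1,\ldots,q_k\,|\,\cdot\}$ in its last entry (Definition \ref{defLRalg}) handles the first sum, and the multilinearity of $\omega,\omega'$ handles the second; collecting terms and matching binomial coefficients yields the claim. Once the derivation property is in hand, $\tilde d_k$ and $d_k$ are two derivations of the same bidegree agreeing on generators, hence equal.

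The main obstacle I anticipate is bookkeeping of the Koszul signs: although evaluating on even arguments kills most of them, one must still track the $(-)^{\omega(q_{\sigma(1)}+\cdots+q_{\sigma(k)})}$ factors and the sign $(-)^{\omega}$ attached to the $\omega(\{\cdots\})$ terms when splitting the product, and confirm that the binomial identities $\binom{r+r'}{j}=\sum_{i}\binom{r}{i}\binom{r'}{j-i}$ reassemble exactly. This is routine but error-prone, which is presumably why the author relegates it to the appendix; I would carry it out by first writing everything for even arguments, reducing all sums to binomial coefficients, and only at the very end restoring the general multilinear, graded-symmetric statement by polarization.
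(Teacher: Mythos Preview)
Your plan is correct and close in spirit to the paper's proof, but the paper runs the argument in the opposite direction: since $d_k$ is \emph{by construction} the derivation extending the given formulas on $A$ and $\mathcal{Q}^\ast$, the author simply writes a generic $\omega\in\mathrm{Sym}_A^r(\mathcal{Q},A)$ as a product $\omega=\omega_1\cdots\omega_r$ with $\omega_i\in\mathcal{Q}^\ast$, applies the derivation property $(d_k\omega)(q^{r+k})=\sum_i(-)^{\chi}(\omega_1\cdots\widehat{\omega_i}\cdots\omega_r\cdot d_k\omega_i)(q^{r+k})$, and then expands using the product formula and the known $d_k\omega_i$ to read off the Chevalley--Eilenberg expression directly. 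You propose instead to define $\tilde d_k$ by the CE formula, prove it is a derivation, and match on generators. This is the same bookkeeping in reverse and is perfectly valid; the paper's direction is marginally more economical because it never needs to check separately that the CE expression is $A$-multilinear in the $q$'s or that it satisfies the Leibniz rule---both are inherited for free from $d_k$.
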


\begin{proof}
Let $\omega\in\mathrm{Sym}_{A}^{r}(\mathcal{Q},A)$ be of the form
$\omega=\omega_{1}\cdots\omega_{r}$, $\omega_{1}\in\mathcal{Q}^{\ast}$, and
let $q\in\mathcal{Q}$ be even. Then
\begin{align*}
(d_{k}\omega)(q^{r+k})  &  =\sum_{i=1}^{r}(-)^{\chi}(\omega_{1}\cdots
\widehat{\omega_{i}}\cdots\omega_{r}\cdot d_{k}\omega_{i})(q^{r+k})\\
&  =\tfrac{(k+r)!}{(k+1)!}\sum_{i=1}^{r}(-)^{\chi}\omega_{1}(q)\cdots
\widehat{\omega_{i}(q)}\cdots\omega_{r}(q)(d_{k}\omega_{i})(q^{k+1})\\
&  =\tfrac{(k+r)!}{(k+1)!}\sum_{i=1}^{r}(-)^{\chi}\omega_{1}(q)\cdots
\widehat{\omega_{i}(q)}\cdots\omega_{r}(q)((k+1)\{q^{k}|\omega_{i}%
(q)\}+(-)^{\omega_{i}}\omega_{i}(\{q^{k+1}\}))\\
&  =\tfrac{(k+r)!}{k!}\{q^{k}|\omega_{1}(q)\cdots\omega_{r}(q)\}-(-)^{\omega
}\tbinom{k+r}{k+1}\omega(\{q^{k+1}\},q^{r-1})\\
&  =\tbinom{k+r}{k}\{q^{k}|\omega(q^{r})\}-(-)^{\omega}\tbinom{k+r}{k+1}%
\omega(\{q^{k+1}\},q^{r-1}),
\end{align*}
where $\chi=\bar{\omega}_{1}+\cdots+\bar{\omega}_{i-1}+(\bar{\omega}%
_{i}+1)(\bar{\omega}_{i+1}+\cdots+\bar{\omega}_{r})$.
\end{proof}

\begin{lemma}
\label{Alemma4}Let $a\in A$, $\omega\in\mathcal{Q}^{\ast}$, and $q_{1}%
,\ldots,q_{k+1}\in\mathcal{Q}$. Then%
\[
(E_{k}a)(q_{1},\ldots,q_{k})=(-)^{a(q_{1}+\cdots+q_{k-1})}J^{k}(q_{1}%
,\ldots,q_{k-1}\,|\,a)=0,
\]
and
\[
(E_{k}\omega)(q_{1},\ldots,q_{k+1})=\sum_{i=1}^{k+1}(-)^{\chi}J^{k+1}%
(q_{1},\ldots,\widehat{q_{i}},\ldots,q_{k+1}\,|\,\omega(q_{i}))-\omega
(J^{k+1}(q_{1},\ldots,q_{k+1})),
\]
where
\[
\chi:=\bar{\omega}\sum_{j\neq i}\bar{q}_{j}+\bar{q}_{i}\sum_{j>i}\bar{q}_{j}.
\]

\end{lemma}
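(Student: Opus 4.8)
The plan is to compute the degree $2$ derivation $E_{k}=\sum_{i+j=k}[d_{i},d_{j}]$ of $\mathrm{Sym}_{A}(\mathcal{Q},A)$ directly on the two families of algebra generators $A=\mathrm{Sym}_{A}^{0}(\mathcal{Q},A)$ and $\mathcal{Q}^{\ast}=\mathrm{Sym}_{A}^{1}(\mathcal{Q},A)$; since $\mathrm{Sym}_{A}(\mathcal{Q},A)$ is generated by these and $E_{k}$ is a derivation, this is all that is needed. As each $d_{\ell}$ is homogeneous of degree $1$, one has $[d_{i},d_{j}]=d_{i}d_{j}+d_{j}d_{i}$, so for $a\in A$ and $\omega\in\mathcal{Q}^{\ast}$ I would expand $E_{k}a$ and $E_{k}\omega$ as signed sums over $i+j=k$ of the compositions $d_{i}d_{j}$ applied to $a$, resp. $\omega$. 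The inner application uses the explicit formulas of the proof of Theorem \ref{23} for $d_{m}$ on $A$ and for $d_{m}$ on $\mathcal{Q}^{\ast}$, which produce elements of $\mathrm{Sym}_{A}^{m}(\mathcal{Q},A)$ resp. $\mathrm{Sym}_{A}^{m+1}(\mathcal{Q},A)$; the outer application of $d_{i}$ to these higher-degree elements is then carried out by means of the higher Chevalley--Eilenberg formula of Lemma \ref{Alemma3} (whose validity, in turn, rests on Lemmas \ref{Alemma1} and \ref{Alemma2}).

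After this step, $(E_{k}a)$ and $(E_{k}\omega)$ are displayed as sums, over $i+j=k$, of unshuffle sums of iterated anchors and brackets. The decisive simplification is the device already used in the proof of Theorem \ref{23}: $E_{k}a$ and $E_{k}\omega$ are $K$-multilinear and graded symmetric in their $\mathcal{Q}$-arguments, so it suffices to evaluate on equal even arguments $q_{1}=\cdots=q_{n}=q$. Then all Koszul signs $\alpha(\sigma,\boldsymbol{q})$ equal $1$ and each sum over $(i,j)$-unshuffles collapses to $\binom{i+j}{i}$. Consequently $(E_{k}a)(q^{k})$ becomes an explicit binomial combination of $\{q^{\ell}\,|\,\{q^{m}|a\}\}$ and $\{\{q^{\ell+1}\},q^{m-1}\,|\,a\}$ with $\ell+m=k$, while $(E_{k}\omega)(q^{k+1})$ becomes a corresponding combination of the analogous iterated operations applied to $\omega(q)$ together with the term $\omega(\{\{q^{m+1}\},q^{\ell-1}\})$ built from the bracket $\{\cdots\}$ of $\mathcal{Q}$.

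The last step is to match these combinations with the Jacobiators. Expanding the module Jacobiator $J^{k+1}(q^{k}\,|\,a)$ (resp. the algebra Jacobiator $J^{k+1}(q^{k+1})$) on equal even arguments via the $\oplus$-operations on $L\oplus M$, and using that $\{\cdots\}^{\oplus}$ vanishes once more than one entry comes from $M$, reproduces exactly the same binomial combinations; comparing the two sides, and checking that the overall sign is precisely the Koszul sign $\chi$ displayed in the statement (which is just what is produced by permuting $\omega$ and $q_{i}$ past the remaining arguments), yields the two identities. For $E_{k}a$, the $L_{\infty}[1]$-module axiom $J^{k+1}({}\cdot{},\ldots,{}\cdot{}\,|\,{}\cdot{})=0$ then gives $E_{k}a=0$; for $E_{k}\omega$ one keeps the identity as stated, its right-hand side being a combination of the (separately vanishing) module and $L_{\infty}[1]$-algebra Jacobiators.

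I expect the only genuine difficulty to be bookkeeping: carrying the Koszul signs correctly through the compositions $d_{i}\circ d_{j}$ and through the passage to equal even arguments, and verifying term by term that the binomial coefficients produced by the unshuffle sums on the ``$E$'' side agree with those produced on the ``$J$'' side --- in particular treating correctly the diagonal term $i=j$ in $\sum_{i+j=k}[d_{i},d_{j}]$, which is counted once whereas each off-diagonal pair is counted twice. No conceptual input beyond Lemmas \ref{Alemma1}--\ref{Alemma3} and the definitions of the Jacobiators is needed; this lemma is really just the computational backbone of the corresponding step in the proof of Theorem \ref{23}.
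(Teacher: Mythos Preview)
Your proposal is correct and follows essentially the same route as the paper: evaluate $E_{k}$ on generators, replace the outer $d_{i}$ by the higher Chevalley--Eilenberg formula of Lemma \ref{Alemma3}, specialize to equal even arguments so that unshuffle sums become binomials, and identify the result with the Jacobiator expansions. The paper's proof is exactly this computation written out (note, incidentally, that the statement as printed has a typo --- it should read $J^{k+1}(q_{1},\ldots,q_{k}\,|\,a)$, consistent with the computation and with the version quoted inside the proof of Theorem \ref{23}).
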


\begin{proof}
Let $q\in\mathcal{Q}$ be even. Then
\begin{align*}
(E_{k}a)(q^{k})  &  =\sum_{\ell+m=k}(d_{\ell}d_{m}a)(q^{\ell+m})\\
&  =\sum_{\ell+m=k}\tbinom{\ell+m}{\ell}\{q^{\ell}|(d_{m}a)(q^{m}%
)\}+\sum_{\ell+m=k}\tbinom{\ell+m}{\ell+1}(-)^{a}(d_{m}a)(\{q^{\ell
+1}\},q^{m-1})\\
&  =\sum_{\ell+m=k}\tbinom{\ell+m}{\ell}\{q^{\ell}|\{q^{m}|a\}\}+\sum
_{\ell+m=k}\tbinom{\ell+m}{\ell+1}(-)^{a}\{\{q^{\ell+1}\},q^{m-1}|a\}\\
&  =J^{k+1}(q^{k}|a).
\end{align*}
Similarly,
\begin{align*}
(E_{k}\omega)(q^{k+1})  &  =\sum_{\ell+m=k}(d_{\ell}d_{m}\omega)(q^{\ell
+m+1})\\
&  =\sum_{\ell+m=k}\tbinom{\ell+m+1}{\ell}\{q^{\ell}|d_{m}\omega
(q^{m+1})\}+\sum_{\ell+m=k}\tbinom{\ell+m}{\ell+1}(-)^{\omega}(d_{m}%
\omega)(\{q^{\ell+1}\},q^{m})\\
&  =\sum_{\ell+m=k}\left(  \tbinom{\ell+m+1}{\ell}(m+1)\{q^{\ell}%
|\{q^{m}|\omega(q)\}\}-(-)^{\omega}\tbinom{\ell+m+1}{\ell}\{q^{\ell}%
|\omega(\{q^{m+1}\})\}\right. \\
&  \quad\ +(-)^{\omega}\tbinom{\ell+m}{\ell+1}\{q^{m}|\omega(\{q^{\ell
+1}\})\}+\tbinom{\ell+m}{\ell+1}m\{\{q^{\ell+1}\},q^{m-1}|\omega(q)\}\\
&  \quad\ \left.  -\tbinom{\ell+m}{\ell+1}\omega(\{\{q^{\ell+1}\},q^{m}%
\})\right) \\
&  =-\omega(J^{k+1}(q^{k+1}))+(k+1)J^{k+1}(q^{k}|\omega(q)).
\end{align*}

\end{proof}

\begin{lemma}
\label{Alemma5}Let $\omega\in\mathcal{Q}^{\ast}$ with $p_{1},\ldots,p_{\ell
}\in\mathcal{P}$. Then the expression $R_{\ell}(\omega)$, inductively defined
by
\begin{align*}
R_{\ell}(\omega)(p_{1},\ldots,p_{\ell})  &  :=\Psi_{\ell}(\omega)(p_{1}%
,\ldots,p_{\ell})\\
&  \quad\ -\sum_{\substack{i+j=\ell\\j>0}}\sum_{\sigma\in S_{i,j}}%
\alpha(\sigma,\boldsymbol{p})\psi_{j}(R_{i}(\omega)(p_{\sigma(1)}%
,\ldots,p_{\sigma(i)})))(p_{\sigma(i+1)},\ldots,p_{\sigma(i+j)}),
\end{align*}
is $A$-linear in its argument $\omega$.
\end{lemma}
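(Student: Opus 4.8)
The plan is to argue by induction on $\ell$, using only that $\psi\colon\mathrm{Sym}_A(\mathcal{Q},A)\to\mathrm{Sym}_A(\mathcal{P},A)$ is a morphism of graded unital algebras with $p\circ\psi=p$ (so $\psi_0=\mathrm{id}_A$ and $\Psi_0=0$), together with the standing hypothesis that $\mathcal{Q}$ is projective and finitely generated, so that an $A$-linear map $\mathcal{Q}^\ast\to A$ is the same datum as an element of $\mathcal{Q}$; the latter is precisely what makes the defining prescription $R_\ell(\omega)(p_1,\dots,p_\ell)=\omega(\Phi_\ell(p_1,\dots,p_\ell))$ of (\ref{9}) legitimate once $A$-linearity of $R_\ell(-)$ is known. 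For $\ell=1$ the corrective sum is empty, so $R_1(\omega)=\Psi_1(\omega)$; extracting the degree-$1$ component of $\psi(a\omega)=\psi(a)\psi(\omega)$ and using $\Psi_0=0$, $\psi_0=\mathrm{id}_A$ gives $\Psi_1(a\omega)=\psi_0(a)\,\Psi_1(\omega)=a\,\Psi_1(\omega)$, so $R_1$ is $A$-linear and $\Phi_1$ is well defined.

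For the inductive step, assume $R_1,\dots,R_{\ell-1}$ are $A$-linear, so $\Phi_1,\dots,\Phi_{\ell-1}$ are defined. I would compute $R_\ell(a\omega)$ by feeding two expansions into its definition. First, comparing degree-$\ell$ components of $\psi(a\omega)=\psi(a)\psi(\omega)$ yields
\[
\Psi_\ell(a\omega)=a\,\Psi_\ell(\omega)+\sum_{\substack{i+j=\ell\\ i,j\geq 1}}\psi_i(a)\,\Psi_j(\omega),
\]
the products on the right being those of $\mathrm{Sym}_A(\mathcal{P},A)$ given by (\ref{Prod}). Second, in the corrective sum the induction hypothesis gives $(a\omega)(\Phi_i(\cdots))=a\cdot\omega(\Phi_i(\cdots))=a\cdot R_i(\omega)(\cdots)$, and applying $\psi_j$ to this product in $A$ and using once more that $\psi$ is an algebra morphism gives $\psi_j\!\big(a\cdot R_i(\omega)(\cdots)\big)=\sum_{p+q=j}\psi_p(a)\,\psi_q\!\big(R_i(\omega)(\cdots)\big)$, with $\psi_0=\mathrm{id}$.

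Substituting both expansions into the defining formula for $R_\ell(a\omega)$, I would first collect all summands in which no positive-degree factor $\psi_p(a)$ with $p\geq 1$ occurs: using $R_i(\omega)=\omega(\Phi_i(\cdots))$ these re-fold into $a\big(\Psi_\ell(\omega)-(\text{corrective sum of level }\ell)\big)=a\,R_\ell(\omega)(p_1,\dots,p_\ell)$. It then remains to see that the remaining terms — the summands $\psi_i(a)\,\Psi_j(\omega)$ with $i\geq 1$ coming from the first expansion, and the summands of the corrective part that produced some $\psi_p(a)$ with $p\geq 1$ — cancel in pairs. To make this visible I would expand each $\Psi_j(\omega)$ with $j<\ell$ by the already-established level-$j$ instance of (\ref{9}), i.e.\ $\Psi_j(\omega)=\omega(\Phi_j(\cdots))+(\text{corrective sum of level }j)$, so that both leftover families become sums, over composite unshuffles, of evaluations of products $\psi_p(a)\cdot\psi_q\!\big(\omega(\Phi_m(\cdots))\big)$ (the case $q=0$ included). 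Matching them requires the standard bijection between a pair ``$S_{i,j}$-unshuffle followed by an $S_{p,q}$-unshuffle'' and an unshuffle of $\{1,\dots,\ell\}$ refining the same partition, together with the multiplicativity of the Koszul sign $\alpha$ under this composition and the graded-symmetry sign in (\ref{Prod}) — exactly the kind of identities already used throughout Sections \ref{SHDGLR}--\ref{MSHLR}. As usual it suffices to test the identity on equal even arguments $p_1=\cdots=p_\ell$, which kills the $\alpha$'s and reduces the cancellation to matching multinomial coefficients (the cardinalities of the relevant unshuffle sets).

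The main obstacle is precisely this last bookkeeping: organizing the leftover terms so that the telescoping is manifest and checking that signs and cardinalities match after the reduction to equal even arguments. The two algebra-morphism expansions and the extraction of the ``$a\cdot(\,\cdot\,)$'' part are routine, and none of this uses compatibility of $\psi$ with the differentials — only that $\psi$ is a morphism of graded unital algebras respecting the projection $p$.
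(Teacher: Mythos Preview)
Your approach is correct and essentially identical to the paper's proof: both proceed by induction on $\ell$, establish the base case from $\Psi_0=0$ and $\psi_0=\mathrm{id}_A$, and in the inductive step expand $\Psi_\ell(a\omega)$ and $\psi_j(aR_i(\omega))$ via the algebra-morphism property, reduce to equal even arguments, extract the $a\,R_\ell(\omega)$ part, and show the remaining terms cancel.

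The only difference is in how the final cancellation is organized. You propose to further expand each $\Psi_j(\omega)$ via the level-$j$ recursion and then match terms. The paper is slightly more direct: after re-indexing, the leftover is seen to be
\[
\sum_{\substack{i+k=\ell\\k>0}}\tbinom{i+k}{k}\psi_k(a)(p^k)\Big[\Psi_i(\omega)(p^i)-R_i(\omega)(p^i)-\sum_{\substack{s+j=i\\j>0}}\tbinom{s+j}{j}\psi_j(R_s(\omega)(p^s))(p^j)\Big],
\]
and the bracket vanishes \emph{by the very definition} of $R_i(\omega)$. This spares you the extra layer of expansion and makes the ``telescoping'' you anticipate immediate; the multinomial bookkeeping you worry about reduces to the single identity $\tbinom{i+j+k}{j+k}\tbinom{j+k}{j}=\tbinom{i+j+k}{k}\tbinom{i+j}{j}$.
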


\begin{proof}
Let $p_{1}=\cdots=p_{\ell}=p$ with $p$ even. I use induction on $\ell$.
$\Psi_{1}$ is $A$-linear, indeed,
\[
\Psi_{1}(a\omega)=\psi_{0}(a)\Psi_{1}(\omega)+\psi_{1}(a)\Psi_{0}%
(\omega)=a\Psi_{1}(\omega).
\]
This provides the base of induction. Now compute%
\begin{align*}
R_{\ell+1}(a\omega)  &  =\Psi_{\ell+1}(a\omega)(p^{\ell+1})-\sum
_{\substack{i+j=\ell+1\\j>0}}\tbinom{i+j}{j}\psi_{j}(R_{\ell}(a\omega
))(p^{\ell+1})\\
&  =a\Psi_{\ell+1}(\omega)(p^{\ell+1})+\sum_{\substack{i+j=\ell+1\\j>0}%
}\tbinom{i+j}{j}\psi_{j}(a)(p^{j})\Psi_{i}(\omega)(p^{i})-\sum
_{\substack{i+j=\ell+1\\j>0}}\tbinom{i+j}{j}\psi_{j}(aR_{i}(\omega))(p^{j})\\
&  =aR_{\ell+1}(\omega)+\sum_{\substack{i+j=\ell+1\\j>0}}\tbinom{i+j}{j}%
\psi_{j}(a)(p^{j})\Psi_{i}(\omega)(p^{i})\\
&  \quad\ -\sum_{\substack{i+j+k=\ell+1\\k>0}}\tbinom{i+j+k}{j+k}\tbinom
{j+k}{j}\psi_{k}(a)(p^{k})\psi_{j}(R_{i}(\omega))(p^{j}).
\end{align*}
The last two summands cancel. Indeed, they are
\begin{align*}
&  \sum_{\substack{i+j=\ell+1\\j>0}}\tbinom{i+j}{j}\psi_{j}(a)(p^{j})\Psi
_{i}(\omega)(p^{i})-\sum_{\substack{i+j+k=\ell+1\\k>0}}\tbinom{i+j+k}%
{j+k}\tbinom{j+k}{j}\psi_{k}(a)(p^{k})\psi_{j}(R_{i}(\omega))(p^{j})\\
&  =\sum_{\substack{i+k=\ell+1\\k>0}}\tbinom{i+k}{k}\psi_{k}(a)(p^{k})\Psi
_{i}(\omega)(p^{i})-\sum_{\substack{i+j+k=\ell+1\\k>0}}\tbinom{i+k}{k}\psi
_{k}(a)(p^{k})R_{i}(\omega)\\
&  \quad\ -\sum_{\substack{i+j+k=\ell+1\\k>0}}\tbinom{i+j+k}{k}\tbinom{i+j}%
{j}\psi_{k}(a)(p^{k})\psi_{j}(R_{i}(\omega))(p^{j})\\
&  =\sum_{\substack{i+k=\ell+1\\k>0}}\tbinom{i+k}{k}\psi_{k}(a)(p^{k}%
)\cdot\lbrack\Psi_{i}(\omega)(p^{i})-R_{i}(\omega)-\sum_{\substack{s+j=i\\j>0}%
}\tbinom{s+j}{j}\psi_{j}(R_{s}(\omega))(p^{j})]\\
&  =0.
\end{align*}

\end{proof}

Now, I present the proof of Lemma \ref{Lemma} (see Section \ref{MSHLR} for the statement).

\begin{proof}
[Proof of Lemma \ref{Lemma}]Let $p_{1}=\cdots=p_{k}=p$ be even. For $r=0$ the
result follows immediately from the definition of $\phi_{k}$. Thus, let $r>0$,
and $\omega=\sigma_{1}\cdots\sigma_{r}$, with $\sigma_{i}\in\mathcal{Q}^{\ast
}$. Then
\begin{align*}
\psi(\omega)_{k}(p^{k})  &  =\psi(\sigma_{1}\cdots\sigma_{r})_{k}(p^{k})\\
&  =\sum_{m_{1}+\cdots+m_{r}=k}(\psi(\sigma_{1})_{m_{1}}\cdots\psi(\sigma
_{r})_{m_{r}})(p^{k})\\
&  =\sum_{m_{1}+\cdots+m_{r}=k}\tbinom{m_{1}+\cdots+m_{r}}{m_{1},\cdots,m_{r}%
}\psi(\sigma_{1})_{m_{1}}(p^{m_{1}})\cdots\psi(\sigma_{r})_{m_{r}}(p^{m_{r}}).
\end{align*}
Now, it follows from (\ref{9}) that, if $\sigma\in\mathcal{Q}^{\ast}$, then
\[
\psi(\sigma)_{m}(p^{m})=\Psi_{m}(\sigma)(p^{m})=\sum_{s+t=m}\tbinom{s+t}%
{s}\psi_{s}(\sigma(\Phi_{t}(p^{t})))(p^{s})=\sum_{s+t=m}\tbinom{s+t}{s}%
\psi(\sigma(\Phi_{t}(p^{t})))_{s}(p^{s}),
\]
so that
\begin{align*}
&  \psi(\omega)_{k}(p^{k})\\
&  =\sum_{s_{1}+t_{1}+\cdots+s_{r}+t_{r}=k}\tbinom{s_{1}+t_{1}+\cdots
+s_{r}+t_{r}}{s_{1},\cdots,s_{r},t_{1},\cdots,t_{r}}\psi(\sigma_{1}%
(\Phi_{t_{1}}(p^{t_{1}})))_{s_{1}}(p^{s_{1}})\cdots\psi(\sigma_{r}(\Phi
_{t_{r}}(p^{t_{r}})))_{s_{r}}(p^{s_{r}})\\
&  =\sum_{s+t_{1}+\cdots+t_{r}=k}\tbinom{s+t_{1}+\cdots+t_{r}}{s,t_{1}%
,\cdots,t_{r}}\psi(\sigma_{1}(\Phi_{t_{1}}(p^{t_{1}}))\cdots\sigma_{r}%
(\Phi_{t_{r}}(p^{t_{r}})))_{s}(p^{s})\\
&  =\sum_{\substack{t_{0}+\cdots+t_{r}=k\\t_{1}\leq\cdots\leq t_{r}}%
}C(t_{0}|t_{1},\ldots,t_{r})\phi_{t_{0}}(p^{t_{0}}|\omega(\Phi_{t_{1}%
}(p^{t_{1}}),\cdots,\Phi_{t_{r}}(p^{t_{r}})),
\end{align*}
where $C(t_{0}|t_{1},\ldots,t_{r})$ is the cardinality of $T_{t_{0}%
|t_{1},\ldots,t_{r}}$.
\end{proof}

\begin{lemma}
\label{Alemma6}Let $\omega\in\mathcal{Q}^{\ast}$, and $p\in\mathcal{P}$ even.
Then $\psi(D_{\mathcal{Q}}\omega)_{k}=(D_{\mathcal{P}}\psi(\omega))_{k}$ iff
\begin{equation}
\sum_{m+\ell=k}\tbinom{m+\ell}{m}\phi_{\ell}(p^{\ell}|\omega(K_{\Phi}%
^{m}(p^{m})))=0, \label{29}%
\end{equation}

\end{lemma}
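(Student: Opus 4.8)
The plan is to prove the stronger pointwise identity
\[
\psi(D_{\mathcal{Q}}\omega)_{k}(p^{k})-(D_{\mathcal{P}}\psi(\omega))_{k}(p^{k})=(-)^{\omega}\sum_{m+\ell=k}\tbinom{m+\ell}{m}\phi_{\ell}(p^{\ell}\,|\,\omega(K_{\Phi}^{m}(p^{m}))),
\]
valid for every even $p\in\mathcal{P}$, every $\omega\in\mathcal{Q}^{\ast}$ and every $k$. Since $\psi(D_{\mathcal{Q}}\omega)_{k}$ and $(D_{\mathcal{P}}\psi(\omega))_{k}$ are $K$-multilinear and graded symmetric, their equality as maps is equivalent to their equality on equal even arguments; and $(-)^{\omega}$ is an invertible scalar not depending on $p$; so the displayed identity gives the asserted equivalence at once. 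Thus it suffices to compute the two sides and compare.

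First I would expand the left-hand side. Writing $\psi(D_{\mathcal{Q}}\omega)_{k}=\sum_{j}\psi(d_{j}\omega)_{k}$ and noting $d_{j}\omega\in\mathrm{Sym}^{j+1}_{A}(\mathcal{Q},A)$, Lemma \ref{Lemma} expresses $\psi(d_{j}\omega)_{k}(p^{k})$ as a sum over partitions $\ell_{0}+\ell_{1}+\cdots+\ell_{j+1}=k$ with $\ell_{1}\leq\cdots\leq\ell_{j+1}$ of $|T_{\ell_{0}|\ell_{1},\ldots,\ell_{j+1}}|\,\phi_{\ell_{0}}(p^{\ell_{0}}\,|\,(d_{j}\omega)(\Phi_{\ell_{1}}(p^{\ell_{1}}),\ldots,\Phi_{\ell_{j+1}}(p^{\ell_{j+1}})))$, all the arguments $\Phi_{\ell_{i}}(p^{\ell_{i}})$ being even. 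Substituting the higher Chevalley-Eilenberg formula for $d_{j}\omega$ (the one displayed in the proof of Theorem \ref{23}, recorded in Lemma \ref{Alemma3}) splits each such term into ``anchor'' pieces $\phi_{\ell_{0}}(p^{\ell_{0}}\,|\,\{\ldots,\widehat{\Phi_{\ell_{i}}(p^{\ell_{i}})},\ldots\,|\,\omega(\Phi_{\ell_{i}}(p^{\ell_{i}}))\})$ and a ``bracket'' piece $(-)^{\omega}\phi_{\ell_{0}}(p^{\ell_{0}}\,|\,\omega(\{\Phi_{\ell_{1}}(p^{\ell_{1}}),\ldots,\Phi_{\ell_{j+1}}(p^{\ell_{j+1}})\}))$. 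Summing the bracket pieces over $j$ and all partitions, and using that $|T_{\ell_{0}|\ell_{1},\ldots,\ell_{r}}|$ is $|S^{<}_{\ell_{1},\ldots,\ell_{r}}|=C(\ell_{1},\ldots,\ell_{r})$ times the multinomial factor governing the placement of the first $\ell_{0}$ slots, one recovers exactly $(-)^{\omega}\sum_{m+\ell=k}\tbinom{m+\ell}{m}\phi_{\ell}(p^{\ell}\,|\,\omega(\sum_{r}C(\ell_{1},\ldots,\ell_{r})\{\Phi_{\ell_{1}}(p^{\ell_{1}}),\ldots,\Phi_{\ell_{r}}(p^{\ell_{r}})\}))$, i.e. the $(-)^{\omega}$-multiple of the first half of $\sum_{m+\ell=k}\tbinom{m+\ell}{m}\phi_{\ell}(p^{\ell}\,|\,\omega(K_{\Phi}^{m}(p^{m})))$.

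Next I would expand the right-hand side. Write $(D_{\mathcal{P}}\psi(\omega))_{k}=\sum_{j}d_{j}^{\mathcal{P}}(\psi(\omega)_{k-j})$ and apply the Chevalley-Eilenberg formula on the $\mathcal{P}$-side (Lemma \ref{Alemma3}), obtaining $\tbinom{k}{j}\{p^{j}\,|\,\psi(\omega)_{k-j}(p^{k-j})\}-(-)^{\omega}\tbinom{k}{j+1}\psi(\omega)_{k-j}(\{p^{j+1}\},p^{k-j-1})$; then rewrite $\psi(\omega)_{n}=\Psi_{n}(\omega)$ by the recursion (\ref{9}), which turns $\Psi_{n}(\omega)(\cdots)$ into $\omega(\Phi_{n}(\cdots))$ plus a sum of $\phi_{j}(\cdots\,|\,\omega(\Phi_{i}(\cdots)))$. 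In the second summand this introduces $\Phi$'s evaluated on the odd argument $\{p^{j+1}\}$, and the resulting terms $\Phi_{r+1}(\{p^{j+1}\},p^{r})$ assemble, after matching the combinatorial factors, into $-(-)^{\omega}\sum_{m+\ell=k}\tbinom{m+\ell}{m}\phi_{\ell}(p^{\ell}\,|\,\omega(\sum_{a+r=m}\tbinom{a+r}{a}\Phi_{r+1}(\{p^{a}\},p^{r})))$, i.e. the $(-)^{\omega}$-multiple of minus the second half of $\sum_{m+\ell=k}\tbinom{m+\ell}{m}\phi_{\ell}(p^{\ell}\,|\,\omega(K_{\Phi}^{m}(p^{m})))$.

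What is then left over on the two sides — the $\mathcal{Q}$-side anchor pieces from the first step and the $\{p^{j}\,|\,\psi(\omega)_{k-j}(p^{k-j})\}$ pieces from the second — must cancel; this is the step I expect to be the main obstacle. The idea is that after expanding $\omega(\Phi_{\ell_{i}}(p^{\ell_{i}}))$ and $\psi(\omega)_{k-j}(p^{k-j})$ by (\ref{9}), both become the same nested sum of $\phi$-anchors applied to lower $\Psi$'s and $\phi$'s, and the cancellation is precisely (\ref{9}) read as an identity of multilinear maps on $\mathcal{P}$ pushed through one further layer of the $A$-linear anchor-derivations $\phi_{\ell}(p^{\ell}\,|\,{}\cdot{})$; it follows by the same re-indexing of nested unshuffle sums used in the proof of Theorem \ref{23}, now run in the reverse direction. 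The delicate points are controlling the several layers of nested unshuffles, checking that the cardinalities $|S^{<}_{\ell_{1},\ldots,\ell_{r}}|$ and $|T_{\ell_{0}|\ell_{1},\ldots,\ell_{r}}|$ combine with the multinomial coefficients as required, and verifying that the Koszul signs carried by the odd argument $\{p^{j+1}\}$ in (\ref{9}) and in Lemma \ref{Lemma} collapse into the single global factor $(-)^{\omega}$. Combining the three contributions yields the displayed identity, and hence the lemma.
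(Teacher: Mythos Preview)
Your expansion of both sides is essentially what the paper does, and the identification of the ``bracket'' pieces with the first half of $K_{\Phi}^{m}$ and of the $\Phi_{r+1}(\{p^{a}\},p^{r})$ pieces with its second half is correct. However, there is a genuine gap in your bookkeeping of the right-hand side, and as a consequence the cancellation you propose in the third paragraph cannot work as stated.

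When you expand $\Psi_{n}(\omega)(\{p^{j+1}\},p^{n-1})$ via (\ref{9}) (equivalently, Lemma~\ref{Lemma} with $r=1$), the odd argument $\{p^{j+1}\}$ may land either inside a $\Phi_{i}$ \emph{or} inside the outer $\phi_{j}$. You only retain the first possibility; the second produces terms of the shape $\phi_{\ell}(\{p^{m}\},p^{\ell}\,|\,\omega(\Phi_{r}(p^{r})))$, which you drop. So your list of ``leftovers'' is incomplete: besides the $\mathcal{Q}$-side anchor pieces and the $\{p^{j}\,|\,\psi(\omega)_{k-j}(p^{k-j})\}$ pieces, these extra $\phi_{\ell}(\{p^{m}\},\ldots)$ terms must also be absorbed. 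Your proposed mechanism for the cancellation---expanding through (\ref{9}) and re-indexing---cannot do this, because (\ref{9}) is a purely structural recursion among $\Psi$, $\Phi$ and $\phi$; it contains no brackets or anchors at all, and so cannot manufacture the identity relating the $\mathcal{Q}$-anchors $\{\Phi_{\ell_{1}},\ldots\,|\,\cdot\}$ to the $\mathcal{P}$-anchors $\{p^{m}\,|\,\cdot\}$ and the $\mathcal{P}$-brackets $\{p^{m}\}$.

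The missing ingredient is identity (\ref{11}), the compatibility condition on $A$ that was already derived from $\psi(D_{\mathcal{Q}}a)=D_{\mathcal{P}}\psi(a)$. This is exactly how the paper proceeds: applying (\ref{11}) with $a=\omega(\Phi_{r}(p^{r}))$ and summing over $r$ shows at once that the $\mathcal{Q}$-side anchor pieces equal the \emph{first two} summands of the expansion (\ref{13}) of $(D_{\mathcal{P}}\psi(\omega))_{k}(p^{k})$ (both the $\{p^{m}\,|\,\phi_{\ell}(\ldots)\}$ terms \emph{and} the $\phi_{\ell}(\{p^{m}\},\ldots)$ terms). Once you invoke (\ref{11}) the rest of your argument goes through unchanged. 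Note also that this means the equivalence in the lemma is really proved \emph{modulo} the condition on $A$; without (\ref{11}) your claimed pointwise identity is false.
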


\begin{proof}
Compute
\begin{align}
&  \psi(D_{\mathcal{Q}}\omega)_{k}(p^{k})\nonumber\\
&  =\sum_{m}\psi(d_{m}\omega)_{k}(p^{k})\nonumber\\
&  =\sum_{m}\sum_{\substack{\ell_{0}+\cdots+\ell_{m+1}=k\\\ell_{1}\leq
\cdots\leq\ell_{m+1}}}C(\ell_{0}|\ell_{1},\ldots,\ell_{m+1})\phi_{\ell_{0}%
}(p^{\ell_{0}}|(d_{m}\omega)(\Phi_{\ell_{1}}(p^{\ell_{1}}),\ldots,\Phi
_{\ell_{m+1}}(p^{\ell_{m+1}})))\nonumber\\
&  =\sum_{m}\sum_{\substack{\ell_{0}+\cdots+\ell_{m+1}=k\\\ell_{1}\leq
\cdots\leq\ell_{m+1}}}C(\ell_{0}|\ell_{1},\ldots,\ell_{m+1})\phi_{\ell_{0}%
}(p^{\ell_{0}}|\{\Phi_{\ell_{1}}(p^{\ell_{1}}),\ldots,\widehat{\Phi_{\ell_{i}%
}(p^{\ell_{i}})},\ldots,\Phi_{\ell_{m+1}}(p^{\ell_{m+1}})|\omega(\Phi
_{\ell_{i}}(p^{\ell_{i}}))\})\nonumber\\
&  \quad\ -\sum_{m}\sum_{\substack{\ell_{0}+\cdots+\ell_{m+1}=k\\\ell_{1}%
\leq\cdots\leq\ell_{m+1}}}(-)^{\omega}C(\ell_{0}|\ell_{1},\ldots,\ell
_{m+1})\phi_{\ell_{0}}(p^{\ell_{0}}|\omega(\{\Phi_{\ell_{1}}(p^{\ell_{1}%
}),\ldots,\Phi_{\ell_{m+1}}(p^{\ell_{m+1}})\})). \label{12}%
\end{align}
Moreover,
\begin{align}
&  (D_{\mathcal{P}}\psi(\omega))_{k}(p^{k})\nonumber\\
&  =\sum_{m+n=k}d_{m}\Psi_{n}(\omega)(p^{k})\nonumber\\
&  =\sum_{m+n=k}\,\tbinom{k}{m}\{p^{m}\,|\Psi_{n}(\omega)(p^{n})\}-\sum
_{m+n=k}(-)^{\omega}\tbinom{k}{m+1}\Psi_{n}(\omega)(\{p^{m+1}\},p^{n-1}%
)\nonumber\\
&  =\sum_{m+\ell+r=k}\,\tbinom{m+\ell+r}{m,\ell,r}[\{p^{m}\,|\phi_{\ell
}(p^{\ell}|\omega(\Phi_{r}(p^{r}))\}\ -\phi_{\ell}(\{p^{m}\},p^{\ell}%
|\omega(\Phi_{r}(p^{r})))\nonumber\\
&  \quad\ -(-)^{\omega}\phi_{\ell}(p^{\ell}|\omega(\Phi_{r}(\{p^{m}%
\},p^{r})))]. \label{13}%
\end{align}
Now, (\ref{11}) implies that the first summand in (\ref{12}) equals the first
two summands in (\ref{13}). It follows that
\begin{align}
&  \sum_{m}\sum_{\substack{\ell_{0}+\cdots+\ell_{m+1}=k\\\ell_{1}\leq
\cdots\leq\ell_{m+1}}}C(\ell_{0}|\ell_{1},\ldots,\ell_{m+1})\phi_{\ell_{0}%
}(p^{\ell_{0}}|\omega(\{\Phi_{\ell_{1}}(p^{\ell_{1}}),\ldots,\Phi_{\ell_{m+1}%
}(p^{\ell_{m+1}})\}))\nonumber\\
&  =\sum_{m+\ell+r=k}\,\tbinom{m+\ell+r}{m,\ell,r}\phi_{\ell}(p^{\ell}%
|\omega(\Phi_{r}(\{p^{m}\},p^{r}))), \label{24}%
\end{align}
that can be compactly rewritten in the form (\ref{29}).
\end{proof}

\section{Higher Lie-Rinehart Brackets as Derived Brackets}

The Lie brackets in a Lie algebroid can be presented as derived brackets
\cite{k04}. Similarly, the higher brackets in an $L_{\infty}$-algebra can be
presented as higher derived brackets following T.{} Voronov \cite{v05} (see
\cite{fz12}). Marco Zambon suggested to me that a similar statement could hold
for the higher brackets in a SH Lie-Rinehart algebra. This is indeed the case
as I briefly show in this appendix. First, I recall the formalism of
\cite{v05}.

\begin{definition}
Let $%
\mathscr{V}%
$ be a DG Lie algebra (over a field $K$ of zero characteristic). Consider the
following data:

\begin{enumerate}
\item an Abelian sub-algebra $%
\mathscr{A}%
\subset%
\mathscr{V}%
$,

\item a $K$-linear projector $\mathbbm{P}:%
\mathscr{V}%
\longrightarrow%
\mathscr{A}%
$,

\item a degree $1$ element $D\in%
\mathscr{V}%
$.
\end{enumerate}

The data $(%
\mathscr{A}%
,\mathbbm{P},D)$ are called $V$\emph{-data in }$%
\mathscr{V}%
$ if

\begin{enumerate}
\item $\ker\mathbbm{P}\subset%
\mathscr{V}%
$ is a Lie subalgebra,

\item $\mathbbm{P}D=0$,

\item $D^{2}=0$.
\end{enumerate}
\end{definition}

A triple $(%
\mathscr{A}%
,\mathbbm{P},D)$ of $V$-data determines, in particular, $K$-multilinear,
graded-symmetric maps
\[
\{{}\cdot{},\cdots,{}\cdot{}\}_{k}^{D}:{}%
\mathscr{A}%
^{\times k}\longrightarrow%
\mathscr{A}%
,\quad k\in\mathbb{N}%
\]
via
\[
\{a_{1},a_{2},\cdots,a_{k}\}^{D}:=\mathbbm{P}[[\cdots\lbrack\lbrack
D,a_{1}],a_{2}]\cdots],a_{k}],\quad a_{1},a_{2},\ldots,a_{k}\in%
\mathscr{A}%
.
\]

\begin{theorem}
[Voronov \cite{v05}]\label{TheorV}The brackets $\{{}\cdot{},\cdots,{}\cdot
{}\}_{k}^{D}$, $k\in\mathbb{N}$, give $%
\mathscr{V}%
$ the structure of an $L_{\infty}[1]$-algebra.
\end{theorem}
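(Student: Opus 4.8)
The plan is to recognise the derived brackets as the Taylor coefficients of the $\mathscr{A}$-component of a gauge transform of $D$, and then to read off the $L_{\infty}[1]$-relations directly from the fact that a gauge transform of the Maurer--Cartan element $D$ is again Maurer--Cartan. Recall the dictionary I intend to use: an $L_{\infty}[1]$-structure on a graded vector space $W$ is the same datum as a square-zero, degree $1$ coderivation of the cofree cocommutative coalgebra on $W$, equivalently a homological (odd, square-zero) formal vector field $\mathbf{Q}$ on the formal pointed graded manifold modelled on $W$; under this correspondence a graded-symmetric family $\{{}\cdot{},\cdots,{}\cdot{}\}_{k}$ of degree $1$ brackets corresponds to the generating series $Q(a):=\sum_{k\geq 1}\tfrac1{k!}\{a,\ldots,a\}_{k}$ ($k$ copies of a formal degree $0$ self-argument $a$), and the relations $J^{k}\equiv 0$ are equivalent to the single master equation $\partial_{Q(a)}Q(a)=0$ — the directional derivative of the vector-valued function $Q$ in the direction of the vector $Q(a)$ itself (its coefficient of $t^{n}$, after rescaling $a\mapsto ta$, being $\tfrac1{n!}J^{n}(a,\ldots,a)$, so that vanishing for all formal $a$ is equivalent, by polarisation, to $J^{n}\equiv 0$). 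Here $W=\mathscr{A}$.

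First I would record the elementary consequences of the $V$-data axioms: $\mathbbm{P}$ being a projector gives the splitting $\mathscr{V}=\mathscr{A}\oplus\ker\mathbbm{P}$; $\mathbbm{P}D=0$ gives $D\in\ker\mathbbm{P}$; and $\mathscr{A}$ abelian gives $[\mathrm{ad}_{a},\mathrm{ad}_{b}]=\mathrm{ad}_{[a,b]}=0$ for $a,b\in\mathscr{A}$, so all these inner derivations mutually commute and $e^{-\mathrm{ad}_{a}}$ is a well-defined Lie-algebra automorphism of $\mathscr{V}$ (to avoid convergence issues, read every identity below as an identity of formal power series in an auxiliary even parameter $t$, with $a$ replaced by $ta$; equivalently check degree-by-degree). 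Set
\[
F(a):=e^{-\mathrm{ad}_{a}}(D)=D+\sum_{k\geq 1}\tfrac{(-1)^{k}}{k!}\,\mathrm{ad}_{a}^{\,k}(D).
\]
Since $e^{-\mathrm{ad}_{a}}$ is an automorphism, $[F(a),F(a)]=e^{-\mathrm{ad}_{a}}[D,D]=0$; and since $(-\mathrm{ad}_{a})^{k}(D)=[[\cdots[[D,a],a]\cdots],a]$ ($k$ copies of $a$), applying $\mathbbm{P}$ and using $\mathbbm{P}D=0$ yields $\mathbbm{P}F(a)=\sum_{k\geq 1}\tfrac1{k!}\{a,\ldots,a\}^{D}_{k}=Q(a)$. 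Thus the generating series of the derived brackets is exactly the $\mathscr{A}$-component of the gauge-transformed $D$.

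Now write $F=Q+R$ with $Q:=\mathbbm{P}F\in\mathscr{A}$ and $R:=(1-\mathbbm{P})F\in\ker\mathbbm{P}$. Two facts feed the computation: $[Q,Q]=0$, because $Q\in\mathscr{A}$ and $\mathscr{A}$ is abelian; and $[R,R]\in\ker\mathbbm{P}$, because $\ker\mathbbm{P}$ is a Lie subalgebra. From $0=[F,F]=[Q,Q]+2[Q,R]+[R,R]$ (the cross terms combine, as $F$, hence $Q$ and $R$, have odd degree $=\deg D$), applying $\mathbbm{P}$ gives $\mathbbm{P}[Q,R]=0$. Next, differentiating $F$ at $a$ in a constant direction $b\in\mathscr{A}$ and using that $\mathrm{ad}_{b}$ commutes with $e^{-\mathrm{ad}_{a}}$ gives $\partial_{b}F(a)=-[b,F(a)]$; applying $\mathbbm{P}$ and using $[b,Q]=0$ this becomes $\partial_{b}Q(a)=-\mathbbm{P}[b,R(a)]$, an expression linear in $b$. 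Substituting the (formal) direction $b:=Q(a)$ — legitimate by linearity in $b$, and of the correct parity for the odd-vector-field formalism — yields $\partial_{Q(a)}Q(a)=-\mathbbm{P}[Q(a),R(a)]=0$ by the previous line. Unwinding the dictionary of the first paragraph, this is precisely the simultaneous vanishing of all the Jacobiators $J^{k}$ of $\{{}\cdot{},\cdots,{}\cdot{}\}^{D}$, so $(\mathscr{A},\{{}\cdot{},\cdots,{}\cdot{}\}^{D}_{k})$ is an $L_{\infty}[1]$-algebra.

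The genuinely delicate point I expect is the bookkeeping that makes the ``formal vector field'' dictionary literally correct: verifying that the expansion of $(-\mathrm{ad}_{a})^{k}(D)$ reproduces the left-nested iterated bracket with the right sign, and — more to the point — that the expansion of $\partial_{Q(a)}Q(a)$ in powers of $a$ (equivalently in $t$) reproduces each Jacobiator $J^{n}$ with exactly the Koszul signs $\alpha(\sigma,\boldsymbol{v})$ and combinatorial coefficients occurring in its definition. All the structural input of the theorem — $\mathscr{A}$ abelian, $\ker\mathbbm{P}$ a subalgebra, $D$ square-zero and annihilated by $\mathbbm{P}$ — is used exactly once, in the two short steps ``$\mathbbm{P}[Q,R]=0$'' and ``$\partial_{b}Q=-\mathbbm{P}[b,R]$''; everything else is formal manipulation inside a DG Lie algebra and a translation of conventions.
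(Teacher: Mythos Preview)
The paper does not give its own proof of this theorem: it is stated with attribution to Voronov \cite{v05} and immediately used as a black box. So there is nothing in the paper itself to compare your argument against.

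That said, your argument is essentially correct and is a clean conceptual proof. The decomposition $F=Q+R$ with $Q=\mathbbm{P}F\in\mathscr{A}$ and $R\in\ker\mathbbm{P}$, together with $[Q,Q]=0$ (abelianness of $\mathscr{A}$), $\mathbbm{P}[R,R]=0$ (closure of $\ker\mathbbm{P}$ under the bracket), and $[F,F]=0$ (the automorphism $e^{-\mathrm{ad}_{a}}$ applied to $[D,D]=0$), indeed forces $\mathbbm{P}[Q,R]=0$. The differentiation step $\partial_{b}Q(a)=-\mathbbm{P}[b,R(a)]$ is likewise correct, and the substitution $b=Q(a)$ is legitimate since the right-hand side is genuinely $K$-linear in $b$. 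The translation $\partial_{Q(a)}Q(a)=\sum_{n}\tfrac{1}{n!}J^{n}(a,\ldots,a)$ is a straightforward expansion, and polarization in characteristic zero recovers the full symmetric Jacobiators from their values on equal even arguments --- exactly the trick the paper itself uses throughout. One cosmetic point: the theorem as printed says the brackets give $\mathscr{V}$ an $L_{\infty}[1]$-structure, but of course the structure lives on $\mathscr{A}$; you correctly work there.
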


Now, let $A$ be a graded algebra and $\mathcal{Q}$ a projective and finitely
generated $A$-module. As in Section \ref{SHDGLR}, consider the graded algebra
$\mathcal{A}:=\mathrm{Sym}_{A}(\mathcal{Q},A)$. Let $\mathrm{Der}\mathcal{A}$
be the graded Lie algebra of derivations of $\mathcal{A}$, and let $%
\mathscr{V}%
:=\mathrm{Der}\mathcal{A}\oplus\mathcal{A}$ be the graded Lie algebra of first
order differential operators in $\mathcal{A}$. The brackets in $%
\mathscr{V}%
$ are
\[
\lbrack(\Delta,\omega),(\nabla,\rho)]=([\Delta,\nabla],\Delta\rho
-(-)^{\omega\nabla}\nabla\omega),\quad\Delta,\nabla\in\mathrm{Der}%
\mathcal{A},\quad\omega,\rho\in\mathcal{A}.
\]
Notice that $\mathcal{Q}\oplus A$ embeds into $%
\mathscr{V}%
$. Namely, consider the linear map
\[
\mathbbm{i}:\mathcal{Q}\oplus A\ni(q,a)\longmapsto(\mathbbm{i}_{q}%
,\mathbbm{i}_{a})\in%
\mathscr{V}%
,
\]
where $\mathbbm{i}_{a}:=-a$,
and $\mathbbm{i}_{q}$ is defined by%
\[
\mathbbm{i}_{q}\omega:=-(-)^{q\omega}\omega(q,\cdot,\cdots,\cdot),\quad
\omega\in\mathcal{A}.
\]
The signs are chosen to simplify formulas below. The image $%
\mathscr{A}%
$ of $\mathbbm{i}$ is clearly an Abelian subalgebra in $%
\mathscr{V}%
$. There is a canonical projection
\[
\mathbbm{P}:%
\mathscr{V}%
\ni(\Delta,\omega)\longmapsto(\mathbbm{P}_{\mathcal{Q}}\Delta,\mathbbm{P}_{A}%
\omega)\in\mathcal{Q}\oplus A,
\]
where $\mathbbm{P}_{A}\omega:=-p\omega$, $p:\mathcal{A}\longrightarrow A$
being the natural projection, and $\mathbbm{P}_{\mathcal{Q}}$ is implicitly
defined by the formula
\[
\omega(\mathbbm{P}_{\mathcal{Q}}\Delta):=-(-)^{\Delta\omega}p\Delta\omega\in
A,\quad\omega\in\mathcal{Q}^{\ast}.
\]
Notice that the expression $(-)^{\Delta\omega}p\Delta\omega$ is graded,
$A$-linear in $\omega$ so that $\mathbbm{P}_{\mathcal{Q}}\Delta$ is a well
defined element in $\mathcal{Q}$. Obviously, $\mathbbm{P}$ is a left inverse
of $\mathbbm{i}$. Finally, it is easy to see that $\ker\mathbbm{P}\subset$ $%
\mathscr{V}%
$ is a subalgebra as well, so that, $(%
\mathscr{A}%
,\mathbbm{P},D)$ \emph{are }$V$\emph{-data in }$%
\mathscr{V}%
$\emph{ for all homological derivations }$D\in\mathrm{Der}\mathcal{A}$. In
particular, a homological derivation $D$ in $\mathcal{A}$ determines an
$L_{\infty}[1]$-algebra structure $\mathscr{L}^{D}:=\{\{\cdot,\cdots
,\cdot\}_{k}^{D},\ k\in\mathbb{N}\}$ in $A\oplus\mathcal{Q}$ via Theorem
\ref{TheorV}. On the other hand, a homological derivation in $\mathcal{A}$
determines an $L_{\infty}[1]$-algebra structure $\mathscr{L}^{\oplus
}:=\{\{\cdot,\cdots,\cdot\}_{k}^{\oplus},\ k\in\mathbb{N}\}$ in $A\oplus
\mathcal{Q}$ also via Corollary \ref{Cor} (see Definition \ref{Def2} for
details about the brackets $\{\cdot,\cdots,\cdot\}^{\oplus}$ determined by the
brackets in $\mathcal{Q}$ and the anchors).

\begin{proposition}
Let $D\in\mathrm{Der}\mathcal{A}$ be a homological derivation. The $L_{\infty
}[1]$-algebra structures $\mathscr{L}^{D}$ and $\mathscr{L}^{\oplus}$ coincide.
\end{proposition}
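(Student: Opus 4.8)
The plan is to show that both $L_\infty[1]$-algebra structures on $A\oplus\mathcal{Q}$ are governed by the same homological derivation $D$, and then to identify the two recipes for extracting brackets from it. First I would recall that, by the proof of Theorem \ref{23} (and Corollary \ref{Cor}), the derivation $D=\sum_k d_k$ encodes the brackets and anchors of $(A,\mathcal{Q})$ via the higher Chevalley-Eilenberg formula; so $\mathscr{L}^\oplus$ is, by construction, the $L_\infty[1]$-algebra structure on $A\oplus\mathcal{Q}$ whose brackets $\{\cdot,\ldots,\cdot\}^\oplus$ are the brackets $\{\cdot,\ldots,\cdot\}_k$ on $\mathcal{Q}$, extended by the anchors $\{\cdot,\ldots,\cdot\mid\cdot\}_k$ and by the rule that an output is zero as soon as two entries come from $A$. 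The task is therefore to compute Voronov's derived brackets $\{a_1,\ldots,a_k\}^D=\mathbbm{P}[[\cdots[D,\mathbbm{i}_{a_1}],\ldots],\mathbbm{i}_{a_k}]$ for $a_i\in A\oplus\mathcal{Q}$ and recognize the same operations.

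The key steps, in order: (1) Unwind the iterated bracket in $\mathscr{V}=\mathrm{Der}\,\mathcal{A}\oplus\mathcal{A}$ when all $a_i=q_i\in\mathcal{Q}$. Here $\mathbbm{i}_{q}$ acts on $\mathcal{A}$ essentially as (a sign times) insertion $i_q$, so $[D,\mathbbm{i}_{q}]$ is again a derivation and the nested bracket $[[\cdots[D,\mathbbm{i}_{q_1}],\ldots],\mathbbm{i}_{q_k}]$ is a derivation of $\mathcal{A}$; applying $\mathbbm{P}_{\mathcal{Q}}$ — i.e. pairing with $\omega\in\mathcal{Q}^\ast$ and projecting onto $A$ — reproduces exactly the implicit formula defining $\{q_1,\ldots,q_k\}_k$ from $d_{k-1}$ in the proof of Theorem \ref{23}. (2) Do the same when exactly one entry, say the last, is $a\in A$: then $\mathbbm{i}_a=-a$ is a degree-$0$ element of $\mathcal{A}\subset\mathscr{V}$, the last commutator drops a derivation order, and $\mathbbm{P}_A$ of the result is precisely $(-)^{a(q_1+\cdots+q_{k-1})}(d_{k-1}a)(q_1,\ldots,q_{k-1})=\{q_1,\ldots,q_{k-1}\mid a\}_k$. (3) Check the vanishing clauses: if two or more entries lie in $A$, the corresponding nested commutator in $\mathscr{V}$ is zero (a bracket of two elements of the abelian subalgebra $\mathscr{A}$, after one of them has already been commuted into $\mathcal{A}$), matching the defining property of $\{\cdot,\ldots,\cdot\}^\oplus$. (4) Finally, observe that $\{\cdot\}_1^D=\mathbbm{P}[D,\mathbbm{i}_{(\cdot)}]$ reproduces $d_0$ on both factors, i.e. the differential, consistently with both structures. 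Since two $L_\infty[1]$-structures on the same graded vector space with the same families of brackets (degree by degree, arity by arity) are literally equal, this finishes the proof.

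The main obstacle will be bookkeeping of signs and the precise normalization: the maps $\mathbbm{i}_q$, $\mathbbm{i}_a$, $\mathbbm{P}_{\mathcal{Q}}$, $\mathbbm{P}_A$ carry the auxiliary signs $-(-)^{q\omega}$, $-a$, etc., which were chosen (as the paper notes) precisely so that the derived brackets come out with the conventions of Definition \ref{Def2}; verifying that the Koszul signs produced by moving $D$ past the $\mathbbm{i}_{q_i}$'s in $\mathscr{V}$ match the signs $(-)^{q_i(q_1+\cdots+q_{i-1})}$ and $(-)^{\omega}$ appearing in the implicit definitions of the brackets and anchors in the proof of Theorem \ref{23} is where the real work lies. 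A clean way to organize this, and the one I would adopt, is again the ``equal even arguments'' trick: evaluate everything on $q_1=\cdots=q_k=q$ with $q$ even, which kills all the permutation signs and reduces each identity to a single derivation-order count in $\mathrm{Der}\,\mathcal{A}$, after which graded symmetry extends it to arbitrary arguments. Modulo this sign check — which is routine but not entirely trivial — the identification of $\mathscr{L}^D$ with $\mathscr{L}^\oplus$ is immediate.
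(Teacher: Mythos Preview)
Your proposal is correct and follows essentially the same approach as the paper: both use the equal-even-arguments trick to reduce the computation of $\{q^k\}^D$ and $\{q^{k-1},a\}^D$ to the implicit formulas for the brackets and anchors from the proof of Theorem \ref{23}, and both dispose of the case with two or more entries from $A$ by observing that $[D,a]=Da\in\mathcal{A}$. The paper's actual computations are a bit terser (a few lines each), but your outline matches them step for step, including the order of the sign bookkeeping.
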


\begin{proof}
First of all, since $[D,a]=Da$ for all $a\in A$, it is obvious that
$\{v_{1},\ldots,v_{k}\}_{k}^{D}$ vanishes whenever two entries are from $A$.
This means that $\mathscr{L}^{D}$ is the same as an $L_{\infty}[1]$-algebra
structure on $\mathcal{Q}$, and an $L_{\infty}[1]$-module structure on $A$.
Now, define derivations $d_{k}:\mathcal{A}\longrightarrow\mathcal{A}$ as in
the proof of Corollary \ref{Cor}, and, for $a\in A$ and $q\in\mathcal{Q}$
even, compute,
\begin{align*}
\{q^{k-1},a\}^{D}  &
=\mathbbm{P}[[[\cdots\lbrack D,\mathbbm{i}_{q}]\cdots],\mathbbm{i}_{q}],\mathbbm{i}_{a}%
]\\
&  =-(-)^{k}p\mathbbm{i}_{q}^{k-1}Da\\
&  =(d_{k-1}a)(q^{k-1})\\
&  =\{q^{k-1}|a\}\\
&  =\{q^{k-1},a\}^{\oplus}.
\end{align*}
Similarly, let $\omega\in\mathcal{Q}^{\ast}$ and compute
\begin{align*}
\omega(\{q^{k}\}^{D})  &  =\omega
(\mathbbm{P}[[\cdots\lbrack D,\mathbbm{i}_{q}]\cdots],\mathbbm{i}_{q}])\\
&  =-(-)^{\omega}p[[\cdots\lbrack D,\mathbbm{i}_{q}]\cdots],\mathbbm{i}_{q}%
](\omega)\\
&  =-(-)^{\omega+k}p(\mathbbm{i}_{q}^{k}D\omega-k\mathbbm{i}_{q}%
^{k-1}D\mathbbm{i}_{q}\omega)\\
&  =-(-)^{\omega}((d_{k-1}\omega)(q^{k})-kd_{k-1}(\omega(q))(q^{k-1}))\\
&  =\omega(\{q^{k}\})\\
&  =\omega(\{q^{k}\}^{\oplus}).
\end{align*}

\end{proof}

Notice that when $\mathcal{Q}$ is as in Section \ref{SHLRF} and $D$ is the de
Rham differential in $\mathcal{A}=\Lambda(M)$, then the $V$-data
$(\mathscr{A},\mathbbm{P},D)$ in $\mathscr{V}$ are precisely those constructed
by Ji \cite{j12} in the case of a foliation.

\section{The Homotopy Lie-Rinehart Algebra of a Foliation via Homotopy
Transfer}

\label{Ap3}

After the publication on arXiv of a preliminary version of this paper, Florian
Sch\"{a}tz suggested to me that the $L_{\infty}[1]$-algebra of a foliation
could be derived from the DG Lie algebra $\mathrm{Der}\overline{\Lambda}$ of
derivations of $\overline{\Lambda}$ via homotopy transfer (see \cite{h10}
about the homotopy trasfer of Lie algebra structures). This is indeed the case
as I briefly discuss in this appendix. I first recall the version of the
homotopy transfer theorem I will refer to.

\begin{theorem}
[Homotopy Transfer Theorem]Let $(L,\Delta,[{}\cdot{},{}\cdot{}])$ be a DG Lie
algebra over a field of $0$ characteristic, and
\[%
\xymatrix{     *{ \quad\ \  \quad(L, \Delta)\ }
\ar@(dl,ul)[]^{h}\
\ar@<0.5ex>[r]^{p} & *{\
(H,\delta)\quad\ \  \ \quad}  \ar@<0.5ex>[l]^{j}}%
\]
contraction data, i.e., i) $p$ and $j$ are cochain maps, ii) $p\circ
j=\mathrm{id}_{H}$, and iii) $\mathrm{id}_{L}-j\circ p=\Delta\circ
h+h\circ\delta$. Then there is a natural $L_{\infty}$-algebra structure
$\mathscr{L}$ on $(H,\delta)$.

\end{theorem}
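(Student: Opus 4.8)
The plan is to prove this by the classical coalgebra reformulation of $L_\infty$-structures together with the Basic Perturbation Lemma (BPL), reading off the familiar sum-over-trees formula as a by-product. Recall from Section~\ref{SHS} that an $L_\infty$-structure on a graded vector space $V$ is equivalent to an $L_\infty[1]$-structure on $V[1]$, which in turn is the same datum as a degree $+1$, square-zero coderivation of the reduced cofree cocommutative coalgebra $S^c(V[1])=\bigoplus_{k\ge 1}(V[1])^{\odot k}$, the $k$-ary bracket being the $k$-to-$1$ component of the coderivation. Thus the given DG Lie algebra $(L,\Delta,[{}\cdot{},{}\cdot{}])$ corresponds to a codifferential $Q_L=D_1+D_2$ on $S^c(L[1])$, where $D_1$ is the coderivation extending the suspension of $\Delta$ and $D_2$ the one extending the suspension of $[{}\cdot{},{}\cdot{}]$ (here $Q_L^2=0$ encodes $\Delta^2=0$, the Leibniz rule and the Jacobi identity); likewise $(H,\delta)$ gives the codifferential $d_1$ on $S^c(H[1])$. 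A transferred $L_\infty$-structure on $H$ is exactly a square-zero coderivation of $S^c(H[1])$ whose $1$-to-$1$ component is $d_1$.

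The first step is to lift the contraction $(L,\Delta)\rightleftarrows(H,\delta)$ to a contraction of cofree coalgebras. Since $p$ and $j$ are cochain maps they extend uniquely to morphisms of DG coalgebras $P\colon S^c(L[1])\to S^c(H[1])$ and $J\colon S^c(H[1])\to S^c(L[1])$ (with respect to $D_1$ and $d_1$), and $PJ=\mathrm{id}$ follows from $pj=\mathrm{id}$ by cofreeness. The homotopy $h$ does not extend to a coderivation-homotopy directly; here I would invoke the Gugenheim--Lambe--Stasheff tensor trick to produce an explicit homotopy $\mathbb{H}$ on $S^c(L[1])$ with $\mathrm{id}-JP=D_1\mathbb{H}+\mathbb{H}D_1$, obtained by symmetrising, on each $(L[1])^{\odot n}$, expressions built from $jp$, $h$ and identities, and then correcting it so that the side conditions $P\mathbb{H}=0$, $\mathbb{H}J=0$, $\mathbb{H}^2=0$ hold. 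Tracking the Koszul signs of the symmetric coalgebra through all of these operators is the step I expect to be the main obstacle; everything after it is essentially automatic.

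The second step regards $D_2$ as a perturbation of $D_1$ in the sense of the BPL: $(D_1+D_2)^2=Q_L^2=0$, and because $D_2$ sends $(L[1])^{\odot n}$ to $(L[1])^{\odot(n-1)}$ while $\mathbb{H}$ does not increase word length, the operator $\mathbb{H}D_2$ is nilpotent on each symmetric power, so $\sum_{n\ge 0}(\mathbb{H}D_2)^n$ converges termwise. The coalgebra version of the BPL (it is precisely here that one needs $\mathbb{H}$ to be the tensor-trick homotopy rather than an arbitrary one, so that the output is still a coderivation) then yields a perturbed contraction whose perturbed differential on $S^c(H[1])$ is
\[
d_1 \;+\; P\,D_2\Bigl(\textstyle\sum_{n\ge 0}(\mathbb{H}D_2)^n\Bigr)J ,
\]
a degree $+1$, square-zero coderivation of $S^c(H[1])$ with $1$-to-$1$ component $d_1$. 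By the dictionary above this is an $L_\infty$-structure $\mathscr{L}$ on $H$ with $[{}\cdot{}]_1=\delta$, and its naturality is inherited from that of the cofree-coalgebra functor and of the BPL in the contraction data; an alternative derivation, worth remarking, is to realise these transferred brackets as the higher derived brackets of Theorem~\ref{TheorV} for suitable $V$-data inside the convolution DG Lie algebra $\mathrm{Hom}(S^c(H[1]),L)$.

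Finally, expanding $P D_2\bigl(\sum_n(\mathbb{H}D_2)^n\bigr)J$ componentwise rewrites the $k$-ary bracket $[{}\cdot{},\cdots,{}\cdot{}]_k$ on $H$ as a signed sum, over isomorphism classes of rooted binary trees with $k$ leaves, of the composite that decorates each leaf by $j$, each internal edge by $h$, each trivalent vertex by $[{}\cdot{},{}\cdot{}]$ and the root by $p$ (all suitably desuspended), recovering the Kontsevich--Soibelman tree formula and matching the conventions of \cite{h10}. This explicit form is not logically needed for existence; checking the generalized Jacobi identities directly from it amounts to a tree-surgery cancellation argument, which is exactly what the BPL packages for free, so I would not carry it out by hand.
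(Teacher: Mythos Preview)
The paper does not actually give a proof of this theorem: it is quoted as a classical result, with references to \cite{h10} for the Lie case and to \cite{ks01}, \cite{m99} for the explicit tree/inductive formulas, and the author explicitly writes ``I'm not presenting here this description.'' So there is no in-paper proof to compare against.

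That said, your sketch is correct and is precisely the standard argument behind the cited references. The reformulation of $L_\infty$-structures as codifferentials on $S^c(L[1])$, the tensor-trick extension of the contraction data to the symmetric coalgebra, the smallness of the perturbation $D_2$ coming from the word-length filtration, and the application of the Basic Perturbation Lemma to obtain a new codifferential on $S^c(H[1])$ --- this is exactly Huebschmann's \emph{Lie algebra perturbation lemma} \cite{h10}, and the tree expansion you describe at the end is the Kontsevich--Soibelman formula of \cite{ks01}. Your remark that the side conditions on $\mathbb{H}$ are needed so that the perturbed differential remains a coderivation is the one genuinely delicate point, and you flag it appropriately. In short: your proposal is the proof the paper is citing, not an alternative to it.
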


There exists an explicit description of brackets in $(H,\mathscr{L})$ in terms
of the contraction data by means of trees \cite{ks01}, or inductive formulas
(see for instance \cite{m99}, where {inductive formulas for the transfer of an
associative algebra structure are provided}).

I'm not presenting here this description (the interested reader may see
\cite{v12}, where I recall the necessary formulas from \cite{m99} and apply
them to prove the existence of more SH structures associated to a foliation).
Notice, however, that, in a similar way, one can transfer the structure of a
DG Lie module along contraction data, and get an $L_{\infty}$-module.

Now, $\mathrm{Der}\overline{\Lambda}$ possesses the canonical differential
$\Delta:=[\overline{d},{}\cdot{}]$ and $(\mathrm{Der}\overline{\Lambda}%
,\Delta,[{}\cdot{},{}\cdot{}])$ is a DG Lie algebra (sometimes referred to as
the \emph{deformation complex of the Lie algebroid }$\overline{\mathfrak{X}}$
\cite{cm08}). It is known that $(\mathrm{Der}\overline{\Lambda},\Delta)$ is
homotopy equivalent to $(\overline{\Lambda}\otimes\overline{\mathfrak{X}%
},\overline{d})$ \cite{cm08}. However, to my knowledge, there was as yet no
explicit description of contraction data. I provide it in the proof of the next

\begin{proposition}
A distribution $V$ complementary to $C$ determines contraction data
\[%
\xymatrix{     *{ \quad\ \  \quad(\mathrm{Der}\overline{\Lambda}, \Delta)\ }
\ar@(dl,ul)[]+<-3ex,-1ex>;[]+<-3ex,1ex>^-h \ar@<0.5ex>[r]^-{p} & *{\
(\overline{\Lambda}\otimes\mathfrak{X},\overline{d})\quad\ \  \ \quad}
\ar@<0.5ex>[l]^-{j}}%
.
\]

\end{proposition}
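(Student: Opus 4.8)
The goal is to produce explicit contraction data
\[
\xymatrix{     *{ \quad\ \  \quad(\mathrm{Der}\overline{\Lambda}, \Delta)\ }
\ar@(dl,ul)[]+<-3ex,-1ex>;[]+<-3ex,1ex>^-h \ar@<0.5ex>[r]^-{p} & *{\
(\overline{\Lambda}\otimes\overline{\mathfrak{X}},\overline{d})\quad\ \  \ \quad}
\ar@<0.5ex>[l]^-{j}}
\]
out of the transversal distribution $V$. The key observation is that $V$ gives a decomposition of \emph{every} graded derivation of $\overline{\Lambda}$ into its ``insertion'' and ``Lie derivative'' parts, exactly as in the Fr\"olicher--Nijenhuis theorem of Section \ref{FVVF}, but now for the Lie algebroid $\overline{\mathfrak{X}}$ (Remark \ref{RemAlg}). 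Concretely, using the identification $\overline{\Lambda}\otimes C\Lambda\simeq\Lambda(M)$ of Section \ref{GSF}, a derivation $\Delta$ of $\overline{\Lambda}$ can be written uniquely as $\Delta = i_Z + L_Y$ with $Z,Y\in\overline{\Lambda}\otimes\overline{\mathfrak{X}}$, the insertion and Lie derivative being taken with respect to the algebroid $\overline{\mathfrak{X}}$. I would define $j(Z):=L_Z$ (the Lie derivative along $Z$, which is the natural inclusion of transversal vector-field-valued longitudinal forms into derivations), $p(\Delta):=Y$ (the ``Lie derivative part'', extracted by the Fr\"olicher--Nijenhuis decomposition), and $h(\Delta):=\pm Z$ (the ``insertion part'', suitably signed and degree-shifted).

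First I would verify that $j$ and $p$ are cochain maps. Since $\Delta=[\overline{d},{}\cdot{}]$ on $\mathrm{Der}\overline{\Lambda}$ and the differential on $\overline{\Lambda}\otimes\overline{\mathfrak{X}}$ is $\overline{d}$, the identity $[\overline{d},L_Z]=L_{\overline{d}Z}$ (the algebroid analogue of $[d,L_Z]=L_{[\![d\text{-part}\ldots]\!]}$, here simply that $L$ commutes with $d$) gives $\Delta\circ j = j\circ\overline{d}$. For $p$, I would use the decomposition $[\overline{d},i_Z+L_Y]=L_Z + i_{\overline{d}Z} + L_{\overline{d}Y}$, reading off that the Lie-derivative part of $\Delta(\Delta')$ is $\overline{d}Y = \overline{d}(p(\Delta'))$ plus the contribution $Z = h(\Delta')$ coming from $[i_Z,\overline{d}]=L_Z$; organizing the signs correctly yields $p\circ\Delta = \overline{d}\circ p$. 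Next, $p\circ j=\mathrm{id}$ is immediate: the Lie-derivative part of $L_Z$ is $Z$ itself (its insertion part vanishes). Finally, the homotopy identity $\mathrm{id}-j\circ p = \Delta\circ h + h\circ\Delta$: evaluated on $\Delta'=i_Z+L_Y$, the left side is $i_Z + L_Y - L_Y = i_Z$, while $\Delta\circ h$ contributes (via $[\overline{d},i_Z]=L_Z$ and $[\overline{d},\text{insertion of }\ldots]$) and $h\circ\Delta$ picks out the insertion part of $[\overline{d},\Delta']$; a short sign-bookkeeping using $[\overline{d},i_Z]=L_Z$ and formula (\ref{f5}) shows these combine to exactly $i_Z$.

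The main obstacle is getting the signs and the degree shift in $h$ right. The homotopy $h$ must have degree $-1$, whereas the natural map ``take the insertion part'' $\Delta=i_Z+L_Y\mapsto Z$ raises the form-degree interpretation by one (if $\Delta$ has degree $\ell$ then $Z\in\overline{\Lambda}^{\ell+1}\otimes\overline{\mathfrak{X}}$), so $h$ lands in $\overline{\Lambda}\otimes\overline{\mathfrak{X}}$ with a shift, and one must check carefully that $h^2=0$ and $h\circ j=0$, $p\circ h=0$ (the side conditions for a genuine deformation retract, which one wants for the cleanest transferred structure). The Koszul signs propagating through $[\overline{d},i_Z+L_Y]=L_Z+i_{\overline{d}Z}+L_{\overline{d}Y}$ and through the identity $[i_X,L_Y]=L_{i_XY}-(-)^Y i_{[\![X,Y]\!]}$ must be tracked against the sign conventions fixed in Sections \ref{SHS} and \ref{FVVF}; I expect this to be the only delicate point, and I would handle it by working throughout with the $\overline{\mathfrak{X}}$-analogues of formulas (\ref{f4})--(\ref{f5}) from Remark \ref{RemAlg}, evaluating on homogeneous elements and, where convenient, in adapted local coordinates $\ldots,x^i,\ldots,u^\alpha,\ldots$ to cross-check. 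Once the contraction data are in place, the Homotopy Transfer Theorem (and its module version) applies verbatim, and a direct comparison of the low tree-level formulas with the brackets computed in Section \ref{SHLRF} identifies the transferred structure with the $LR_\infty[1]$-algebra of the foliation.
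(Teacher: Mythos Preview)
Your proposal has a genuine gap: the Fr\"olicher--Nijenhuis decomposition you invoke does not exist in the form you claim. The graded algebra $\overline{\Lambda}$ is the Chevalley--Eilenberg algebra of the Lie algebroid $C\mathfrak{X}$, so the FN calculus of Remark \ref{RemAlg} applies with $Z,Y\in\overline{\Lambda}\otimes C\mathfrak{X}$, \emph{not} $\overline{\Lambda}\otimes\overline{\mathfrak{X}}$. Indeed, $\overline{\mathfrak{X}}$ is the normal bundle of the foliation and carries no Lie algebroid structure in general (via $V$ it identifies with a typically non-involutive distribution), so Remark \ref{RemAlg} simply does not apply to it. Concretely, for $Z\in\overline{\Lambda}\otimes\overline{\mathfrak{X}}$ the natural insertion $i_Z$ on $\overline{\Lambda}$ vanishes identically (a transversal vector inserted into a longitudinal form), so your decomposition $\Delta=i_Z+L_Y$ collapses to $\Delta=L_Y$, which would force $j$ to be onto --- it is not. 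Conversely, the genuine FN decomposition for the algebroid $C$ only produces derivations whose symbol lies in $\overline{\Lambda}\otimes C\mathfrak{X}$, and hence misses precisely the derivations $j(W)$ with transversal symbol. A second, related error: the homotopy $h$ must be an endomorphism of $\mathrm{Der}\overline{\Lambda}$, not a map into $\overline{\Lambda}\otimes\overline{\mathfrak{X}}$.

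The paper's construction uses the splitting differently. The projection $p(\mathcal{D}):=\overline{\mathcal{D}|_{C^{\infty}(M)}}$ picks out the $\overline{\mathfrak{X}}$-component of the symbol of $\mathcal{D}$, and $j(Z)(\lambda):=\overline{L_Z\lambda}$ realizes this transversal part as a derivation. The homotopy $h(\mathcal{D})\in\mathrm{Der}\overline{\Lambda}$ is then the insertion-type derivation (for the algebroid $C$) determined on generators by $h(\mathcal{D})(f)=0$ and $h(\mathcal{D})(\overline{d}f)=(-)^{\mathcal{D}}(\mathcal{D}-jp\mathcal{D})(f)$; that is, $h$ is built from the \emph{longitudinal} ($C\mathfrak{X}$-) part of the symbol, not the transversal one. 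In short: the transversal part of the symbol gives $p$, while the longitudinal part gives $h$ via the FN calculus of $C$. Your scheme swaps the roles of $C\mathfrak{X}$ and $\overline{\mathfrak{X}}$, and this is why the computations for $p\circ\Delta=\overline{d}\circ p$ and the homotopy identity do not close.
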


\begin{proof}
For $Z\in\overline{\Lambda}\otimes\overline{\mathfrak{X}}$, and $\lambda
\in\overline{\Lambda}$ put
\[
j(Z)(\lambda):=\overline{L_{Z}\lambda}.
\]
Then $j(Z)\in\mathrm{Der}\overline{\Lambda}$. For $\mathcal{D}\in
\mathrm{Der}\overline{\Lambda}$, put $p(\mathcal{D}):=\overline{\mathcal{D}%
|_{C^{\infty}(M)}}\in\overline{\Lambda}\otimes\overline{\mathfrak{X}}$.
Finally, let $h(\mathcal{D})\in\mathrm{Der}\overline{\Lambda}$ be defined on
generators $f$, $\overline{d}f$ by
\begin{align*}
h(\mathcal{D})(f)  &  :=0\\
h(\mathcal{D})(\overline{d}f)  &  :=(-)^{\mathcal{D}}(\mathcal{D}%
-jp\mathcal{D})f,
\end{align*}
It is easy to see, using, for instance, local coordinates, that $h(\mathcal{D}%
)$ is well defined, and $j,p,h$ are actually contraction data.
\end{proof}

As an immediate corollary of the Homotopy Transfer Theorem and the above
proposition, there is an $L_{\infty}[1]$-algebra structure on $\overline
{\Lambda}\otimes\overline{\mathfrak{X}}[1]$. It is easy to see
that such $L_{\infty}[1]$-algebra actually coincides with the one described in
Section \ref{SHLRF}. Notice that the $L_{\infty}[1]$-module structure on
$\overline{\Lambda}$ can be obtained from homotopy transfer as well.
%

\section{Alternative Formulas for Binary Operations}

\label{Ap4}

Let $(A,\mathcal{Q})=(\overline{\Lambda},\overline{\Lambda}\otimes\overline{\mathfrak{X}}[1])$ denote
the $LR_{\infty}[1]$-algebra of a foliation. In this appendix I present
alternative formulas for the binary operations in $\mathcal{Q}$. This is
useful for some purposes, e.g., proving the homotopy transfer and the derived
bracket \cite{j12} origins of $\mathcal{Q}$.

\begin{proposition}
Let $Z_{1},Z_{2}\in\mathcal{Q}$. Then
\[
\{Z_{1},Z_{2}\}=-(-)^{Z_{1}}\overline{[\![Z_{1},Z_{2}]\!]}.
\]

\end{proposition}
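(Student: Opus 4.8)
The statement to prove is
\[
\{Z_1,Z_2\} = -(-)^{Z_1}\,\overline{[\![Z_1,Z_2]\!]},
\]
for $Z_1,Z_2\in\mathcal{Q}=\overline{\Lambda}\otimes\overline{\mathfrak{X}}[1]$, where $\{\cdot,\cdot\}$ is the binary bracket of the $LR_\infty[1]$-algebra of the foliation. The natural approach is to start from the formula already established for $\{Z_1,Z_2\}$ in the main theorem of Section \ref{SHLRF}, namely
\[
\{Z_1,Z_2\} = -(-)^{Z_1}[\![Z_1,Z_2]\!] + [[R,Z_1]_{\mathrm{nr}},Z_2]_{\mathrm{nr}},
\]
and show that applying the projection $\overline{\;\cdot\;}:\Lambda(M)\otimes\mathfrak{X}(M)\to\overline{\Lambda}\otimes\overline{\mathfrak{X}}$ to the right-hand side produces exactly $-(-)^{Z_1}\overline{[\![Z_1,Z_2]\!]}$. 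Two things have to be checked for this to make sense and to conclude. First, that $\{Z_1,Z_2\}$, a priori an element of $\mathcal{Q}$, is left unchanged by $\overline{\;\cdot\;}$ — this is automatic since $\{Z_1,Z_2\}\in\overline{\Lambda}\otimes\overline{\mathfrak{X}}$ already and $\overline{\;\cdot\;}$ restricts to the identity there. Second, and this is the real content, that the correction term vanishes under the bar:
\[
\overline{[[R,Z_1]_{\mathrm{nr}},Z_2]_{\mathrm{nr}}} = 0.
\]

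To prove this vanishing I would argue as follows. Recall from Section \ref{GSF} that $R\in C\Lambda^2\otimes C\mathfrak{X}$, i.e.\ $R$ is a $C\mathfrak{X}$-valued form whose coefficients lie in $C\Lambda$. The key observation is that the Nijenhuis–Richardson bracket $[Z,Y]_{\mathrm{nr}} = i_Z Y - (-)^{(Z-1)(Y-1)} i_Y Z$ only involves insertion operators, and that inserting a $C\mathfrak{X}$-valued form (like $R$, or like $[R,Z_1]_{\mathrm{nr}}$, which again has its vector-field part in $C\mathfrak{X}$ by formula (\ref{f6}) since the $i_R$-type operations keep the vector part in $C\mathfrak{X}$) into $Z_1$ resp.\ $Z_2\in\overline{\Lambda}\otimes\overline{\mathfrak{X}}$ produces a form-valued vector field whose \emph{vector-field part still lies in $C\mathfrak{X}$} — it is obtained by contracting the $\overline{\mathfrak{X}}$-valued form $Z_i$ against the $C\mathfrak{X}$-valued form, and the result is $C\mathfrak{X}$-valued. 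Since $\overline{\mathfrak{X}} = \mathfrak{X}(M)/C\mathfrak{X}$, any element of $\Lambda(M)\otimes C\mathfrak{X}$ maps to zero under $\mathfrak{X}(M)\to\overline{\mathfrak{X}}$, hence $\overline{[[R,Z_1]_{\mathrm{nr}},Z_2]_{\mathrm{nr}}}=0$. This is exactly the mechanism already used in the paper in the Corollary to Lemma \ref{Lem} (where $\overline{[R,Z]_{\mathrm{nr}}}=0$ is asserted for the same reason).

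The main obstacle — and the only place genuine care is needed — is verifying the claim that $[R,Z_1]_{\mathrm{nr}}$, and then its bracket with $Z_2$, has vector-field part in $C\mathfrak{X}$. Rather than tracking the full Frölicher–Nijenhuis structure, I would make this explicit in the adapted local coordinates $x^i,u^\alpha$ of Section \ref{HAF}: there $R = R^i_{\alpha\beta}\,du^\alpha\, du^\beta\otimes\partial_i$ with $C\mathfrak{X}$-valued part $\partial_i$, and $Z_2 = Z^\alpha\otimes V_\alpha$ with $V_\alpha$ the chosen transversal generators; one computes that both $i_R Z_1$ and $i_{Z_1}R$ are combinations with vector parts among the $\partial_i$, hence lie in $C\Lambda\otimes C\mathfrak{X}$, and similarly after bracketing against $Z_2$. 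Alternatively — and more cleanly — one can note that this proposition is an immediate consequence of the Corollary to Lemma \ref{Lem}: indeed $\overline{d}Z = \overline{[\![P^C,Z]\!]}$ together with $[\![P^C,Z]\!] = \overline{d}Z + [R,Z]_{\mathrm{nr}}$ (from Lemma \ref{Lem}) shows $\overline{[R,Z]_{\mathrm{nr}}}=0$ for all $Z\in\overline{\Lambda}\otimes\overline{\mathfrak{X}}$, and then $\overline{[[R,Z_1]_{\mathrm{nr}},Z_2]_{\mathrm{nr}}}=0$ follows by applying this twice (after checking $[R,Z_1]_{\mathrm{nr}}\in\overline{\Lambda}\otimes\overline{\mathfrak{X}}$, which is where a small local computation or a $\overline{\;\cdot\;}$-naturality argument enters). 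I would present the local-coordinate verification as the cleanest self-contained route, since it simultaneously handles the bidegree bookkeeping that guarantees $[R,Z_1]_{\mathrm{nr}}$ stays in the relevant summand.
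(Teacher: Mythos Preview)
Your overall strategy---apply the projection $\overline{\;\cdot\;}$ to both sides of the known formula $\{Z_1,Z_2\}=-(-)^{Z_1}[\![Z_1,Z_2]\!]+[[R,Z_1]_{\mathrm{nr}},Z_2]_{\mathrm{nr}}$ and show the correction term projects to zero---is sound, and it is genuinely different from the paper's route. The paper instead computes $\overline{[\![Z_1,Z_2]\!]}=[[\![Z_1,Z_2]\!],P^V]_{\mathrm{nr}}$ directly from the splitting projectors and then expands this using Formula~(\ref{f1}), arriving at $[\![Z_1,Z_2]\!]-(-)^{Z_1}[[R,Z_1]_{\mathrm{nr}},Z_2]_{\mathrm{nr}}$, which immediately matches $-(-)^{Z_1}\{Z_1,Z_2\}$.

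However, your justification for the key vanishing $\overline{[[R,Z_1]_{\mathrm{nr}},Z_2]_{\mathrm{nr}}}=0$ is incorrect as stated. You claim that $[R,Z_1]_{\mathrm{nr}}$ has its vector-field part in $C\mathfrak{X}$ and that inserting a $C\mathfrak{X}$-valued form into $Z_2$ keeps the vector part in $C\mathfrak{X}$. Both assertions are false. Recall that by the paper's convention $i_W$ on $\Lambda(M)\otimes\mathfrak{X}(M)$ means $i_W\otimes\mathrm{id}$: the vector part of $i_W Z$ is the vector part of $Z$, not of $W$. Hence $i_R Z_1$ has vector part in $\overline{\mathfrak{X}}$ (the vector part of $Z_1$), so $[R,Z_1]_{\mathrm{nr}}=i_RZ_1-(-)^{\cdots}i_{Z_1}R$ has mixed vector part. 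Likewise, $[R,Z_1]_{\mathrm{nr}}$ does not lie in $\overline{\Lambda}\otimes\overline{\mathfrak{X}}$ (it has no component there at all), so your ``apply $\overline{[R,\cdot]_{\mathrm{nr}}}=0$ twice'' alternative cannot work as written.

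The vanishing you want does hold, but for a different reason: a bidegree count shows $i_RZ_1\in\overline{\Lambda}\otimes C\Lambda^2\otimes\overline{\mathfrak{X}}$ and $i_{Z_1}R\in\overline{\Lambda}\otimes C\Lambda^1\otimes C\mathfrak{X}$, and then a second application of the Nijenhuis--Richardson bracket with $Z_2$ (using $i_X\lambda=0$ for $X\in\overline{\mathfrak{X}}$, $\lambda\in\overline{\Lambda}$) lands $[[R,Z_1]_{\mathrm{nr}},Z_2]_{\mathrm{nr}}$ in $(\overline{\Lambda}\otimes C\Lambda^1\otimes\overline{\mathfrak{X}})\oplus(\overline{\Lambda}\otimes C\mathfrak{X})$. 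Both summands project to zero in $\overline{\Lambda}\otimes\overline{\mathfrak{X}}$: the first because of the $C\Lambda^1$ factor in the form part, the second because of the $C\mathfrak{X}$ vector part. A careful local-coordinate computation would indeed reveal this, so your proposed endgame could be salvaged---but the heuristic you gave for it needs to be replaced by this bidegree argument.
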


\begin{proof}
Let $X\in\overline{\mathfrak{X}}$ and $\lambda\in\overline{\Lambda}$. Then
$i_{X}\lambda=0$ and $L_{X}\lambda=\lambda^{\prime}+\lambda^{\prime\prime}$,
with $\lambda^{\prime}\in\overline{\Lambda}$ and $\lambda^{\prime\prime}%
\in\overline{\Lambda}\otimes C\Lambda^{1}$. Therefore, in view of Formula
(\ref{f2}),
\[
\lbrack\![Z{}_{1},Z{}_{2}]\!]=\overline{[\![Z{}_{1},Z{}_{2}]\!]}+Z^{\prime
}+Z^{\prime\prime}%
\]
with $Z^{\prime}\in\overline{\Lambda}\otimes C\mathfrak{X}$, $Z^{\prime\prime
}\in C\Lambda^{1}\otimes\overline{\Lambda}\otimes\overline{\mathfrak{X}}$. It
follows that
\[
Z^{\prime}=i_{[\![Z{}_{1},Z{}_{2}]\!]}P^{C}\text{\quad and\quad}%
Z^{\prime\prime}=i_{P^{V}}[\![Z{}_{1},Z{}_{2}]\!],
\]
so that
\begin{align*}
\overline{\lbrack\![Z{}_{1},Z{}_{2}]\!]}  &  =[\![Z{}_{1},Z{}_{2}%
]\!]-Z^{\prime}-Z^{\prime\prime}\\
&  =i_{[\![Z{}_{1},Z{}_{2}]\!]}\mathbb{I-}i_{[\![Z{}_{1},Z{}_{2}]\!]}%
P^{C}-i_{P^{V}}[\![Z{}_{1},Z{}_{2}]\!]\\
&  =i_{[\![Z{}_{1},Z{}_{2}]\!]}P^{V}-i_{P^{V}}[\![Z{}_{1},Z{}_{2}]\!]\\
&  =[[\![Z{}_{1},Z{}_{2}]\!],P^{V}]_{\mathrm{nr}}.
\end{align*}
Now, it follows from Formula (\ref{f1}) that
\begin{align*}
\lbrack\lbrack\![Z{}_{1},Z{}_{2}]\!],P^{V}]_{\mathrm{nr}}  &  =[\![Z{}%
_{1},[Z{}_{2},P^{V}]_{\mathrm{nr}}]\!]-(-)^{Z_{2}(Z_{1}+1)}[Z{}_{2}%
,[\![Z{}_{1},P^{V}]\!]]_{\mathrm{nr}}\\
&  =[\![Z{}_{1},Z{}_{2}]\!]-(-)^{(Z_{1}+1)(Z_{2}+1)}[Z{}_{2},[\![P^{C},Z{}%
_{1}]\!]]_{\mathrm{nr}}\\
&  =[\![Z{}_{1},Z{}_{2}]\!]-(-)^{(Z_{1}+1)(Z_{2}+1)}[Z{}_{2},\overline{d}%
Z{}_{1}+[R,Z{}_{1}]_{\mathrm{nr}}]_{\mathrm{nr}}\\
&  =[\![Z{}_{1},Z{}_{2}]\!]-(-)^{(Z_{1}+1)(Z_{2}+1)}[Z{}_{2},[R,Z{}%
_{1}]_{\mathrm{nr}}]_{\mathrm{nr}}\\
&  =[\![Z{}_{1},Z{}_{2}]\!]-(-)^{Z_{1}}[[R,Z{}_{1}]_{\mathrm{nr}},Z{}%
_{2}]_{\mathrm{nr}},
\end{align*}
where I also used that $[Z{},Z{}_{1}]_{\mathrm{nr}}=0$ for all $Z{},Z{}_{1}%
\in\mathcal{Q}$.
\end{proof}

\begin{proposition}
Let $Z\in\mathcal{Q}$ and $\lambda\in\overline{\Lambda}$. Then
\[
\{Z|\lambda\}=-(-)^{Z_{1}}\overline{L_{Z}\lambda}.
\]

\end{proposition}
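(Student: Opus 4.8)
The plan is to reduce the statement to the explicit formula for the first anchor obtained in Section~\ref{SHLRF}, namely $\{Z\,|\,\lambda\}=-(-)^{Z}L_{Z}\lambda+i_{[R,Z]_{\mathrm{nr}}}\lambda$, so that the claim becomes the assertion $\overline{L_{Z}\lambda}=L_{Z}\lambda-(-)^{Z}i_{[R,Z]_{\mathrm{nr}}}\lambda$; that is, the entire content of the Proposition is the computation of the ``transversal part'' $L_{Z}\lambda-\overline{L_{Z}\lambda}$ of $L_{Z}\lambda$ under the projection $\Lambda(M)\longrightarrow\overline{\Lambda}$.

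First I would record two elementary facts. Since $Z\in\mathcal{Q}=\overline{\Lambda}\otimes\overline{\mathfrak{X}}[1]$ is built from vector fields transversal to $C$ and $\lambda\in\overline{\Lambda}\subset\Lambda(M)=\overline{\Lambda}\otimes C\Lambda$ has $C\Lambda$-degree zero, one has $i_{Z}\lambda=0$, whence $L_{Z}\lambda=[i_{Z},d]\lambda=i_{Z}(d\lambda)$; and likewise $i_{R}(i_{Z}\lambda)=0$. Next I would insert the decomposition $d=d_{0}+d_{1}+d_{2}$ of Section~\ref{SHLRF}, with $d_{k}$ of bidegree $(k,-k+1)$ (so $d_{0}=\overline{d}$, $d_{1}=d^{V}+2i_{R}$, $d_{2}=-i_{R}$), and use that $i_{Z}$ lowers the $C\Lambda$-degree by one. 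Then $i_{Z}(d_{0}\lambda)=0$ for degree reasons, $i_{Z}(d_{1}\lambda)\in\overline{\Lambda}$, and $i_{Z}(d_{2}\lambda)=-i_{Z}i_{R}\lambda\in\overline{\Lambda}\otimes C\Lambda^{1}$, so that $L_{Z}\lambda=i_{Z}(d_{1}\lambda)-i_{Z}i_{R}\lambda$. Reading off the $C\Lambda$-degree-zero summand gives $\overline{L_{Z}\lambda}=i_{Z}(d_{1}\lambda)$ and $L_{Z}\lambda-\overline{L_{Z}\lambda}=-i_{Z}i_{R}\lambda$.

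To finish, I would rewrite the insertion: since $i_{R}(i_{Z}\lambda)=0$, the identity $[i_{Z},i_{R}]=i_{[Z,R]_{\mathrm{nr}}}$ gives $i_{Z}i_{R}\lambda=i_{[Z,R]_{\mathrm{nr}}}\lambda$, and the graded skew-symmetry of the Nijenhuis--Richardson bracket on $(\Lambda(M)\otimes\mathfrak{X}(M))[1]$ gives $[Z,R]_{\mathrm{nr}}=-(-)^{Z}[R,Z]_{\mathrm{nr}}$. Hence $L_{Z}\lambda-\overline{L_{Z}\lambda}=(-)^{Z}i_{[R,Z]_{\mathrm{nr}}}\lambda$, and multiplying by $-(-)^{Z}$ and comparing with the anchor formula of Section~\ref{SHLRF} yields $-(-)^{Z}\overline{L_{Z}\lambda}=-(-)^{Z}L_{Z}\lambda+i_{[R,Z]_{\mathrm{nr}}}\lambda=\{Z\,|\,\lambda\}$ (so that the $Z_{1}$ in the displayed formula should read $Z$). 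The only point that requires attention is the bidegree bookkeeping, equivalently the longitudinal/transversal splitting of differential forms, that isolates $i_{Z}(d_{1}\lambda)$ as the longitudinal component of $L_{Z}\lambda$; once that is pinned down the argument is immediate and, pleasantly, the sign $-(-)^{Z}$ comes out by itself.
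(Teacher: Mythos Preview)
Your argument is correct and arrives at the same identity $\overline{L_{Z}\lambda}=L_{Z}\lambda-(-)^{Z}i_{[R,Z]_{\mathrm{nr}}}\lambda$ as the paper, but by a genuinely different computation. The paper isolates the transversal piece $\omega':=L_{Z}\lambda-\overline{L_{Z}\lambda}$ by applying the projector $i_{P^{V}}$ and using the Fr\"{o}licher--Nijenhuis identity $[i_{P^{V}},L_{Z}]=L_{i_{P^{V}}Z}+(-)^{Z}i_{[\![Z,P^{V}]\!]}$ together with Lemma~\ref{Lem} (so that $[\![P^{C},Z]\!]$ reduces to $\overline{d}Z+[R,Z]_{\mathrm{nr}}$ and the $\overline{d}Z$ term dies on $\overline{\Lambda}$). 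You instead write $L_{Z}\lambda=i_{Z}d\lambda$, feed in the bigrading decomposition $d=d_{0}+d_{1}+d_{2}$, and simply read off the $C\Lambda$-degree of each summand; the transversal part is then visibly $i_{Z}d_{2}\lambda=-i_{Z}i_{R}\lambda$, which you rewrite via $[i_{Z},i_{R}]=i_{[Z,R]_{\mathrm{nr}}}$. Your route avoids both formula~(\ref{f5}) and Lemma~\ref{Lem}, trading them for the explicit knowledge of $d_{2}=-i_{R}$ from Section~\ref{SHLRF}; it is arguably the more transparent path once that decomposition is in hand, while the paper's computation stays entirely within the Fr\"{o}licher--Nijenhuis formalism and so generalizes more readily to Lie pairs. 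Your observation that the exponent $Z_{1}$ in the statement is a typo for $Z$ is also correct.
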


\begin{proof}
In view of Formula (\ref{f4})
\[
L_{Z}\lambda=\overline{L_{Z}\lambda}+\omega^{\prime}%
\]
with $\omega^{\prime}\in\overline{\Lambda}\otimes C\Lambda^{1}$. It follows
that
\begin{align*}
\omega^{\prime}  &  =i_{P^{V}}L_{Z}\lambda\\
&  =[i_{P^{V}},L_{Z}]\lambda\\
&  =-i_{[\![Z,P^{V}]\!]}\lambda\\
&  =(-)^{Z}i_{[\![P^{C},Z]\!]}\lambda\\
&  =(-)^{Z}i_{\overline{d}Z+[R,Z]_{\mathrm{nr}}}\lambda\\
&  =(-)^{Z}i_{[R,Z]_{\mathrm{nr}}}\lambda.
\end{align*}
where I used Formula (\ref{f5}).
\end{proof}

\subsection*{Aknowledgements}

I thank Jim Stasheff for carefully reading earlier versions of this paper. The
present version has been strongly influenced by his numerous comments and
suggestions. I also thank Florian Sch\"{a}tz for suggesting me that the
$LR_{\infty}[1]$-algebra of a foliation could arise from homotopy transfer.
Finally, I thank Marco Zambon for having drawn my attention to the role that
the derived bracket construction could play in the description of homotopy
Lie-Rinehart algebras in terms of multi-differential algebras.

\end{document}